\newtheorem{thm}{}[section]
\newtheorem{theorem}[thm]{Theorem}
\newtheorem{corollary}[thm]{Corollary}
\newtheorem{lemma}[thm]{Lemma}
\newtheorem{proposition}[thm]{Proposition}
\theoremstyle{definition}
\theoremstyle{remark}
\numberwithin{equation}{section}
\newcommand{\abs}[1]{\left\lvert#1\right\rvert}
\newcommand{\norm}[1]{\left\lVert#1\right\rVert}
\newcommand{\enbrace}[1]{\left\lbrace#1\right\rbrace}
\newcommand{\GG}{\ensuremath{\mathcal{G}}}
\newcommand{\Ft}{\ensuremath{\mathcal{F}}}
\newcommand{\Sym}{\ensuremath{\mathbb{S}}}
\newcommand{\HH}{\ensuremath{\mathbb{H}}}
\newcommand{\BI}{\ensuremath{\bm{B}}}
\newcommand{\AI}{\ensuremath{\bm{A}}}
\newcommand{\Ba}{\ensuremath{\mathbb{B}}}
\newcommand{\HB}{\ensuremath{\mathcal{H}}}
\newcommand{\Dy}{\ensuremath{\mathcal{D}}}
\newcommand{\Ind}{\ensuremath{\mathbbm{1}}}
\newcommand{\udf}{\ensuremath{\bm{\varphi}}}
\newcommand{\unc}{\ensuremath{\bm{k}}}
\newcommand{\aunc}{\ensuremath{\bm{\widetilde{k}}}}
\newcommand{\Jt}{\ensuremath{\mathcal{J}}}
\newcommand{\XB}{\ensuremath{\mathcal{X}}}
\newcommand{\YB}{\ensuremath{\mathcal{Y}}}
\newcommand{\UB}{\ensuremath{\mathcal{U}}}
\newcommand{\Nt}{\ensuremath{\mathcal{N}}}
\newcommand{\Ts}{\ensuremath{\mathcal{T}}}
\newcommand{\ts}{\ensuremath{\bm{t}}}
\newcommand{\vv}{\ensuremath{\bm{v}}}
\newcommand{\xx}{\ensuremath{\bm{x}}}
\newcommand{\yy}{\ensuremath{\bm{y}}}
\newcommand{\ee}{\ensuremath{\bm{e}}}
\newcommand{\zz}{\ensuremath{\bm{z}}}
\newcommand{\XX}{\ensuremath{\mathbb{X}}}
\newcommand{\YY}{\ensuremath{\mathbb{Y}}}
\newcommand{\BB}{\ensuremath{\mathcal{B}}}
\newcommand{\Id}{\ensuremath{\mathrm{Id}}}
\newcommand{\NN}{\ensuremath{\mathbb{N}}}
\newcommand{\ZZ}{\ensuremath{\mathbb{Z}}}
\newcommand{\FF}{\ensuremath{\mathbb{F}}}
\newcommand{\UU}{\ensuremath{\mathbb{U}}}
\newcommand{\LL}{\ensuremath{\mathcal{L}}}
\newcommand{\EB}{\ensuremath{\mathcal{E}}}
\newcommand{\supp}{\operatorname{supp}}
\newcommand\subsetsim{\mathrel{%
\ooalign{\raise0.2ex\hbox{$\subset$}\cr\hidewidth\raise-0.8ex\hbox{\scalebox{0.9}{$\sim$}}\hidewidth\cr}}}
\begin{document}
\title[Greedy-type bases in Tsirelson's space]{The structure of greedy-type bases in Tsirelson's space and its convexifications}
\author[F. Albiac]{Fernando Albiac}
\address{Department of Mathematics, Statistics and Computer Sciences, and InaMat$^2$\\ Universidad P\'ublica de Navarra\\
Pamplona 31006\\ Spain}
\email{fernando.albiac@unavarra.es}

\author[J. L. Ansorena]{Jos\'e L. Ansorena}
\address{Department of Mathematics and Computer Sciences\\
Universidad de La Rioja\\
Logro\~no 26004\\ Spain}
\email{joseluis.ansorena@unirioja.es}
\subjclass[2010]{46B15, 46B20, 46B42, 46B45, 46A16, 46A35, 46A40, 46A45}
\keywords{Tsirelson's space, conditional basis, greedy basis, almost greedy basis}
\begin{abstract}
Tsirelson's space $\Ts$ made its appearance in Banach space theory in 1974 soon to become one of the most significant counterexamples in the theory. Its structure broke the ideal pattern that analysts had conceived for a generic Banach space, thus giving rise to the era of pathological examples. Since then, many authors have contributed to the  study of different aspects of this special space with an eye on better understanding its idiosyncrasies. In this paper we are concerned with the greedy-type basis structure of $\Ts$, a subject that had not been previously explored in the literature. More specifically, we show that Tsirelson's space and its convexifications $\Ts^{(p)}$ for $0<p<\infty$ have uncountably many non-equivalent greedy bases. We also investigate the conditional basis structure of spaces $\Ts^{(p)}$ in the range of $0<p<\infty$ and prove that they have uncountably many non-equivalent conditional almost greedy bases.
\end{abstract}
\thanks{The first-named author acknowledges the support of the Spanish Ministry for Science and Innovation under Grant PID2019-107701GB-I00 for \emph{Operators, lattices, and structure of Banach spaces}. The research of both authors was supported by the Spanish Ministry for Science, Innovation, and Universities under Grant PGC2018-095366-B-I00 for \emph{An\'alisis Vectorial, Multilineal y Aproximaci\'on}.}
\maketitle
\section{Introduction}\noindent
Since the appearance of Banach's dissertation in 1920 and the subsequent dissemination of his ideas twelve years later through the publication of \emph{Th\'eorie des op\'erations lin\'eares} \cite{Banach1932}, Banach space theory evolved during the 1960s to produce a likely picture of the structure of Banach spaces in which the spaces $\ell_{p}$ for $1\le p<\infty$ and $c_{0}$ were considered potential building blocks of any space. A question then arose as to whether every Banach space must contain a copy of one of these spaces. It was quite surprising when in 1974, Tsirelson \cite{Tsirelson1974} produced the first example of a Banach space not containing some $\ell_{p}$ ($1\le p<\infty$) or $c_{0}$.

The space nowadays known as \emph{Tsirelson's space}, herein denoted by $\Ts$, was introduced by Figiel and Johnson in 1974 \cite{FigielJohnson1974} to prove the existence of a superreflexive Banach space with an unconditional basis which does not contain a copy of $\ell_p$ for any value of $p\in[1,\infty)$ or $c_{0}$. As a matter of fact, the dual space $\Ts^*$ of $\Ts$ is the original space constructed by Tsirelson.

Despite its apparently strange construction, which uses an iteration method to define the norm, Tsirelson's space has turned out to be a remarkable springboard for further research. It has been studied in many different settings because it is an important source of examples and counterexamples in classical Banach space theory. For instance, the $2$-convexified $\Ts^{(2)}$ of Tsirelson's space, also introduced in \cite{FigielJohnson1974}, played a fundamental role in the study of the unconditional structure of Banach spaces when Bourgain et al.\ discovered in 1984 that it had a unique unconditional basis up to equivalence and permutation \cite{BCLT1985}, shattering thus all hopes of attaining a satisfactory classification of the Banach spaces with that property.

In this note we will investigate Tsirelson's space and its convexified analogues in the context of approximation theory. Although this is the first time that Tsirelson's space is studied in this context, where the problems that we consider have an extensive background, the questions we address are also of interest to the Banach space specialist since they provide an accurate description of the rich structure of conditional almost greedy bases in $\Ts$. By using a combination of abstract methods from Banach space theory together with a refinement of techniques from greedy bases we will provide further evidence of the fact that, in spite that Tsirelson's space does not contain copies of $\ell_{1}$, in a certain sense these two spaces 
can be very close to each other. More precisely, the structure of their conditional almost greedy bases follow the same pattern since, as it happens with $\ell_{1}$ (see \cite{AAW2021b}), the space $\Ts$ has uncountably many nonequivalent conditional almost greedy bases whose fundamental function is of the same order as the fundamental function of the canonical basis of $\ell_{1}$.

Throughout this paper we employ the standard notation and terminology commonly used in Banach space theory and approximation theory, as can be found, e.g., in the monographs \cites{AlbiacKalton2016,LinTza1979} or the recent article \cite{AABW2021}. Since by now the definition of Tsirelson's space is classical, we refer to the book of Casazza and Shura \cite{CasShu1989} from 1989 for its construction and elementary properties.

For the convenience of the reader and to fix the notation, we next gather together the most heavily used terminology.

A sequence space on a countable set $\Jt$ will be a quasi-Banach space for which the unit vector system $\EB_\Jt=(\ee_j)_{j\in\Jt}$ is a $1$-unconditional basis. If $\LL$ is a sequence space indexed on $\Jt$, then its square $\LL^2=\LL\oplus\LL$ is a sequence space naturally indexed on $\enbrace{1,2}\times \Jt$. We will say that a sequence space $\LL_1$ indexed on a set $\Jt_1$ is lattice isomorphic to a sequence space $\LL_2$ indexed on set $\Jt_2$ via a bijection $\pi\colon\Jt_2\to\Jt_1$ if the map $(a_n)_{n\in\Jt_1} \mapsto (a_{\pi(n)})_{n\in\Jt_2}$ restricts to a lattice isomorphism from $\LL_1$ onto $\LL_2$. Note that $\LL_1$ are $\LL_2$ are lattice isomorphic if and only if their unit vector systems $\EB_{\Jt_1}$ and $\EB_{\Jt_2}$ are permutatively equivalent.

The \emph{restriction} of a sequence space $\LL$ to a set $\Jt_0\subseteq\Jt$ is the sequence space consisting of all $f\in\FF^{\Jt_0}$ such that $\tilde{f}\in\LL$, where $\tilde{f}\in\FF^\Jt$ is the obvious extension of $f$.

In the most general setting, by a \emph{basis} of a Banach (or quasi-Banach) space $\XX$ we mean a norm-bounded countable family $\XB=(\xx_n)_{n\in\Nt}$ that generates the entire space $\XX$, in the sense that
\[
[\xx_n \colon n\in\Nt]=\XX,
\]
and for which there is a (unique) norm-bounded family $\XB^*=(\xx_n^*)_{n\in\Nt}$ in the dual space $\XX^*$ such that $(\xx_{n}, \xx_{n}^*)_{n\in\Nt}$ is a biorthogonal system. The sequence $\XB^*$ will be called the \emph{dual basis} of $\XB$.

For $f\in \XX$ we define the \emph{greedy ordering} for $f$ as the map $\rho\colon \Nt\to\Nt$ such that 
$\enbrace{n\colon \xx_{n}^*(f)\not=0}\subseteq \rho(\Nt)$ and such that if $j<k$ then
\[
\abs{\xx_{\rho(j)}^*(f)}>\abs{\xx_{\rho(k)}^*(f)}\quad \mbox{or}\quad \abs{\xx_{\rho(j)}^*(f)}=\abs{\xx_{\rho(k)}^*(f)}\; \mbox{and}\;\rho(j)<\rho(k).
\]
The \emph{$m$th greedy approximation} of $f$ is given by
\[
\GG_{m}(f)=\sum_{j=1}^{m}\xx_{\rho(j)}^*(f)\xx_{\rho(j)}.
\]
Konyagin and Temlyakov defined the basis $(\xx_n)_{n\in\Nt}$ to be \emph{greedy} if $\GG_{m}(f)$ is essentially the best $m$-term approximation to $f$ using the basis vectors, i.e., if there exists a constant $C\ge 1$ such that for all $f\in \XX$ and all $m\in \Nt$, we have
\begin{equation}\label{defgreedy}
\norm{ f-\GG_{m}(f)}\le C\inf\enbrace{\norm{ f-\sum_{n\in A}\alpha_n \xx_n}\colon \abs{A}=m, \alpha_n\in \FF }.
\end{equation}
Then they showed that greedy bases can be simply characterized as unconditional bases with the additional property of being \emph{democratic}, i.e., for some $\Delta\ge 1$ we have 
\[
\norm{\Ind_A} \le \Delta \norm{\Ind_A}\quad\mbox{whenever } \abs{A}\le \abs{B}.
\]
Here, and throughout the paper, given a basis $\XB=(\xx_n)_{n\in\Nt}$ of $\XX$ and $A\subseteq \Nt$ finite, we put
\[
\Ind_A=\Ind_A[\XB,\XX]=\sum_{n\in A} \xx_n.
\]
Then a basis $\XB$ is democratic if and only if there is a non-decreasing sequence $(s_m)_{m=1}^\infty$ in $(0,\infty)$ such that $\norm{ \Ind_A} \approx s_{\abs{A}}$ for $A\subseteq\Nt$ finite. Such a sequence $(s_m)_{m=1}^\infty$ must be equivalent to the \emph{fundamental function} of $\XB$, given by
\[
\udf[\XB,\XX](m)= \sup_{\abs{A}\le m} \norm{ \Ind_A[\XB,\XX] }, \quad m\in\NN.
\]
A democratic basis $\XB=(\xx_n)_{n\in\Nt}$ (or its fundamental function $\udf$) is said to have the \emph{lower regularity property} (LRP for short) if there are constants $C>0$ and $0<\alpha<1$ such that 
\[
C\udf(mn)\ge m^{\alpha}\udf(n), \quad m,n\in \Nt.
\] In turn, $\XB$ has the \emph{upper regularity property} (URP for short) if there are constants $C>0$ and $0<\beta<1$  such that
\[
\udf(n)\le C\left(\frac{n}{m}\right)^{\beta}\udf(m), \quad m\le n.
\]

In their groundbreaking article \cite{KoTe1999}, Konyagin and Temlyakov defined a basis $(\xx_n)_{n\in\Nt}$ to be \emph{quasi-greedy} if there exists a constant $C\ge 1$ such that $\norm{ \GG_{m}(f)} \le C\norm{f}$ for all $x\in \XX$ and $m\in \Nt$. Subsequently, Wojtaszczyk \cite{Woj2000} proved that these are precisely the bases for which $\lim_{m\to\infty}\GG_{m}(f)=f$ for all $f\in \XX$.

We also recall that a basis $(\xx_n)_{n\in \Nt}$ of a Banach space $\XX$ is \emph{almost greedy}  if there exists a constant $C\ge 1$ such that for all $f\in \XX$ and all $m\in \Nt$ we have
\begin{equation}\label{defalmostgreedy}
\norm{ f-\GG_{m}(f)}\le C\inf\enbrace{\norm{f-\sum_{n\in A}\xx_n^*(f)\xx_n}\colon \abs{A}=m}.
\end{equation}
Comparison with \eqref{defgreedy} shows that this is formally a weaker condition than bein greedy: in \eqref{defgreedy} the infimum is taken over all possible $m$-term approximations, while in \eqref{defalmostgreedy} only projections of $f$ onto the basis vectors are considered. It was proved in \cite{DKKT2003} that $(\xx_n)_{n\in \Nt}$ is almost greedy if and only if $(\xx_n)_{n\in \Nt}$ is quasi-greedy and democratic.

The symbol $\XX\unlhd \YY$ means that the space $\XX$ is isomorphic to a complemented subspace of $\YY$. Quantitatively, we write $\XX\unlhd_C \YY$ if there are linear operators $S\colon\XX\to \YY$ and $T\colon \YY\to\XX$ such that $T\circ S=\Id_\XX$ and $\norm{ S} \, \norm{ T} \le C$. If we also have $S\circ T=\Id_\YY$, the spaces $\XX$ and $\YY$ are isomorphic, and we write $\XX\simeq_C\YY$, or $\XX\simeq\YY$ if the constant $C$ is irrelevant.

Other more specific terminology will be introduced in context.

\section{Preparatory results}\noindent
Tsirelson's space $\Ts$ is a Banach space for which the the canonical unit vectors form a normalized $1$-unconditional basis of $\Ts$ that we will denote by $(\ts_n)_{n=1}^\infty$. The lattice structure induced on $\Ts$ by its canonical basis will be crucial in our arguments. Let us get started by recalling a well-known fact for further reference.

\begin{theorem}[see \cite{CasShu1989}*{Proposition I.12}]\label{thm:CSSQ}
Tsirelson's space $\Ts$ is lattice isomorphic to its square via the bijection $\pi\colon \enbrace{1,2} \times \NN\to\NN$ given by $\pi(j,n)=2n+j-2$.
\end{theorem}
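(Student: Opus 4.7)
The plan is to reduce the statement to showing that each of the subsequences $(\ts_{2n-1})_{n=1}^\infty$ and $(\ts_{2n})_{n=1}^\infty$ of the canonical basis is equivalent to the full basis $(\ts_n)_{n=1}^\infty$. Once this is done, the $1$-unconditionality of $(\ts_n)_n$ gives the lattice direct-sum decomposition $\Ts=[\ts_n\colon n \text{ odd}]\oplus [\ts_n\colon n \text{ even}]$ via commuting norm-one projections, and composing the two subsequence equivalences with the obvious identification produces the lattice isomorphism $\Ts\simeq\Ts\oplus\Ts$ realized by $\pi(j,n)=2n+j-2$.

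To prove the subsequence equivalences, I would work directly with the implicit definition of the Tsirelson norm,
\[
\norm{x}=\max\enbrace{\norm{x}_\infty,\; \tfrac{1}{2}\sup \sum_{i=1}^N\norm{E_i x}},
\]
where the supremum is taken over admissible families $E_1<\cdots<E_N$ of finite subsets of $\NN$, admissibility meaning $N\le\min E_1$; equivalently, $\norm{x}_\Ts=\lim_k\norm{x}_k$ for the iterates $\norm{\cdot}_k$ of this defining equation. One direction is immediate: the maps $n\mapsto 2n$ and $n\mapsto 2n-1$ send any admissible family $F_1<\cdots<F_N$ to an admissible family, since $\min\{2n:n\in F_1\}=2\min F_1\ge 2N\ge N$ and likewise $2\min F_1-1\ge N$. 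An inductive comparison on the iteration depth then yields $\norm{\sum a_n\ts_n}\le \norm{\sum a_n\ts_{2n}}$ and the analogous inequality for the odd subsequence.

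The reverse inequality is more delicate. Given an admissible family $E_1<\cdots<E_N$ extracting the norm of $\sum a_n\ts_{2n}$, one may assume $E_i\subseteq 2\NN$ and set $F_i=E_i/2$. The pulled-back family satisfies only $\min F_1\ge N/2$, which is strictly weaker than admissibility. Exploiting the spacing estimate $\min F_i\ge \min F_1+i-1$, one splits $(F_i)_{i=1}^N$ into two admissible sub-families of sizes $\lceil N/2\rceil$ and $\lfloor N/2\rfloor$, each legitimately admissible for $\Ts$, at the cost of a controlled multiplicative factor in the final estimate. Combining this splitting with the recursive structure of the norm, as carried out in the Casazza--Shura argument, yields a uniform equivalence constant; an entirely analogous argument handles $(\ts_{2n-1})_n$.

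The main technical obstacle is controlling the constant uniformly across iteration depths: a naive induction of the form $C_{k+1}\le 2C_k$ between the iterated norms $\norm{\cdot}_k$ fails to close, so the argument must be refined in order to exploit the specific combinatorial structure of Tsirelson admissibility. This is precisely the content of \cite{CasShu1989}*{Proposition I.12}, which I would invoke to complete the proof.
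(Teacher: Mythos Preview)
The paper does not supply its own proof of this theorem; it simply records the result with a citation to \cite{CasShu1989}*{Proposition I.12}. Your proposal ultimately invokes exactly the same reference, so there is no discrepancy in approach. The sketch you give of the Casazza--Shura argument---reducing to the equivalence of $(\ts_{2n})_n$ and $(\ts_{2n-1})_n$ with $(\ts_n)_n$, handling one direction by pushing admissible families forward, and acknowledging that the reverse direction requires the more delicate splitting argument from the cited source---is accurate in outline and already goes beyond what the paper itself provides.
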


We also need to have a good understanding of the behavior of certain subsequences of $(\ts_n)_{n=1}^\infty$, such as the one illustrated in the following theorem.

\begin{theorem}[see \cite{CasShu1989}*{Corollary II.5}]\label{thm:CSB}
There is a constant $C$ such that for every increasing map $\varphi\colon\NN\to\NN$ and every sequence $(f_k)_{k=1}^\infty$ with $f_k\in[\ts_n \colon 1+\varphi(k-1) \le n \le \varphi(k)]$, we have
\[
\frac{1}{C} \norm{ \sum_{k=1}^\infty f_k} \le \norm{ \sum_{k=1}^{\infty} \norm{ f_k}\, \ts_{\varphi(k)} } \le C \norm{ \sum_{k=1}^\infty f_k}.
\]
\end{theorem}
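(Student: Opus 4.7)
The plan is to argue directly from the implicit recursive definition of the Tsirelson norm
\[
\norm{x}=\max\left(\norm{x}_\infty,\;\frac{1}{2}\sup\sum_{i=1}^{k}\norm{E_i x}\right),
\]
where the supremum is over all \emph{admissible} tuples $E_1<E_2<\cdots<E_k$ of finite subsets of $\NN$ (i.e., those with $k\le\min E_1$), combined with the $1$-unconditionality of $(\ts_n)_{n=1}^\infty$. Set $I_k=[1+\varphi(k-1),\varphi(k)]$, $F=\sum_k f_k$, and $g=\sum_k\norm{f_k}\,\ts_{\varphi(k)}$. The intervals $I_k$ are consecutive and pairwise disjoint with $\max I_k=\varphi(k)\ge k$, and this last inequality will be used repeatedly.

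For the easier direction $\norm{g}\le C\norm{F}$, I would pick an admissible family $F_1<\cdots<F_m$ approximately realising $\norm{g}$ and replace each $F_j$ by $\widetilde F_j:=\bigcup_{k\in F_j}I_k$. The family $(\widetilde F_j)_{j=1}^m$ is again admissible because $m\le\min F_1\le\min\widetilde F_1$, and the ordering survives since the $I_k$ are consecutive. Choosing, for each $k\in F_j$, a functional that norms $f_k$ and is supported on $I_k$, then summing these functionals and invoking $1$-unconditionality, one obtains $\norm{\widetilde F_j\, F}\ge\sum_{k\in F_j}\norm{f_k}=\norm{F_j\, g}$; summing in $j$ and applying the norm equation to $F$ yields $\norm{g}\le 2\norm{F}$.

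The reverse inequality $\norm{F}\le C\norm{g}$ is the substantive direction. I would prove it by induction on the approximating norms $\norm{\cdot}_n$ defined by $\norm{x}_0=\norm{x}_\infty$ and $\norm{x}_{n+1}=\max(\norm{x}_\infty,\tfrac{1}{2}\sup\sum_i\norm{E_i x}_n)$; the base case is immediate since disjointness of supports gives $\norm{F}_\infty=\max_k\norm{f_k}_\infty\le\max_k\norm{f_k}=\norm{g}_\infty$. For the inductive step, fix an admissible family $E_1<\cdots<E_k$ analysing $F$, and for each $i$ set $J_i=\{j\colon E_i\cap I_j\ne\emptyset\}$. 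When $|J_i|=1$, say $J_i=\{j\}$, the contribution $\norm{E_i F}_n$ is already bounded by $\norm{f_j}=\norm{\{\varphi(j)\}\, g}_\infty$. When $|J_i|\ge 2$ the piece $E_i$ spans several blocks; split it along the $I_j$'s and replace each non-empty intersection $E_i\cap I_j$ by the singleton $\{\varphi(j)\}$, producing a family $H_i:=\{\varphi(j)\colon j\in J_i\}$. The crucial observation is
\[
|H_i|=|J_i|\le|E_i|\le\min E_i\le\varphi(\min J_i)=\min H_i,
\]
so $H_i$ is admissible, and the induction hypothesis applied to the sub-block decomposition $\sum_{j\in J_i}(E_i\cap I_j)f_j$ controls $\norm{E_i F}_n$ by a uniform constant times $\norm{H_i\, g}_n$. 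Summing in $i$ and feeding the result into the norm equation for $g$ closes the step and yields $\norm{F}\le C\norm{g}$.

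The main obstacle is precisely this two-level book-keeping in the reverse inequality: one must verify that splitting each $E_i$ along the block boundaries produces admissible sub-families $H_i$ which, strung together, remain compatible with an outer admissible analysis of $g$, so that the induction does not proliferate into nested cascades that would blow up the constant. The admissibility inequality $k\le\min E_1$ combined with $\varphi(j)\ge j$ is exactly what makes the argument close with a universal $C$.
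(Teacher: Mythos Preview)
First, note that the paper does not prove this statement: it is quoted from Casazza--Shura (Corollary~II.5), whose argument (via their Lemma~II.1) does proceed by induction on the approximating norms $\norm{\cdot}_n$, so your overall architecture matches theirs.

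That said, there is a genuine error in your book-keeping. In the harder direction you display
\[
|H_i|=|J_i|\le|E_i|\le\min E_i\le\varphi(\min J_i)=\min H_i,
\]
but the step $|E_i|\le\min E_i$ is simply false: admissibility of $(E_1,\dots,E_k)$ means $k\le\min E_1$, a constraint on the \emph{number} of sets, and places no bound whatsoever on the cardinality of any individual $E_i$. Whatever you intended this inequality to accomplish is therefore unsupported. What is actually needed to close the induction is that the \emph{family} $(H_i)_{i=1}^m$ be admissible for $g$; the count condition $m\le\min H_1$ does follow (from $m\le\min E_1\le\varphi(\min J_1)$), but two consecutive $E_i$'s can meet the same block $I_j$, so the $H_i$ may overlap, and you have not explained how to absorb this without the constant blowing up along the induction. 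There is a parallel slip in the easier direction: with the natural reading $\widetilde F_j=\bigcup_{k:\varphi(k)\in F_j}I_k$, your claim $\min F_1\le\min\widetilde F_1$ fails, since $\min\widetilde F_1=1+\varphi(k_0-1)$ can be far below $\min F_1=\varphi(k_0)$; and the assertion $\norm{\widetilde F_j F}\ge\sum_k\norm{f_k}$ is false in general, because Tsirelson's space is not $\ell_1$. The Casazza--Shura proof resolves all of this by a careful case split separating blocks entirely contained in a single $E_i$ from boundary blocks split between consecutive $E_i$'s, obtaining explicit constants ($3$ and $18$); that split is precisely the ``two-level book-keeping'' you correctly identified as the main obstacle but did not carry out.
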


To facilitate the description of subsequences $(\ts_{n_{k}})_{k=1}^{\infty}$ of $(\ts_{n})_{n=1}^{\infty}$ with $k_{n+1}-k_{n}$ extremely large, which are nevertheless equivalent to $(\ts_{n})_{n=1}^{\infty}$, we need to introduce a compact notation from logic. Following \cite{CasShu1989} we call the \emph{fast growing hierarchy} to the family of `slowly rapidly growing' functions introduced by Smory\'{n}ski \cite{Smorynski1985}. Specifically, we recursively define a sequence $(F_n)_{n=0}^\infty$ of $\NN$-valued functions on the natural numbers by
\begin{align*}
F_0(j)&=j+1, \quad j\in\NN;\\
F_{n}(j)&=F_{n-1}^{(j)}(j), \quad j,\, n\in\NN.
\end{align*}
We say that a function $\varphi\colon\NN\to\NN$ is \emph{dominated by the fast growing hierarchy} is there are $n\in\NN\cup\enbrace{0}$ and $j_0\in\NN$ such that $\varphi(j) \le F_n(j)$ for all $j \ge j_0$.

We next summarize the basic properties of this notion. In general, if $X$ is a set and $f\colon X \to X$ is a map, we can recursively define maps $f^{(n)}\colon X \to X$ by
\[
f^{(0)}=\Id_X, \quad f^{(n)}=f^{(n-1)} \circ f.
\]
If $\sim$ is a relation on $X$ and $f(x)\sim f(y)$ whenever $x\sim y$, then $f^{(n)}\sim f^{(n)}(y)$ for all $n\in\NN$ and all $x$ and $y\in X$ with $x\sim y$.

\begin{lemma}\label{lem:Zero}
Suppose $f\colon\NN\to\NN$ is an increasing map with $f(1)\ge k$ for some $k\in\NN$. Then $f^{(n)}$ is increasing for every $n\in\NN$. Moreover,
\[
f^{(n)}(j)\ge (k-1)(n-m)+ f^{(m)}(j), \quad j\in\NN, \; 0\le m \le n.
\]
\end{lemma}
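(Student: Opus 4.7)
The plan is to handle the two claims in sequence, each by a short induction.

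For the monotonicity of $f^{(n)}$, I invoke the general principle stated just before the lemma with the relation $<$ on $\NN$: since $f$ is increasing, $x<y$ implies $f(x)<f(y)$, and this property is preserved under composition, so $f^{(n)}$ is increasing for every $n\in\NN$. (Alternatively, a direct induction: $f^{(0)}=\Id_\NN$ is increasing, and if $f^{(n-1)}$ is increasing then so is $f\circ f^{(n-1)}=f^{(n)}$.)

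For the quantitative lower bound, the key preliminary observation is that
\[
f(i)\ge i+(k-1), \quad i\in\NN.
\]
Indeed, since $f$ is an $\NN$-valued increasing map, we have $f(i+1)\ge f(i)+1$ for every $i\in\NN$; iterating from $i=1$ gives $f(i)\ge f(1)+(i-1)\ge k+i-1$, which is the claimed inequality.

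With this pointwise estimate at hand I fix $j\in\NN$ and $m\in\NN\cup\enbrace{0}$, and prove the inequality by induction on $n-m\ge 0$. The base case $n=m$ reduces to $f^{(m)}(j)\ge f^{(m)}(j)$, which is trivial. For the inductive step, assume $f^{(n)}(j)\ge (k-1)(n-m)+f^{(m)}(j)$, set $i=f^{(n)}(j)\in\NN$, and apply the preliminary estimate to obtain
\[
f^{(n+1)}(j)=f(i)\ge i+(k-1)=f^{(n)}(j)+(k-1)\ge (k-1)(n+1-m)+f^{(m)}(j),
\]
which closes the induction.

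There is essentially no obstacle here; the only mild subtlety is making sure one uses that $f$ is \emph{strictly} increasing on $\NN$ (which is what "increasing map $\NN\to\NN$" is intended to mean, and which is needed to get the $+1$ per step that propagates $f(1)\ge k$ into $f(i)\ge i+(k-1)$).
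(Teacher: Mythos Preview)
Your proof is correct and follows the same approach as the paper, which simply states ``It is a straightforward induction on $n$.'' You have supplied the details of that induction, including the useful intermediate estimate $f(i)\ge i+(k-1)$.
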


\begin{proof}
It is a straightforward induction on $n$.
\end{proof}

\begin{lemma}\label{lem:One}
Let $(F_n)_{n=0}^\infty$ denote the fast growing hierarchy. Then:
\begin{enumerate}[label=(\roman*), leftmargin=*, widest=ii]
\item\label{FGR:A} $F_n$ is an increasing function with $F_n(1)\ge 2$ for all $n\in\NN$.
\item\label{FGR:B} For each $j\in\NN$, $(F_n(j))_{n=0}^\infty$ is a non-decreasing sequence.
\item\label{FGR:C} $F_1(j)=2j$ and $F_2(j)=j2^j$ for all $j\in\NN$.
\end{enumerate}
\end{lemma}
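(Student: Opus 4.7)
The plan is to use the preceding Lemma \ref{lem:Zero} as the main lever: part (iii) follows from a direct iteration count, and parts (i) and (ii) are inductions on $n$ that invoke the quantitative estimate in that lemma.

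For (iii), I would simply unfold the definitions. Since $F_0(x) = x+1$, iterating $F_0$ a total of $j$ times starting from $j$ yields $2j$, so $F_1(j) = 2j$. Iterating the doubling map $F_1$ a total of $j$ times starting from $j$ yields $2^j j$, so $F_2(j) = 2^j j$.

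For (i), I would prove by induction on $n$ the slightly stronger claim that $F_n$ is increasing and $F_n(1) \ge 2$ for all $n \ge 0$; the case $n = 0$ is immediate from $F_0(j) = j+1$. For the inductive step, supposing $F_{n-1}$ has these properties, I would apply Lemma \ref{lem:Zero} to $f = F_{n-1}$ with $k = 2$: each iterate $F_{n-1}^{(i)}$ is then increasing, and
\[
F_{n-1}^{(i)}(j) \ge (i - \ell) + F_{n-1}^{(\ell)}(j), \quad 0 \le \ell \le i.
\]
Given $j_1 < j_2$, I would take $i = j_2$, $\ell = j_1$, and evaluate at $j = j_2$ to chain two monotonicities:
\[
F_n(j_2) = F_{n-1}^{(j_2)}(j_2) \ge (j_2 - j_1) + F_{n-1}^{(j_1)}(j_2) > F_{n-1}^{(j_1)}(j_1) = F_n(j_1),
\]
where the strict inequality uses monotonicity of $F_{n-1}^{(j_1)}$. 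For the value at $1$, simply $F_n(1) = F_{n-1}^{(1)}(1) = F_{n-1}(1) \ge 2$.

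For (ii), once (i) is in hand, the same quantitative estimate applied with $f = F_{n-1}$, $k = 2$, $i = j$, and $\ell = 1$ yields
\[
F_n(j) = F_{n-1}^{(j)}(j) \ge (j - 1) + F_{n-1}(j) \ge F_{n-1}(j), \quad n \ge 1,
\]
which is the desired monotonicity of $(F_n(j))_n$. There is no real obstacle here; this is essentially a bookkeeping exercise. The only mildly delicate point is keeping track of the two roles played by $j$ in the identity $F_n(j) = F_{n-1}^{(j)}(j)$ — as iteration count and as argument — and Lemma \ref{lem:Zero} is tailor-made to separate those roles.
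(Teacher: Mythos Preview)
Your proposal is correct and follows essentially the same approach as the paper: all three parts rely on Lemma~\ref{lem:Zero} applied to $f=F_{n-1}$ with $k=2$, and your arguments for (ii) and (iii) are virtually identical to the paper's. The only cosmetic difference is in (i), where the paper establishes the slightly sharper consecutive-step bound $F_n(j+1)\ge 2+F_n(j)$, whereas you compare arbitrary $j_1<j_2$ directly; both routes are equally short and use the same lemma in the same way.
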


\begin{proof}
We prove \ref{FGR:A} by induction. The result  obviously holds for $n=0$. Let $n\in\NN$ and assume that the result holds for $n-1$. We have $F_n(1)=F_{n-1}(1)=2$. Given $j\in\NN$, applying Lemma~\ref{lem:Zero} with $k=2$ gives.
\[
F_n(j+1)
=F_{n-1}^{(j+1)}(j+1)
\ge 1+ F_{n-1}^{(j)}(j+1)
\ge 2+F_{n-1}^{(j)}(j)=2+F_{n}(j).
\]

To prove \ref{FGR:B} we pick natural numbers $j$ and $n$. By \ref{FGR:A} and Lemma~\ref{lem:Zero},
\[
F_{n}(j)=F_{n-1}^{(j)}(j)\ge (j-1) +F_{n-1}(j).
\]

Let us prove \ref{FGR:C}. By induction, $F_0^{(k)}(j)=j+k$ for all $j\in\NN$ and $k\in\NN\cup\enbrace{0}$. Consequently, $F_1(j)=2j$ for all $j\in\NN$. Then, applying induction to $F_1$ gives $F_1^{(k)}(j)=j 2^k$ for all $j\in\NN$ and $k\in\NN\cup\enbrace{0}$.
\end{proof}

\begin{lemma}\label{lem:FGHProp}
Suppose that $\varphi$ and $\psi$ are dominated by the fast growing hierarchy. Then the following functions are dominated by the fast growing hierarchy:
\begin{enumerate}[label=(\roman*)]
\item\label{FGH1} $\psi\circ\varphi$;
\item\label{FGH2} $\varphi+\psi$;
\item\label{FGH3} $\varphi\psi$;
\item\label{FGH4} $\varphi^\psi$, in particular $R^\psi$ for $R\in\NN$; and
\item\label{FGH5} the function $\rho$ defined by $\rho(j)=\sum_{i=1}^j \varphi(i)$.
\end{enumerate}
\end{lemma}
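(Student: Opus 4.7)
The plan is to reduce all five parts to a single ``composition is cheap'' property of the hierarchy, namely $F_{N+1}(j)\ge F_N(F_N(j))$ for $j\ge 2$, which is immediate from $F_{N+1}(j)=F_N^{(j)}(j)$. Together with $F_N(k)\ge 2k$ for $N\ge 1$ (obtained by combining $F_1(k)=2k$ from Lemma~\ref{lem:One}\ref{FGR:C} with the monotonicity of $n\mapsto F_n(k)$ from Lemma~\ref{lem:One}\ref{FGR:B}), this will absorb each of the listed operations into at most two extra levels of the hierarchy. As a preliminary move, I would use Lemma~\ref{lem:One}\ref{FGR:B} to pick a common $N\ge 1$ and a common $j_0$ with $\varphi(j),\psi(j)\le F_N(j)$ for all $j\ge j_0$.

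For \ref{FGH1}, two applications of monotonicity give $\psi(\varphi(j))\le F_N(F_N(j))\le F_{N+1}(j)$ eventually. For \ref{FGH2}, $\varphi(j)+\psi(j)\le 2F_N(j)\le F_N(F_N(j))\le F_{N+1}(j)$, where the middle inequality uses $F_N(k)\ge 2k$ with $k=F_N(j)$. For \ref{FGH3}, the elementary numerical inequality $k^2\le k2^k=F_2(k)$ (valid for all $k\ge 1$) yields $\varphi(j)\psi(j)\le F_N(j)^2\le F_2(F_N(j))$, which is dominated by the hierarchy via \ref{FGH1}. For \ref{FGH4}, combining $k^k=2^{k\log_2 k}\le 2^{k^2}$ with $2^m\le F_2(m)$ gives
\[
\varphi(j)^{\psi(j)}\le F_N(j)^{F_N(j)}\le F_2\bigl(F_N(j)^2\bigr),
\]
and the right-hand side is dominated by first applying \ref{FGH3} to bound $F_N(j)^2$ and then \ref{FGH1} with $F_2$. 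Finally, for \ref{FGH5}, the trivial bound $\rho(j)\le j\, F_N(j)\le F_N(j)^2$ (using $j\le F_1(j)\le F_N(j)$) reduces the assertion to \ref{FGH3}.

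The one point that will require care, rather than real difficulty, is the bookkeeping of the word ``eventually'' throughout: the step $\psi(\varphi(j))\le F_N(\varphi(j))$ needs $\varphi(j)\ge j_0$, and similar thresholds reappear at every composition. I would dispatch this at the outset by recording that being dominated by the fast growing hierarchy is stable under finite shifts of the argument and under pointwise majorization, so that in each step it suffices to exhibit an eventual pointwise bound $g(j)\le F_k(j)$ for some fixed $k$. With that convention the whole argument unfolds as the short chain \ref{FGH1}\,$\Rightarrow$\,\ref{FGH2}, \ref{FGH1}\,$\Rightarrow$\,\ref{FGH3}\,$\Rightarrow$\,\ref{FGH5}, and \ref{FGH1} together with \ref{FGH3} $\Rightarrow$ \ref{FGH4}.
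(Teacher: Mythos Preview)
Your overall strategy---prove \ref{FGH1} directly from $F_{N+1}(j)\ge F_N(F_N(j))$ for $j\ge 2$, then reduce \ref{FGH2}--\ref{FGH5} to \ref{FGH1} via elementary numerical bounds---is exactly the paper's. The specific inequalities differ only cosmetically (the paper writes $\varphi+\psi\le 2\max\{\varphi,\psi\}$, $\varphi\psi\le 2^{\varphi+\psi}$, and $\varphi^\psi\le 2^{\varphi\psi}$ rather than passing through $F_N(j)^2$), and the dependency chain \ref{FGH1}$\Rightarrow$\ref{FGH2}$\Rightarrow$\ref{FGH3}$\Rightarrow$\ref{FGH4},\,\ref{FGH5} is the same.

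The one genuine wrinkle is the threshold issue in \ref{FGH1}, and your proposed patch does not close it. You correctly flag that $\psi(\varphi(j))\le F_N(\varphi(j))$ requires $\varphi(j)\ge j_0$, but ``stability under finite shifts and pointwise majorization'' does not help: nothing in the hypotheses forces $\varphi$ to be monotone or even unbounded, so $\varphi(j)<j_0$ may occur for \emph{infinitely} many $j$, and no shift of the outer argument $j$ cures that. The paper handles this by an explicit case split: for those $j$ with $\varphi(j)<j_0$ one has the uniform bound $\psi(\varphi(j))\le M:=\max_{1\le i<j_0}\psi(i)$, and since each $F_k$ is increasing and unbounded (Lemma~\ref{lem:One}\ref{FGR:A} together with $F_k(j)\ge F_0(j)=j+1$), we get $F_{N+1}(j)\ge M$ for all large $j$. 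With this two-line addition your proof of \ref{FGH1} is complete, and \ref{FGH2}--\ref{FGH5} then go through exactly as you wrote.
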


\begin{proof}
Suppose that $\varphi(j)\le F_n(j)$ for all $j\ge j_0$ and that $\psi(j)\le F_m(j)$ for all $j\ge j_1$. Set $k=1+\max\enbrace{n,m}$. Let $j_2$ be such that $F_k(j_2)\ge \psi(j)$ for all $j\in\NN[j_1]$. Pick $j\ge \max\enbrace{2,j_0,j_2}$. If $\varphi(j) \ge j_1$, by Parts~\ref{FGR:A} and \ref{FGR:B} of Lemma~\ref{lem:One},
\[
\psi(\varphi(j))\le F_m(\varphi(j))\le F_m(F_n(j))\le F_{k-1}^{(2)}(j)\le F_{k-1}^{(j)}(j)=F_k(j).
\]
In turn, if $\varphi(j)<j_1$, then $\psi(\varphi(j))\le F_k(j_2)\le F_k(j)$. This takes care of \ref{FGH1}, and the other parts can be derived from it, as we next show.

By Part~\ref{FGR:B} of Lemma~\ref{lem:One} $\eta=\max\enbrace{\varphi,\psi}$ is dominated by the fast growing hierarchy. By Part~\ref{FGR:C} of Lemma~\ref{lem:One} the function $\delta$ given by $\delta(j)=2j$ is dominated by the fast growing hierarchy. Since $\varphi+\psi\le \delta\circ\eta$, \ref{FGH2} holds. By Part~\ref{FGR:C} of Lemma~\ref{lem:One} the function $\epsilon$ given by $\epsilon(j)=2^j$ is dominated by the fast growing hierarchy. Since $\varphi\psi\le \epsilon\circ (f+g)$, \ref{FGH3} holds. Since $\varphi^\psi \le \eta\circ (\varphi\psi)$, \ref{FGH4} holds. Let $j_0\in\NN$ and $n\in\NN$ be such that $\varphi(j)\le F_n(j)$ for all $j\ge j_0$. Then, $\varphi\le \gamma\circ F_n$, where
\[
\gamma(j)=\sum_{j=1}^{j_0-1} \varphi(j) + \max\enbrace{j-j_0+1,0}F_n(j), \quad j\in\NN,
\]
Since $\gamma$ is dominated by the fast growing hierarchy, \ref{FGH5} holds.
\end{proof}

We conclude our primer on slowly rapidly increasing functions by showing the existence of a continuum of essentially different ones.

\begin{theorem}\label{thm:FGHContinuum}
There is a continuum $\Ft$ of $\NN$-valued increasing functions on $\NN$ such that:
\begin{itemize}[leftmargin=*]
\item $\varphi(j) \le 3^{j+1}$ for all $\varphi\in\Ft$ and $j\in\NN$; and
\item if $\varphi$ and $\psi$ are different elements of $\Ft$ then $\varphi(\NN)\cap\psi(\NN)$ is finite.
\end{itemize}
\end{theorem}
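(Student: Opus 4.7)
The plan is to produce, for each $\alpha\in\enbrace{0,1}^\NN$, an increasing function $\varphi_\alpha\colon\NN\to\NN$ with $\varphi_\alpha(j)\le 3^{j+1}$, in such a way that distinct $\alpha$ give rise to functions whose ranges meet in a finite set. The family $\Ft=\enbrace{\varphi_\alpha:\alpha\in\enbrace{0,1}^\NN}$ will then be a continuum of essentially different functions, because the infinite binary tree has $2^{\aleph_0}$ branches and two distinct branches share only a finite common initial segment.

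For the construction I enumerate the set of all nonempty finite binary strings as $(\sigma_n)_{n=1}^\infty$, listed first by length and then lexicographically. Since $\sum_{k=1}^j 2^k = 2^{j+1}-2$, the strings of length exactly $j$ occupy the positions in the integer interval $[2^j-1,\, 2^{j+1}-2]$. Writing $\alpha|_j\in\enbrace{0,1}^j$ for the length-$j$ initial segment of $\alpha$, I define $\varphi_\alpha(j)$ to be the unique $n$ for which $\sigma_n=\alpha|_j$. Because $\alpha|_{j+1}$ is strictly longer than $\alpha|_j$, length-first ordering forces $\varphi_\alpha(j+1)>\varphi_\alpha(j)$, so $\varphi_\alpha$ is strictly increasing, and the bound $\varphi_\alpha(j)\le 2^{j+1}-2\le 3^{j+1}$ is immediate.

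To check the almost-disjointness condition, let $\alpha\ne\beta$ in $\enbrace{0,1}^\NN$ and let $k$ be the least index where they disagree. For every $j\ge k$ the string $\alpha|_j$ extends $\alpha|_k$ while $\beta|_j$ extends $\beta|_k\ne\alpha|_k$, so these two families of extensions are disjoint. Hence $\varphi_\alpha(\NN)\cap\varphi_\beta(\NN)\subseteq\enbrace{\varphi_\alpha(1),\dots,\varphi_\alpha(k-1)}$, a finite set, and at the same time $\varphi_\alpha(k)\ne\varphi_\beta(k)$, so distinct $\alpha$ yield distinct $\varphi_\alpha$. I do not anticipate any serious obstacle: the argument amounts to bookkeeping on the full binary tree, and the exponential bound $3^{j+1}$ is extremely comfortable against the $2^{j+1}-2$ count of binary strings of length at most $j$.
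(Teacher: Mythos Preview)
Your proof is correct and rests on the same core idea as the paper's: the branches of the full binary tree form a continuum-sized almost-disjoint family of initial segments, and one need only encode each finite string injectively as a natural number with exponential growth control. The paper realizes this encoding via base-$3$ expansions of reals in $(0,1]$ followed by the map $x\mapsto 3^{j(x)}(1+2x)$, whereas your level-order enumeration of $\enbrace{0,1}^{<\omega}$ is a more direct route to the same conclusion and even yields the sharper bound $\varphi_\alpha(j)\le 2^{j+1}-2$.
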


\begin{proof}
We will use expansions in base $3$ of numbers in $[0,1]$. Given $\varepsilon=(\varepsilon_n)_{n=1}^\infty=\enbrace{1,2}^\NN$, the series
\[
\sum_{n=1}^\infty \varepsilon_n 3^{-n}
\]
defines a number in $(0,1]$, and different sequences in $\varepsilon=\enbrace{1,2}^\NN$ give rise to different numbers in $(0,1]$. Therefore, the family of increasing functions from $\NN$ into $
X=(0,1]\cap\left( \cup_{j=0}^\infty 2^{-j}\ZZ\right)$
$(s_\varepsilon)_{\varepsilon=\enbrace{1,2}^\NN}$
defined by
\[
s_\varepsilon(j)=\sum_{n=1}^j\varepsilon_n 3^{-n}, \quad \varepsilon=(\varepsilon_n)_{n=1}^\infty,\; j\in\NN,
\]
has the wished-for intersection property. Given $x\in X$, set
\[
j(x)=\min\enbrace{j\colon 3^j x\in\ZZ}\quad \mbox{and} \quad \nu(x)= 3^{j(x)} (1+2 x).
\]
Since the function $\nu\colon X \to \NN$ is increasing, the maps
\[
\nu\circ s_\varepsilon, \quad \varepsilon=\enbrace{1,2}^\NN
\]
are a family of increasing functions from $\NN$ into $\NN$ with the desired intersection property. To finish the proof we just need to notice that $\nu ( s_\varepsilon(j) )\le 3^{j+1}$ for all $\varepsilon=\enbrace{1,2}^\NN$ and $j\in\NN$.
\end{proof}

The following result shows surprising similarities between the basis of Tsirelson's space and the canonical basis of $\ell_1$.

\begin{theorem}[see \cite{CasShu1989}*{Theorems IV.a.1 and IV.c.1}]\label{thm:CSA}
Suppose $\varphi\colon\NN\to\NN$ is increasing. Then the following are equivalent:
\begin{enumerate}[label=(\roman*),leftmargin=*,widest=iii]
\item The sequences $(\ts_n)_{n=1}^\infty$ and $(\ts_{\varphi(j)})_{j=1}^\infty$ are equivalent.
\item There is a constant $C$ such that
$(\ts_{n+\varphi(j)})_{n=1}^{\varphi(j+1)-\varphi(j)}$ is $C$-equivalent to the unit vector system of $\ell_1^{\,\varphi(j+1)-\varphi(j)}$ for all $j\in\NN$.
\item $\varphi$ is dominated by the fast growing hierarchy.
\end{enumerate}
\end{theorem}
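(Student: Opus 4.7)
The plan is to establish the cycle (iii) $\Rightarrow$ (ii) $\Rightarrow$ (i) $\Rightarrow$ (iii), exploiting the iterated admissibility structure of Tsirelson's norm together with the block lemma (Theorem \ref{thm:CSB}) as a bridge between block sequences and subsequences of the canonical basis.

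For (iii) $\Rightarrow$ (ii), I would use the fundamental $\ell_1$-type lower bound built into Tsirelson's norm: any successively supported family $x_1<x_2<\cdots<x_k$ with $k \le \min\supp(x_1)$ (an \emph{admissible} family) satisfies $\|\sum_i x_i\| \ge \tfrac{1}{2}\sum_i \|x_i\|$. Applied once to the basis vectors in $[\varphi(j)+1,\varphi(j+1)]$, this yields $\ell_1$-equivalence with constant $2$ provided $\varphi(j+1)-\varphi(j)\le \varphi(j)+1$. For longer blocks one hierarchically partitions them into admissible sub-blocks and applies the inequality $n$ times, losing a factor of $1/2$ per level. The fast growing hierarchy encodes exactly this depth: the identity $F_n(j)=F_{n-1}^{(j)}(j)$ supplies an admissible sub-partition at the $n$-th recursive level, and an induction on $n$, combined with the growth estimates of Lemmas \ref{lem:Zero} and \ref{lem:One}, yields $\ell_1$-equivalence with uniform constant $2^n$ whenever $\varphi$ is dominated by $F_n$. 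The matching upper bound is the trivial triangle inequality $\|\sum a_i \ts_i\|\le \sum |a_i|$.

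For (ii) $\Rightarrow$ (i), I would apply Theorem \ref{thm:CSB} twice. With $f_k = c_k \ts_{\varphi(k)}$ one obtains $\|\sum_k c_k \ts_{\varphi(k)}\| \asymp \|\sum_k |c_k|\,\ts_{\varphi(k)}\|$ directly. To compare with $\|\sum_k c_k \ts_k\|$ one uses (ii) to select a second block sequence $g_k \in [\ts_n:1+\varphi(k-1)\le n\le \varphi(k)]$ with $\|g_k\|=|c_k|$, constructed so that its $\ell_1$-decomposition along the block mirrors the weight that $c_k \ts_k$ carries within the initial segment of the full basis. Applying Theorem \ref{thm:CSB} to $(g_k)$ converts $\|\sum_k g_k\|$ into $\|\sum_k |c_k|\,\ts_{\varphi(k)}\|$, and unwinding the identification yields the bilateral comparison $\|\sum_k c_k \ts_{\varphi(k)}\| \asymp \|\sum_k c_k \ts_k\|$ that constitutes equivalence of the two bases.

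For (i) $\Rightarrow$ (iii), I would argue contrapositively. If $\varphi$ escapes every level of the fast growing hierarchy, then for every $n$ one can locate arbitrarily large $j$ with $\varphi(j) > F_n(j)$; by a careful reversal of the argument in (iii) $\Rightarrow$ (ii), no finite $C$ can uniformly witness $\ell_1$-equivalence of all the blocks $[\varphi(j)+1, \varphi(j+1)]$, so condition (ii) fails. Combined with (i) $\Rightarrow$ (ii)---which follows from the block lemma by running the (ii) $\Rightarrow$ (i) argument in reverse---this yields (i) $\Rightarrow$ (iii). The main technical obstacle is (iii) $\Rightarrow$ (ii): one must match the combinatorial recursion of the fast growing hierarchy to the analytic recursion of Tsirelson's norm with sufficient precision that $n$ admissible partition levels are always available when $\varphi\le F_n$, and the reversal used in (i) $\Rightarrow$ (iii) requires a quantitative lower bound on the $\ell_1$-constant of blocks whose sizes exceed the FGH's growth window.
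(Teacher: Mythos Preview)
The paper does not supply its own proof of this theorem: it is stated with an explicit attribution to Casazza--Shura \cite{CasShu1989}*{Theorems IV.a.1 and IV.c.1} and is used as a black box throughout. So there is no in-paper argument to compare your proposal against.

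As a sketch toward the Casazza--Shura result, your outline captures the correct architecture for (iii) $\Rightarrow$ (ii): the recursion defining $F_n$ does indeed mirror the iterated admissibility needed to push the Tsirelson lower bound $\|\sum x_i\|\ge \tfrac12\sum\|x_i\|$ through $n$ levels, yielding a uniform $\ell_1$-constant of order $2^n$ on the blocks. Your treatment of the remaining implications is thinner. In (ii) $\Rightarrow$ (i), the sentence about choosing $g_k$ ``so that its $\ell_1$-decomposition along the block mirrors the weight that $c_k\ts_k$ carries within the initial segment of the full basis'' does not actually explain how one recovers $\|\sum_k c_k \ts_k\|$ from block data; Theorem~\ref{thm:CSB} compares $\|\sum f_k\|$ with $\|\sum \|f_k\|\,\ts_{\varphi(k)}\|$, not with $\|\sum c_k \ts_k\|$, and bridging that gap is exactly where the $\ell_1$-structure of the blocks must be invoked in a precise way rather than gestured at. Similarly, in (i) $\Rightarrow$ (iii) you appeal to ``(i) $\Rightarrow$ (ii) by running the argument in reverse'' and to a ``careful reversal'' of (iii) $\Rightarrow$ (ii), but neither reversal is automatic: showing that failure of FGH-domination forces the $\ell_1$-constants of the blocks to blow up requires a genuine lower-bound argument on the Tsirelson norm, which is the hard direction of the Casazza--Shura analysis. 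Your proposal identifies the right ingredients but does not yet contain the mechanisms that make the two nontrivial implications work.
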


\section{Direct sums in the sense of Tsirelson's space}\noindent
Given a sequence space $\LL$, and a family $(\XX_j, \norm{\cdot}_{\XX_{j}})_{j\in\Jt}$ of quasi-Banach spaces with moduli of concavity uniformly bounded, the space
\[
\left(\bigoplus_{j\in\Jt} \XX_j\right)_\LL=\enbrace{f=(f_j)_{j\in\Jt}\in\prod_{j\in\Jt} \XX_j\colon \norm{ (\norm{ f_j}_{\XX_{j}})_{n\in\Jt} }_\LL<\infty}
\]
is a quasi-Banach space with the quasi-norm
\[
\norm{ f}= \norm{ (\norm{ f_j}_{\XX_{j}})_{n\in\Jt}}.
\]
For each $k\in\Jt$ let $L_k\colon\XX_k \to (\bigoplus_{j\in\Jt} \XX_j)_\LL$ be the canonical embedding. If for each each $j\in\Jt$, $\XB_j=(\xx_{j,n})_{n\in\Nt_j}$ is a basis of $\XX_j$ and both $\XB_j$ and $\XB_j^*$ are uniformly bounded, then the sequence
\[
\left(\bigoplus_{j\in\Jt} \XB_j\right)_\LL
=\left( L_j(\xx_{j,n})\right)_{n\in\Nt_j,\; j\in\Jt}
\]
is a basis of the space $(\bigoplus_{j\in\Jt} \XX_j)_\LL$. If $\XB_j$ is normalized for all $j\in\Jt$, so is $(\bigoplus_{j\in\Jt} \XB_j)_\LL$. If $\XB_j$ is a $K$-unconditional basis for each $j\in\Jt$, so is $(\bigoplus_{j\in\Jt} \XB_j)_\LL$. In particular, if each $\XX_j$ is a sequence space of a set $\Nt_j$, then, $(\bigoplus_{j\in\Jt} \XB_j)_\LL$ is a sequence space on the countable set
\[
\Nt=\cup_{j\in\Jt} \enbrace{ j} \times \Nt_j,
\]
modulus the natural identification of the unit vector $\ee_{j,n}$, $(j,n)\in\Nt$, with the vector $L_j(\ee_n)$. If $\XB_j$ is $C$-quasi-greedy for all $j\in\Jt$, then $\XB$ is $C$-quasi-greedy.

Set $s=\abs{\Nt}$ and suppose that $\Nt_j=\NN[s_j]$ for some sequence $(s_j)_{j=1}^\infty$ in $\NN\cup\enbrace{\infty}$, where
\[
\NN[m]=\ZZ\cap[1,m],\quad m\in\NN.
\]
Then, if $\XB_j$ is a Schauder basis with constant $K$ for each $j\in\Jt$, and $\pi=(\pi_1,\pi_2)\colon \NN[s]\to\Nt$ is a bijection with $\pi_2(m) \le \pi_2(n)$ whenever $m\le n$ and $\pi_1(m)=\pi_1(n)$, then $(L_{\pi_1(n)}( \xx_{\pi(n)}))_{n=1}^s$ is a Schauder basis with constant $K$.

If $\XX_j=\XX$ for all $j\in\Jt$, we set $\LL(\XX)=(\bigoplus_{j\in\Jt} \XX_j)_\LL$. Similarly, if $\XB_j=\XB$ for all $j\in\Jt$, we set $\LL(\XB) =(\bigoplus_{j\in\Jt} \XX_j)_\LL$.

Let us next present a family of ``Tsirelson-like'' spaces, called convexified Tsirelson spaces, also introduced in the aforementioned paper \cite{FigielJohnson1974} by Figiel and Johnson. For broader generality we include the nonlocally convex members of the family in the following definition (cf.\ \cite{CasShu1989}*{p.\ 116 ff.}).

Fix $0<p<\infty$. The \emph{$p$-convexification of $\Ts$}, denoted $\Ts^{(p)}$, is the set of all sequences $f=(a_n)_{n=1}^\infty$ such that $\abs{f}^p:=(\abs{a_n}^p)_{n=1}^\infty\in\Ts$, equipped with the quasi-norm (norm if $p\ge 1$)
\[
\norm{ f }_{\Ts^{(p)}}=\norm{ \sum_{n=1}^{\infty}\abs{a_{n}}^{p}\, \ts_{n}}_{\Ts}^{1/p}.
\]

Of course, for $p=1$ we obtain the usual space $\Ts$.

In general, given $0<p<\infty$ the $p$-convexification of a sequence space $\LL$ is defined by
\[
\LL^{(p)}=\enbrace{ f\in \FF^\Jt \colon \abs{f}^p \in\LL}.
\]
It is routine to check that $\LL^{(p)}$, endowed with the quasi-norm $f\mapsto \norm{ \abs{f}^p }_\LL^{1/p}$, is a sequence space. Clearly, $\LL^{(1)}=\LL$, $\LL^{(pq)}=(\LL^{(p)})^{(q)}$ for all $p$, $q\in(0,\infty)$. If $(\XX_j)_{j\in\Jt}$ is a family of sequence spaces, then
\begin{equation}\label{eq:directsum+pconvex}
\left(\left(\bigoplus_{j\in\Jt} \XX_j\right)_\LL\right)^{(p)}
=\left( \bigoplus_{j\in\Jt} \XX_j^{(p)}\right)_{\LL^{(p)}}.
\end{equation}

Our constructions of special types of bases in the spaces $\Ts^{(p)}$ in Sections~\ref{sect:greedy} and \ref{Sect:AlmostGreedy} rely on Theorem~\ref{thm:TsirelsonIsomorphism}, which roughly speaking tells us that, despite the fact that the canonical bases of $\Ts^{(p)}$ and $\ell_{p}$ are not equivalent, they move away one from another very slowly.

\begin{theorem}\label{thm:TsirelsonIsomorphism}
Let $0<p<\infty$. Suppose that $\varphi\colon\NN\to\NN$ is a function dominated by the fast growing hierarchy. Then $( \bigoplus_{j=1}^\infty \ell_p^{\varphi(j)})_{\Ts^{(p)}}$ is lattice isomorphic to $\Ts^{(p)}$ via the canonical bijection
\[
\pi\colon\NN\to \enbrace{(j,n)\in\NN^2 \colon n\le \varphi(j)}
\]
given by $\pi(k)=(j,n)$ if $k=n+\sum_{i=1}^{j-1} \varphi(i)$.
\end{theorem}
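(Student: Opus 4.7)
The plan is to combine the block-basis result of Theorem~\ref{thm:CSB} with the two equivalences in Theorem~\ref{thm:CSA}, using as intermediary the partial-sum function $\psi\colon\NN\cup\enbrace{0}\to\NN\cup\enbrace{0}$ given by $\psi(0)=0$ and $\psi(j)=\sum_{i=1}^{j}\varphi(i)$. Since $\varphi\ge 1$, $\psi$ is strictly increasing, and since $\varphi$ is dominated by the fast growing hierarchy, so is $\psi$ by Lemma~\ref{lem:FGHProp}~(v). The canonical bijection sends $(j,n)$ with $1\le n\le\varphi(j)$ to $n+\psi(j-1)\in\NN$, so the map in question sends unit vectors to unit vectors, and to conclude that it is a lattice isomorphism it suffices to establish the norm equivalence
\[
\norm{f}_{\Ts^{(p)}}^p \approx \norm{\sum_{j=1}^\infty \norm{g_j}_{\ell_p^{\varphi(j)}}^p\,\ts_j}_\Ts,
\]
for every $f=\sum_{j=1}^\infty\sum_{n=1}^{\varphi(j)}a_{j,n}\,\ts^{(p)}_{n+\psi(j-1)}\in\Ts^{(p)}$, writing $g_j:=(a_{j,n})_{n=1}^{\varphi(j)}\in\ell_p^{\varphi(j)}$.

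Raising everything to the $p$-th power permits working entirely inside $\Ts$: the elements $F_j:=\sum_{n=1}^{\varphi(j)}\abs{a_{j,n}}^p\,\ts_{n+\psi(j-1)}\in\Ts$ have disjoint supports inside the intervals $[\psi(j-1)+1,\psi(j)]$, and $\norm{f}_{\Ts^{(p)}}^p=\norm{\sum_j F_j}_\Ts$. Applying Theorem~\ref{thm:CSB} to the increasing function $\psi$ and the blocks $(F_j)_j$ gives
\[
\norm{f}_{\Ts^{(p)}}^p \approx \norm{\sum_j \norm{F_j}_\Ts\,\ts_{\psi(j)}}_\Ts.
\]
Because $\psi$ is dominated by the fast growing hierarchy, Theorem~\ref{thm:CSA}~(ii) applied to $\psi$ yields a single constant $C$ such that $(\ts_{n+\psi(j-1)})_{n=1}^{\varphi(j)}$ is $C$-equivalent to the unit vector basis of $\ell_1^{\varphi(j)}$ uniformly in $j$, whence $\norm{F_j}_\Ts\approx\norm{g_j}_{\ell_p^{\varphi(j)}}^p$ with uniform constants. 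Finally, Theorem~\ref{thm:CSA}~(i) applied to $\psi$ gives that $(\ts_{\psi(j)})_{j=1}^\infty$ is equivalent to $(\ts_j)_{j=1}^\infty$, and substituting this equivalence into the last display produces the desired right-hand side.

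The one point requiring care is the boundary case $j=1$, since as literally stated Theorem~\ref{thm:CSA}~(ii) only delivers uniform $\ell_1^{\psi(j+1)-\psi(j)}$-equivalence for blocks starting at position $1+\psi(j)$ with $j\ge 1$, thereby leaving out the initial block $(\ts_n)_{n=1}^{\varphi(1)}$. This missing piece is finite-dimensional of the fixed length $\varphi(1)$ and hence is trivially equivalent to $\ell_1^{\varphi(1)}$ with a constant depending only on $\varphi(1)$, which can be absorbed into the overall constant. I anticipate no further obstacle: the algebraic identity $\abs{f}^p=\sum_j F_j$ inside $\Ts$ is what allows Theorem~\ref{thm:CSB} to be invoked directly in the unconvexified space, so no separate $p$-convexified version of that block lemma has to be developed.
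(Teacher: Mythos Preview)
Your proof is correct and follows essentially the same route as the paper's: reduce to $p=1$ via the $p$-th power (the paper invokes \eqref{eq:directsum+pconvex} for the same effect), introduce the partial-sum function $\psi$, apply Theorem~\ref{thm:CSB} to block the norm, then use Theorem~\ref{thm:CSA} twice---once for the uniform $\ell_1$-equivalence of the blocks and once for the equivalence $(\ts_{\psi(j)})_j\sim(\ts_j)_j$. Your explicit treatment of the $j=1$ boundary block is a welcome clarification; the paper handles it more tersely by noting that the shifted function $j\mapsto 1+\psi(j-1)$ is also dominated by the fast growing hierarchy.
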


\begin{proof}
By \eqref{eq:directsum+pconvex} it suffices to consider the case $p=1$. By Part~\ref{FGH5} of Lemma~\ref{lem:FGHProp}, the function $\psi\colon\NN\to\NN$ defined $\psi(j)=\sum_{i=1}^j \varphi(i)$ is dominated by the fast growing hierarchy. Let $\LL$ be the sequence space corresponding to the subbasis $(\ts_{\psi(j)})_{j=1}^\infty$ of the unit vector system of $\Ts$. For each $j\in\NN$, let $\XX_j$ the sequence space corresponding to the finite subbasis $(\ts_{n})_{n=\psi(j-1)+1}^{\psi(j)}$. Since the function $j\mapsto 1+\psi(j-1)$ also is dominated by the fast growing hierarchy, we infer from Theorem~\ref{thm:CSA} that
\[
\left( \bigoplus_{j=1}^\infty \ell_1^{\varphi(j)}\right)_{\Ts}=\left( \bigoplus_{j=1}^\infty \XX_j \right)_{\LL}
\]
up to an equivalent norm. In turn, an application of by Theorem~\ref{thm:CSB} gives that the mapping $(a_n)_{n=1}^\infty \mapsto (a_{\pi(n)})_{n=1}^\infty$ restricts to a lattice isomorphism from $\Ts$ onto $\left( \bigoplus_{j=1}^\infty \XX_j \right)_{\LL}$.
\end{proof}

We next see an application of Theorem~\ref{thm:TsirelsonIsomorphism} to direct sums involving the Haar system in $[0,1]$, which we will apply in Section~\ref{sect:greedy}.

Given $n\in\NN$, let $\Dy_n$ be the set consisting of all dyadic intervals of length is $2^{-n+1}$ contained in $[0,1]$. Note that $\abs{\Dy_n}=2^{n-1}$. Given $I\in\Dy:=\cup_{n=1}^\infty \Dy_n$, $h_I^{(p)}$ denotes the $L_p$-normalized Haar function whose support is $I$. Given $A\subseteq\NN$, we denote by $L_p^A$ the $(2^{n-1}\abs{A})$-dimensional subspace of $L_p=L_p([0,1])$ spanned by
\[
\HB^{(p)}_{A}=\left(h^{(p)}_I\right)_{I\in\Dy_A},
\]
where $\Dy_A=\cup_{n\in A}\Dy_n$. For each $n\in\NN$ let $\Sigma_n$ denote the finite $\sigma$-algebra generated by $\Dy_n$. Note that, in the case when $A$ is finite,
\begin{equation}\label{eq:DyadicEmbedding}
L_p^A \subseteq L_p(\Sigma_n), \quad n=\max(A).
\end{equation}

Given a sequence $\AI=(A_j)_{j=1}^\infty$ of subsets of $\NN$ and $0<p<\infty$ in $\NN$ we define the direct sum
\begin{equation}\label{eq:LpTsirelsonSpace}
\Ba^{(p)}_{\AI}=\left(\bigoplus_{j=1}^\infty L_p^{A_j}\right)_{\Ts^{(p)}}.
\end{equation}

\begin{theorem}\label{thm:IsoTsHaar}
Let $1<p<\infty$ and let $\AI=(A_j)_{j=1}^\infty$ be a sequence of nonempty subsets of $\NN$. Suppose that $\varphi(j):=\sup(A_j)<\min(A_{j+1})$ for each $j\in\NN$, and that the function $\varphi$ is dominated by the fast growing hierarchy. Then $\Ba^{(p)}_{\AI} \simeq \Ts^{(p)}$.
\end{theorem}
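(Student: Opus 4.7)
The strategy is to sandwich $\Ba^{(p)}_{\AI}$ between two Tsirelson-style direct sums that Theorem~\ref{thm:TsirelsonIsomorphism} identifies with $\Ts^{(p)}$, and then to close the argument by a decomposition method.

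The first task is to establish uniform complementations. Since $1<p<\infty$, the Haar system is an unconditional basis of $L_p$ with some constant $K_p$, and any subspace spanned by a subset of this basis is $K_p$-complemented in $L_p$ via the natural Haar-projection. Combining this with the inclusion $L_p^{A_j}\subseteq L_p(\Sigma_{\varphi(j)+1})$ from \eqref{eq:DyadicEmbedding}, and with the isometry $L_p(\Sigma_{\varphi(j)+1})\cong\ell_p^{2^{\varphi(j)}}$ (since $\Sigma_{\varphi(j)+1}$ has $2^{\varphi(j)}$ equal-measure atoms), one gets $L_p^{A_j}\unlhd_{K_p}\ell_p^{2^{\varphi(j)}}$ uniformly in $j$. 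For the reverse direction, $\varphi(j)=\sup(A_j)$ is the maximum of $A_j$ and belongs to $A_j$, so the $L_p$-normalized Haar functions $(h_I^{(p)})_{I\in\Dy_{\varphi(j)}}$ lie inside $L_p^{A_j}$; having pairwise disjoint supports they span isometrically $\ell_p^{2^{\varphi(j)-1}}$, and Haar unconditionality makes this copy $K_p$-complemented in $L_p^{A_j}$. Passing to Tsirelson direct sums preserves these constants, so
\[
\left(\bigoplus_{j=1}^\infty \ell_p^{2^{\varphi(j)-1}}\right)_{\Ts^{(p)}}\unlhd\Ba^{(p)}_{\AI}\unlhd\left(\bigoplus_{j=1}^\infty \ell_p^{2^{\varphi(j)}}\right)_{\Ts^{(p)}}.
\]
By part~\ref{FGH4} of Lemma~\ref{lem:FGHProp}, applied with $R=2$, the function $j\mapsto 2^{\varphi(j)}$ is dominated by the fast growing hierarchy, and similarly for $j\mapsto 2^{\varphi(j)-1}$. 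Theorem~\ref{thm:TsirelsonIsomorphism} then collapses both enveloping spaces to $\Ts^{(p)}$, yielding the two-way complementation $\Ts^{(p)}\unlhd\Ba^{(p)}_{\AI}\unlhd\Ts^{(p)}$.

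The final step is to upgrade this two-way complementation into an isomorphism by a Pe{\l}czy\'nski-style decomposition. Writing $\Ts^{(p)}\simeq\Ba^{(p)}_{\AI}\oplus Z$ and $\Ba^{(p)}_{\AI}\simeq\Ts^{(p)}\oplus W$, substitution gives $\Ts^{(p)}\simeq\Ts^{(p)}\oplus(W\oplus Z)$. Combined with the square property $\Ts^{(p)}\simeq\Ts^{(p)}\oplus\Ts^{(p)}$, which follows from Theorem~\ref{thm:CSSQ} and the $p$-convexification identity \eqref{eq:directsum+pconvex}, and with the flexibility of Theorem~\ref{thm:TsirelsonIsomorphism} — which lets any complemented summand of the form $(\bigoplus_j \ell_p^{n_j})_{\Ts^{(p)}}$ be absorbed into a larger direct sum of the same shape by enlarging the FGH-dominated sequence of exponents — a standard exchange trick forces $\Ba^{(p)}_{\AI}\simeq\Ts^{(p)}$. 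The main obstacle is precisely this last step, since a bare two-way complementation does not imply isomorphism in general (Schroeder--Bernstein famously fails for Banach spaces); the crux is verifying that $\Ts^{(p)}$ has enough self-absorbing power to swallow the complemented pieces $W$ and $Z$ arising from the sandwich, which ultimately relies on the robustness of FGH-dominance under the operations recorded in Lemma~\ref{lem:FGHProp} together with the self-reproducing structure encoded in Theorem~\ref{thm:CSSQ}.
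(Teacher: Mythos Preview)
Your sandwich argument is correct and yields the two-way complementation $\Ts^{(p)}\unlhd\Ba^{(p)}_{\AI}\unlhd\Ts^{(p)}$. The gap lies entirely in the last paragraph. The version of Pe\l czy\'nski's decomposition recorded in the paper as Theorem~\ref{thm:PDT} requires \emph{both} spaces to be isomorphic to their squares; while $(\Ts^{(p)})^2\simeq\Ts^{(p)}$ does follow from Theorem~\ref{thm:CSSQ} and \eqref{eq:directsum+pconvex}, you give no argument for $(\Ba^{(p)}_{\AI})^2\simeq\Ba^{(p)}_{\AI}$. The one-sided square property only produces $\Ba^{(p)}_{\AI}\simeq\Ts^{(p)}\oplus\Ba^{(p)}_{\AI}$, and the chain of substitutions stalls there. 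You yourself flag this as ``the crux'', but then only list ingredients (Lemma~\ref{lem:FGHProp}, Theorem~\ref{thm:CSSQ}) rather than carry out a verification. Neither of those results says anything about how the Haar blocks $L_p^{A_j}$ sit inside one another, and the natural route through Proposition~\ref{prop:DSSquare} would need the Haar basis of $L_p^{A_j}$ to embed uniformly as a sub-basis of that of $L_p^{A_k}$ for $j\le k$; this fails in general because the level sets $A_j$ and $A_k$ can have different gap patterns, so the tree structures of the two Haar families do not match up.

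The paper avoids the decomposition issue altogether by invoking M\"uller's local Gamlen--Gaudet theorem \cite{Muller1988}*{Theorem~1}, which gives a \emph{uniform isomorphism} $L_p^{A_j}\simeq_C\ell_p^{\psi(j)}$ with $\psi(j)=\dim(L_p^{A_j})$, not merely a complementation. Once every block is genuinely isomorphic to an $\ell_p^n$, the whole direct sum becomes isomorphic (rather than just two-way complemented) to $(\bigoplus_j\ell_p^{\psi(j)})_{\Ts^{(p)}}$, and a single application of Theorem~\ref{thm:TsirelsonIsomorphism} finishes the proof. The missing idea in your argument is precisely this block-level isomorphism; without it, or an independent proof that $\Ba^{(p)}_{\AI}$ is isomorphic to its own square, the argument does not close.
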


\begin{proof}
By \eqref{eq:DyadicEmbedding}, $\psi(j):=\dim (L_p^{A_j})\le 2^{\varphi(j)} \le \psi(j+1) $ for all $j\in\NN$. Therefore, by Part~\ref{FGH4} of Lemma~\ref{lem:FGHProp}, the function $\psi$ is increasing and dominated by the fast growing hierarchy. By \cite{Muller1988}*{Theorem 1}, there is a constant $C$ such that $L_p^{A_j}\simeq_C \ell_p^{\psi(j)}$ for all $j\in\NN$. Therefore, $\Ba^{(p)}_{\AI}\simeq ( \bigoplus_{j=1}^\infty \ell_p^{\psi(j)})_{\Ts^{(p)}}$. An application of Theorem~\ref{thm:TsirelsonIsomorphism} puts an end to the proof.
\end{proof}

For further applications of Theorem~\ref{thm:TsirelsonIsomorphism}, we will need the Schr\"oder-Bernstein principle for unconditional bases, which we enunciate in the language of lattices.

\begin{theorem}[see \cite{Wojtowicz1988}*{Corollary 1}]\label{thm:SBUB}
Let $\LL_1$ and $\LL_2$ be sequence spaces. Suppose that $\LL_1$ is isomorphic to a restriction of $\LL_2$ and, the other way around, $\LL_2$ is isomorphic to a restriction of $\LL_1$. Then, $\LL_1$ and $\LL_2$ are lattice isomorphic.
\end{theorem}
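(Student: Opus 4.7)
The plan is to run the classical Schröder–Bernstein construction at the level of the index sets $\Jt_1$ and $\Jt_2$, and then to exploit the $1$-unconditionality of the unit vector system of any sequence space in order to glue the two resulting piecewise lattice isomorphisms into a single coordinate bijection.

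Unpacking the hypotheses as formulated in the introduction, I would obtain two set-theoretic injections $\alpha\colon \Jt_1\hookrightarrow \Jt_2$, with image $B_1=\alpha(\Jt_1)\subseteq\Jt_2$, and $\beta\colon \Jt_2\hookrightarrow \Jt_1$, with image $B_2=\beta(\Jt_2)\subseteq\Jt_1$, such that $\alpha$ implements a lattice isomorphism between $\LL_1$ and $\LL_2|_{B_1}$ and $\beta$ implements one between $\LL_2$ and $\LL_1|_{B_2}$. I would then apply the standard Schröder–Bernstein construction: set $C_0=\Jt_1\setminus B_2$, $C_{n+1}=\beta(\alpha(C_n))$, $C=\bigcup_{n\ge 0} C_n$, and define
\[
\pi\colon \Jt_1\to \Jt_2, \qquad \pi(x)=\alpha(x) \text{ if } x\in C, \qquad \pi(x)=\beta^{-1}(x) \text{ if } x\in \Jt_1\setminus C.
\]
A short induction on $n$ gives the key identity $\beta^{-1}(C\cap B_2)=\alpha(C)$ which, together with the inclusion $\Jt_1\setminus C\subseteq B_2$ (coming from $\Jt_1\setminus B_2=C_0\subseteq C$), ensures that $\pi$ is a well-defined bijection with $\pi(\Jt_1\setminus C)=\Jt_2\setminus\alpha(C)$.

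To convert this bijection into a lattice isomorphism, I would invoke the fact that the canonical unit vector basis of any sequence space is $1$-unconditional. Consequently, every $f\in\LL_1$ decomposes under the coordinate projections onto $C$ and $\Jt_1\setminus C$ with norms controlled by $\|f\|_{\LL_1}$, and conversely $\|f\|_{\LL_1}$ is controlled (up to the modulus of concavity) by the sum of the norms of its two pieces; the analogous assertion holds in $\LL_2$ with respect to the partition $\alpha(C)\sqcup(\Jt_2\setminus\alpha(C))$. Restricting the lattice isomorphism induced by $\alpha$ to the $C$-coordinates yields $\LL_1|_C\simeq\LL_2|_{\alpha(C)}$ via $\alpha|_C$, while restricting the lattice isomorphism induced by $\beta^{-1}$ to $\Jt_1\setminus C\subseteq B_2$ yields $\LL_1|_{\Jt_1\setminus C}\simeq \LL_2|_{\Jt_2\setminus\alpha(C)}$ via $\beta^{-1}|_{\Jt_1\setminus C}$, with constants inherited from those of $\alpha$ and $\beta$. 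Gluing along the direct-sum decompositions then produces the lattice isomorphism $\LL_1\simeq \LL_2$ implemented by $\pi$.

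The argument is essentially a bookkeeping exercise rather than a genuinely analytic one; the step that will require the most care is the verification of the set-theoretic identity $\beta^{-1}(C\cap B_2)=\alpha(C)$ together with the consistent tracking of the direction of each coordinate-permutation map, since the definition of being lattice isomorphic via a bijection fixes a particular direction that must be respected when the two piecewise isomorphisms are spliced together.
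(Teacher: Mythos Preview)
Your argument is correct and is essentially the standard proof of the Schr\"oder--Bernstein principle for unconditional bases. Note, however, that the paper does not supply its own proof of this theorem: it is quoted from \cite{Wojtowicz1988}*{Corollary 1} and used as a black box. The approach there is the same as yours---run the set-theoretic Schr\"oder--Bernstein construction on the index sets to obtain a bijection $\pi$ that agrees with $\alpha$ on $C$ and with $\beta^{-1}$ on the complement, and then use unconditionality to check that the coordinate permutation induced by $\pi$ is a (lattice) isomorphism---so there is nothing to compare.
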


\begin{proposition}\label{prop:DSSquare}
Let $\LL$ be a sequence space. Suppose that $\LL$ is lattice isomorphic to its square via a bijection $\pi\colon \enbrace{1,2}\times \NN\to\NN$ such that $\pi(i,\cdot)$ is increasing, $i=1$, $2$. For each $j\in\NN$, let $\XX_j$ be a sequence space, and assume that there is a constant $C$ such that $\XX_j$ is lattice $C$-isomorphic to a restriction of $\XX_k$ for all $(j,k)\in\NN^2$ with $j\le k$. Then $\XX=(\bigoplus_{j=1}^\infty \XX_j)_\LL$ is lattice isomorphic to its square.
\end{proposition}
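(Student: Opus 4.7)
The plan is to apply the Schr\"oder--Bernstein principle for unconditional bases (Theorem~\ref{thm:SBUB}), which reduces the task to showing that $\XX$ is lattice isomorphic to a restriction of $\XX^2$ and, conversely, that $\XX^2$ is lattice isomorphic to a restriction of $\XX$.

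First I would rewrite $\XX^2$ as a single $\LL$-direct sum. Since
\[
\XX^2=\XX\oplus\XX=\left(\bigoplus_{(i,j)\in\enbrace{1,2}\times\NN}\XX_j\right)_{\LL^2},
\]
and $\pi$ induces a lattice isomorphism between $\LL^2$ and $\LL$, transporting the outer sum through $\pi$ yields a lattice isomorphism between $\XX^2$ and $(\bigoplus_{n\in\NN}\XX_{j(n)})_\LL$, where $j(n)$ denotes the second coordinate of $\pi^{-1}(n)$. With this representation in hand, one direction of the Schr\"oder--Bernstein hypothesis is essentially free: because $\pi(1,\cdot)$ is strictly increasing, the restriction of $\LL^2$ to $\enbrace{1}\times\NN$ is lattice isomorphic to $\LL$ itself, and restricting the $\LL^2$-direct sum above to that same coordinate strip recovers $(\bigoplus_j\XX_j)_\LL=\XX$. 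This exhibits $\XX$ as a lattice restriction of $\XX^2$.

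For the reverse inclusion, the decisive observation is that, since $\pi(1,\cdot)$ and $\pi(2,\cdot)$ are both strictly increasing $\NN$-valued maps whose images partition $\NN$, we have $\pi(i,j)\ge j$ and hence $j(n)\le n$ for every $n\in\NN$. The hypothesis on the $\XX_j$ then supplies, uniformly in $n$, a lattice $C$-isomorphism $T_n$ from $\XX_{j(n)}$ onto the restriction of $\XX_n$ to some subset $S_n$ of its index set. Bundling the $T_n$ coordinatewise produces a lattice isomorphism from $(\bigoplus_n\XX_{j(n)})_\LL$ onto the restriction of $\XX=(\bigoplus_n\XX_n)_\LL$ to $\bigcup_n\enbrace{n}\times S_n$, the uniform constant $C$ ensuring that the resulting direct-sum map is bounded. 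Thus $\XX^2$ is lattice isomorphic to a restriction of $\XX$, and Theorem~\ref{thm:SBUB} closes the argument.

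The only step requiring real attention is the index inequality $j(n)\le n$ forced by the joint monotonicity of $\pi(1,\cdot)$ and $\pi(2,\cdot)$; this is exactly what aligns the restriction ladder $\XX_j\hookrightarrow\XX_k$ (for $j\le k$) with the ambient coordinate $n$ of the big $\LL$-direct sum, and it is what makes the hypothesis ``$\XX_j$ is a lattice restriction of $\XX_k$ for $j\le k$'' usable at all. Everything else is bookkeeping: transporting the lattice isomorphism $\LL^2\cong\LL$ through a direct sum, and observing that uniformly bounded coordinate restrictions glue into a bounded lattice map on the $\LL$-direct sum.
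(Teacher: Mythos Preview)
Your proof is correct and follows essentially the same route as the paper's: both rewrite $\XX^2$ as $(\bigoplus_n \XX_{\psi(n)})_\LL$ via the bijection $\pi$, use the key inequality $\psi(n)\le n$ (equivalently $\pi(i,j)\ge j$, forced by the monotonicity of $\pi(i,\cdot)$) to embed $\XX^2$ as a lattice restriction of $\XX$, note the trivial reverse embedding, and conclude with Theorem~\ref{thm:SBUB}. Your write-up is more explicit about the bookkeeping, but the argument is the same.
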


\begin{proof}
The sequence space $\XX^2$ is lattice isomorphic to the sequence space $(\bigoplus_{k=1}^\infty \XX_{\psi(k)})_\LL$, where $\psi(k)=j$ if $k=\pi(i,j)$ either for $i=1$ or $i=2$. Since $k=\pi(i,j)$ implies $j\le k$, we have $\psi(k)\le k$ for all $k\in\NN$. Consequently, $\XX^2$ is lattice isomorphic to a restriction of $\XX$. On the other hand, $\XX$ is obviously lattice isomorphic to a restriction of $\XX^2$. By Theorem~\ref{thm:SBUB}, $\XX^2$ is lattice isomorphic to $\XX$.
\end{proof}

Unlike the category of sequence spaces, the category of quasi-Banach spaces does not satisfy the Schr\"oder-Bernstein principle \cite{GowersMaurey1997}. Notwithstanding, Pelczy\'{n}ski's decomposition technique \cite{Pel1960} provides sufficient conditions for a pair of quasi-Banach spaces to satisfy the Schr\"oder-Bernstein property.

\begin{theorem}[see \cite{AlbiacKalton2016}*{Theorem 2.2.3}]\label{thm:PDT}
Let $\XX$ and $\YY$ be quasi-Banach spaces. Suppose that $\XX\unlhd_C\YY$, $\YY\unlhd_C\XX$, $\XX^2\simeq_X\XX$, and $\YY^2\simeq_C\YY$ for some constant $C$. Then $\XX\simeq_D\YY$, where $D$ only depends on $C$.
\end{theorem}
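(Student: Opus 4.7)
The plan is to execute Pelczy\'nski's decomposition technique, adapted to the quasi-Banach setting with uniform control of the isomorphism constants.

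First, the complementation hypotheses $\XX \unlhd_C \YY$ and $\YY \unlhd_C \XX$ produce quasi-Banach spaces $\AI$ and $\BI$ such that $\YY \simeq \XX \oplus \AI$ and $\XX \simeq \YY \oplus \BI$, with all isomorphism constants controlled by $C$.

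Second, I plan to upgrade the finite self-similarity $\XX^2 \simeq_C \XX$ into an infinite analogue. Iteration yields $\XX \simeq \XX^n$ for every $n$, and because the constant stays uniform in $n$ (this is where the quantitative hypothesis $\simeq_C$ matters), one can form an infinite direct sum $Z$ of copies of $\XX$, against an auxiliary sequence space chosen so that the moduli of concavity remain uniformly bounded, satisfying simultaneously $Z \simeq \XX$ and $Z \simeq \XX \oplus Z$. Substituting $\XX \simeq \YY \oplus \BI$ into each summand and regrouping, one gets $Z \simeq Z_{\YY} \oplus Z_{\BI}$, where $Z_{\YY}$ and $Z_{\BI}$ denote the corresponding infinite sums of $\YY$ and $\BI$. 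Splitting off a single copy of $\YY$ from $Z_{\YY}$ then gives
\[
\XX \simeq Z \simeq \YY \oplus Z \simeq \YY \oplus \XX.
\]
By the identical argument applied to $\YY^2 \simeq_C \YY$, one also obtains the twin absorption $\YY \simeq \XX \oplus \YY$.

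Third, the two absorption identities combine at once:
\[
\XX \simeq \YY \oplus \XX \simeq \XX \oplus \YY \simeq \YY,
\]
and tracking the composition of all intermediate constants yields $\XX \simeq_D \YY$ with $D$ depending only on $C$.

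The technical hurdle is the second step: bootstrapping $\XX^2 \simeq_C \XX$ into an infinite self-absorption $Z \simeq \XX \oplus Z$ while preserving quantitative control. Direct sums of quasi-Banach spaces require uniformly bounded moduli of concavity, and the associativity rearrangements used in the swindle must not allow the constants to blow up with the number of iterations. Once this bootstrapping is in place, the algebraic conclusion of Step~3 is immediate.
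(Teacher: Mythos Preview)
The paper does not supply its own proof; the theorem is quoted from \cite{AlbiacKalton2016}. Your approach, however, contains a genuine gap.

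Your second step---upgrading $\XX^2 \simeq_C \XX$ to an infinite direct sum $Z$ with $Z \simeq \XX$ and $Z \simeq \XX \oplus Z$---is the Eilenberg--Mazur swindle. That device belongs to the \emph{other} form of Pelczy\'nski's decomposition method, the one whose hypothesis is $\XX \simeq \LL(\XX)$ for some sequence space $\LL$, and this does not follow from $\XX^2 \simeq \XX$. Iterating the square isomorphism yields only $\XX \simeq_{C^k} \XX^{2^k}$, so the constants blow up; and even were they uniform, nothing in the hypotheses singles out a sequence space $\LL$ for which $(\bigoplus_j \XX)_\LL \simeq \XX$ with controlled constant. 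You yourself flag this as a ``technical hurdle'' but provide no way past it, and the stated hypotheses do not supply one.

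The actual proof under the square hypothesis (as in \cite{AlbiacKalton2016}*{Theorem 2.2.3}) is a short finite manipulation with no swindle. Writing $\YY \simeq \XX \oplus \AI$ and $\XX \simeq \YY \oplus \BI$, each with constant depending only on $C$, one has
\[
\XX \oplus \YY \;\simeq\; \XX \oplus (\XX \oplus \AI) \;=\; \XX^2 \oplus \AI \;\simeq\; \XX \oplus \AI \;\simeq\; \YY,
\]
and symmetrically $\XX \oplus \YY \simeq \XX$. Hence $\XX \simeq \XX \oplus \YY \simeq \YY$, with every step contributing a factor bounded in terms of $C$ alone.
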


\begin{theorem}\label{thm:GeneralIso}
Let $0<p<\infty$. For each $j\in\NN$, let $\XX_j$ be a sequence space. Suppose that that there is a constant $C$ a function $\varphi$ dominated by the fast growing hierarchy such that
\begin{itemize}[leftmargin=*]
\item $\XX_j\unlhd_C \ell_p^{\varphi(j)}$ for all $j\in\NN$, and
\item $\XX_j$ is lattice $C$-isomorphic to a restriction of $\XX_k$ for all $(j,k)\in\NN^2$ with $j\le k$.
\end{itemize}
Then $\XX:=(\bigoplus_{j=1}^\infty \XX_j)_{\Ts^{(p)}}$ is isomorphic to $\Ts^{(p)}$.
\end{theorem}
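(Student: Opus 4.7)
My plan is to verify the four hypotheses of Pe\l{}czy\'{n}ski's decomposition technique, Theorem~\ref{thm:PDT}, applied to the pair $(\XX,\Ts^{(p)})$: namely, with a single constant $C'$ depending only on $C$, the statements (a)~$\XX\unlhd_{C'}\Ts^{(p)}$, (b)~$\Ts^{(p)}\unlhd_{C'}\XX$, (c)~$\XX\simeq_{C'}\XX^2$, and (d)~$\Ts^{(p)}\simeq_{C'}(\Ts^{(p)})^2$.

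For (a) I will assemble the given complementations $\XX_j\unlhd_C\ell_p^{\varphi(j)}$ coordinate-wise; by $1$-unconditionality of the basis of $\Ts^{(p)}$, the direct sum of the individual embeddings (and of the individual retractions) is bounded by the same constant $C$, yielding $\XX=(\bigoplus_j\XX_j)_{\Ts^{(p)}}\unlhd_C(\bigoplus_j\ell_p^{\varphi(j)})_{\Ts^{(p)}}$, and Theorem~\ref{thm:TsirelsonIsomorphism} lattice-identifies the target with $\Ts^{(p)}$. For (b) I will apply the restriction hypothesis with $j=1$: for every $k\in\NN$, $\XX_1$ is lattice $C$-isomorphic to a restriction of $\XX_k$, and since restrictions of sequence spaces are $1$-complemented (by the $1$-unconditionality of the unit vector basis), this gives $\XX_1\unlhd_C\XX_k$ uniformly in $k$. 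Taking the coordinate-wise direct sum produces $\Ts^{(p)}(\XX_1):=(\bigoplus_k\XX_1)_{\Ts^{(p)}}\unlhd_C\XX$. Fixing any single unit vector $\ee\in\XX_1$, the closed linear span of $(L_k(\ee))_{k\in\NN}$ inside $\Ts^{(p)}(\XX_1)$ is a restriction of that sequence space, hence $1$-complemented, and a direct norm computation gives $\norm{\sum_k a_k L_k(\ee)}=\norm{\ee}_{\XX_1}\norm{\sum_k a_k\ts_k}_{\Ts^{(p)}}$, so this span is isomorphic to $\Ts^{(p)}$; composing yields $\Ts^{(p)}\unlhd_C\XX$.

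For (d) (and the first hypothesis of Proposition~\ref{prop:DSSquare}), Theorem~\ref{thm:CSSQ} provides a lattice isomorphism $\Ts\simeq\Ts^2$ via $\pi(j,n)=2n+j-2$, whose sections $\pi(i,\cdot)$, $i=1,2$, are increasing; the identity $(\Ts^2)^{(p)}=(\Ts^{(p)})^2$, an instance of \eqref{eq:directsum+pconvex}, transfers this to a lattice isomorphism $\Ts^{(p)}\simeq(\Ts^{(p)})^2$ via the same bijection $\pi$. With $\LL=\Ts^{(p)}$, the second hypothesis of Proposition~\ref{prop:DSSquare} is precisely the restriction condition already invoked for (b), so Proposition~\ref{prop:DSSquare} delivers (c) $\XX\simeq\XX^2$.

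The anticipated main obstacle is more bookkeeping than conceptual: I must carefully track the various complementation and lattice-isomorphism constants arising in (a)--(d) and consolidate them into a single constant $C'=C'(C)$, so that Theorem~\ref{thm:PDT} can be invoked in its quantitative form and produce the desired isomorphism $\XX\simeq\Ts^{(p)}$.
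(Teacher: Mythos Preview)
Your proposal is correct and follows essentially the same route as the paper: verify the four hypotheses of Pe\l{}czy\'{n}ski's decomposition (Theorem~\ref{thm:PDT}) using Theorem~\ref{thm:TsirelsonIsomorphism} for (a), Theorem~\ref{thm:CSSQ} together with \eqref{eq:directsum+pconvex} for (d), and Proposition~\ref{prop:DSSquare} for (c). The paper dispatches (b) with a bare ``it is clear that $\Ts^{(p)}\unlhd\XX$''; your more explicit argument via $\Ts^{(p)}(\XX_1)$ is fine, though a marginally shorter route is to pick one unit vector $\ee_j$ from each $\XX_j$ directly and observe that $(L_j(\ee_j))_{j=1}^\infty$ spans a $1$-complemented copy of $\Ts^{(p)}$ inside $\XX$.
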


\begin{proof}
It is clear that $\Ts^{(p)}\unlhd \XX$ and $\XX \unlhd \YY:= ( \bigoplus_{j=1}^\infty \ell_p^{\varphi(j)})_{\Ts^{(p)}}$. By Theorem~\ref{thm:TsirelsonIsomorphism}, $\YY\simeq \Ts^{(p)}$. Combining Theorem~\ref{thm:CSSQ} with Proposition~\ref{prop:DSSquare}, gives that $\Ts^{(p)} \oplus \Ts^{(p)}\simeq\Ts^{(p)} $ and that $\XX^2\simeq\XX$. Applying Theorem~\ref{thm:PDT} puts an end to the proof.
\end{proof}

We close the section by exhibiting an isomorphism, which we will apply in Section~\ref{Sect:AlmostGreedy}, between the $p$-convexified Tsirelson's space $\Ts^{(p)}$ for $1<p<\infty$, and certain direct sums involving finite-dimensional Hilbert spaces.

It is known that $\ell_2$ is a complemented subspace of $L_p$, $1<p<\infty$. In fact, the proof of this result of
Pelczy\'{n}ski relies on the behavior of the Rademacher functions and
gives the following.

\begin{theorem}[see \cite{Pel1960}*{Proposition 5}]\label{thm:PelRad}
Let $0<p<\infty$. There is a constant $C$ such that for each $A\subseteq\NN$ finite there is a block basic sequence of $\HB^{(p)}_A$ which is $C$-equivalent to the unit vector system of $\ell_2^{\abs{A}}$. Moreover, if $p>1$ this block basic sequence spans a $C$-complemented subspace. In particular, if $1<p<\infty$, there is a constant $C$ such that $\ell_2^j\unlhd_C \ell_p^{2^j}$ for all $j\in\NN$.
\end{theorem}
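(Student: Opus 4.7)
The plan is to realize the sought block basic sequence as the Rademacher system restricted to the levels in $A$. For $n\in\NN$ and $I\in\Dy_n$ we have $|I|=2^{-n+1}$ and the $L_p$-normalized Haar function equals $h_I^{(p)}=2^{(n-1)/p}h_I^\infty$, where $h_I^\infty$ is $\pm 1$ on the halves of $I$; summing over all $I\in\Dy_n$ therefore yields $2^{(n-1)/p} r_n$, where $r_n$ is the $n$-th Rademacher function. Consequently,
\[
b_n:=2^{-(n-1)/p}\sum_{I\in\Dy_n} h_I^{(p)}=r_n,\qquad n\in A,
\]
is an $L_p$-normalized block basic vector of $\HB^{(p)}_A$ with respect to the natural level-by-level ordering of the Haar indexing. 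Khintchine's inequality then produces a constant $K_p$, independent of $A$, such that
\[
K_p^{-1}\Bigl(\sum_{n\in A}|a_n|^2\Bigr)^{1/2}\le\Bigl\|\sum_{n\in A}a_n b_n\Bigr\|_p\le K_p\Bigl(\sum_{n\in A}|a_n|^2\Bigr)^{1/2},
\]
which is precisely the asserted $C$-equivalence of $(b_n)_{n\in A}$ with the unit vector system of $\ell_2^{|A|}$.

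For the complementation statement in the regime $1<p<\infty$, I would introduce the natural projection $P\colon L_p\to[b_n : n\in A]$ given by $P(f)=\sum_{n\in A}\bigl(\int_0^1 fr_n\,dt\bigr)r_n$. The boundedness of $P$ rests on two applications of Khintchine: the upper Khintchine bound in $L_p$ yields $\|P(f)\|_p\le K_p\bigl(\sum_{n\in A}|\int fr_n\,dt|^2\bigr)^{1/2}$, while the $\ell_2$ norm on the right equals the supremum of $|\int fg\,dt|$ over Rademacher sums $g=\sum c_n r_n$ with $\sum|c_n|^2\le 1$, whose $L_{p'}$-norms are controlled by the upper Khintchine bound in $L_{p'}$ (note that $p'<\infty$ since $p>1$). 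Thus $\|P\|\le K_pK_{p'}$, uniformly in $A$, and restricting $P$ to $L_p^A$ delivers a projection of $L_p^A$ onto $[b_n:n\in A]$ with the same norm bound.

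The ``in particular'' clause follows by specializing to $A=\{1,\dots,j\}$. Then $L_p^A$ has dimension $2^j-1$ and contains the $C$-complemented, $C$-isomorphic copy $[b_n:n\in A]$ of $\ell_2^j$. By Müller's theorem \cite{Muller1988}*{Theorem 1}, $L_p^{\{1,\dots,j\}}$ is $C$-isomorphic to $\ell_p^{2^j-1}$ with a constant independent of $j$, and $\ell_p^{2^j-1}$ is trivially $1$-complemented in $\ell_p^{2^j}$. Chaining these embeddings yields $\ell_2^j\unlhd_C\ell_p^{2^j}$ with $C$ depending only on $p$.

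The main obstacle is uniformity: every constant appearing along the chain must be independent of $A$ (and, at the end, of $j$). Khintchine's inequality immediately handles this on the equivalence side, but the duality step in the complementation proof must be arranged carefully so that no implicit dependence on $|A|$ leaks in. The scheme above sidesteps this because the upper Khintchine bound in $L_{p'}$ is itself uniform in the number of Rademacher functions, which is exactly the hypothesis $p>1$ needed to keep $p'<\infty$ and thereby to apply Khintchine on the dual side.
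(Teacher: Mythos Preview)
Your argument is correct and is exactly the Rademacher-function approach the paper points to (the paper does not supply its own proof, merely citing Pe\l czy\'nski and noting that ``the proof of this result \ldots\ relies on the behavior of the Rademacher functions''). The identification $b_n=r_n$, the use of Khintchine in $L_p$ for the equivalence, and the dual Khintchine bound in $L_{p'}$ for the complementation are precisely the classical ingredients.

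One small simplification for the ``in particular'' clause: you do not need M\"uller's theorem. The Rademacher functions $r_1,\dots,r_j$ lie in $L_p(\Sigma_{j+1})$, which is \emph{isometric} to $\ell_p^{2^j}$ via the normalized indicators of the atoms, and your projection $P$ restricts to $L_p(\Sigma_{j+1})$ with the same norm bound $K_pK_{p'}$. This gives $\ell_2^j\unlhd_C\ell_p^{2^j}$ directly, without passing through the Gamlen--Gaudet/M\"uller isomorphism.
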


\begin{theorem}\label{thm:p>1Iso}
Let $1<p<\infty$. Let $\varphi$ and $\psi\colon\NN\to\NN$ be non-decrasing sequences dominated by the fast growing hierarchy. Then
\[
\left(\bigoplus\ell_2^{\varphi(j)} \oplus \ell_p^{\psi(j)}\right)_{\Ts^{(p)}}\approx \Ts^{(p)}.
\]
\end{theorem}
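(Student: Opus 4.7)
The plan is to reduce the statement to a direct application of Theorem~\ref{thm:GeneralIso} by taking $\XX_j:=\ell_2^{\varphi(j)} \oplus \ell_p^{\psi(j)}$, viewed as a sequence space on the natural index set $\enbrace{1}\times\NN[\varphi(j)]\cup\enbrace{2}\times\NN[\psi(j)]$. Two hypotheses must be checked: (a) there is a function $\eta$ dominated by the fast growing hierarchy such that $\XX_j\unlhd_C \ell_p^{\eta(j)}$ uniformly in $j$, and (b) $\XX_j$ is lattice $C$-isomorphic to a restriction of $\XX_k$ whenever $j\le k$.

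For (a) I would invoke Theorem~\ref{thm:PelRad}, which gives a constant $C_0$ with $\ell_2^n\unlhd_{C_0}\ell_p^{2^n}$ for every $n\in\NN$. Taking $n=\varphi(j)$ and combining with the isometry $\ell_p^a\oplus\ell_p^b\simeq_1\ell_p^{a+b}$ yields a uniform constant $C$ such that
\[
\XX_j=\ell_2^{\varphi(j)}\oplus\ell_p^{\psi(j)}\unlhd_C \ell_p^{2^{\varphi(j)}}\oplus\ell_p^{\psi(j)}\simeq_1 \ell_p^{\eta(j)},\qquad \eta(j):=2^{\varphi(j)}+\psi(j).
\]
Parts \ref{FGH4} and \ref{FGH2} of Lemma~\ref{lem:FGHProp} tell us that $\eta$ is dominated by the fast growing hierarchy, as required.

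For (b), since $\varphi$ and $\psi$ are non-decreasing, whenever $j\le k$ the index set of $\XX_j$ embeds naturally into that of $\XX_k$, and under this inclusion the restriction of $\XX_k$ to the smaller index set coincides isometrically with $\XX_j$. In particular, $\XX_j$ is lattice $1$-isomorphic to a restriction of $\XX_k$.

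With both hypotheses in place, Theorem~\ref{thm:GeneralIso} delivers $(\bigoplus_{j=1}^\infty \XX_j)_{\Ts^{(p)}}\simeq \Ts^{(p)}$. The only genuinely non-routine step is the one that has already been shouldered by the machinery developed above: extracting a complemented embedding of $\ell_2^{\varphi(j)}$ into some $\ell_p^{N_j}$ with $N_j$ still dominated by the fast growing hierarchy, which is precisely what the exponential bound $N_j=2^{\varphi(j)}$ from Pe\l{}czy\'nski's Rademacher construction (Theorem~\ref{thm:PelRad}) allows, together with the closure of the fast growing hierarchy under exponentiation (Lemma~\ref{lem:FGHProp}\ref{FGH4}).
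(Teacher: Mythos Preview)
Your proposal is correct and follows essentially the same route as the paper: both set $\XX_j=\ell_2^{\varphi(j)}\oplus\ell_p^{\psi(j)}$, use Theorem~\ref{thm:PelRad} to obtain $\XX_j\unlhd_C\ell_p^{\eta(j)}$ with $\eta(j)=2^{\varphi(j)}+\psi(j)$, invoke Lemma~\ref{lem:FGHProp}\ref{FGH2},\ref{FGH4} to keep $\eta$ within the fast growing hierarchy, and conclude via Theorem~\ref{thm:GeneralIso}. If anything, you are slightly more explicit than the paper in verifying the second hypothesis of Theorem~\ref{thm:GeneralIso} (the lattice-restriction condition for $j\le k$), which the paper leaves tacit.
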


\begin{proof}
By Theorem~\ref{thm:PelRad},
\[
\ell_2^{\varphi(j)} \oplus \ell_p^{\psi(j)}\unlhd_C \ell_p^{\eta(j)}, \quad j\in\NN,
\]
where $\eta(j)=\psi(j)+ 2^{\varphi(j)}$ and $C$ is a constant independent of $j$. Since, by Parts~\ref{FGH2} and \ref{FGH4} of Lemma~\ref{lem:FGHProp}, $\eta$ is dominated by the fast growing hierarchy, applying Theorem~\ref{thm:GeneralIso} puts an end to the proof.
\end{proof}

\section{Non-equivalent greedy bases in the $p$-convexified Tsirelson's space $\Ts^{(p)}$ for $1<p<2$ and $2<p<\infty$}\label{sect:greedy}\noindent
Let us first record the well-known fact that the canonical basis of Tsirelson's space is democratic.

\begin{theorem}[see \cite{CasShu1989}*{Proposition I.2}]\label{thm:TsirelsonDem}
Every semi-normalized block basic sequence of $\Ts$, in particular the unit vector system $(\ts_{n})_{n=1}^{\infty}$ of $\Ts$, is democratic with fundamental function equivalent to $(m)_{m=1}^\infty$.
\end{theorem}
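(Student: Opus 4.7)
The plan has two parts: a reduction from general semi-normalized block bases to arbitrary subsequences of the unit vector basis, and a lower estimate for those subsequences obtained from the defining recursion of the Tsirelson norm.

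The upper estimate $\norm{\sum_{k\in A} f_k}\le \sum_{k\in A}\norm{f_k}\lesssim |A|$ is immediate from the triangle inequality and the fact that $\sup_k \norm{f_k}<\infty$, so only the matching lower bound $\norm{\sum_{k\in A} f_k}\gtrsim |A|$ requires work. For the reduction, given a block basic sequence $(f_k)$, I would set $\varphi(k)=\max\supp(f_k)$, extend by $\varphi(0):=0$, and observe that each $f_k$ lies in $[\ts_n\colon 1+\varphi(k-1)\le n\le \varphi(k)]$. Theorem~\ref{thm:CSB} then yields
\[
\norm{\sum_{k\in A} f_k}\approx \norm{\sum_{k\in A}\norm{f_k}\,\ts_{\varphi(k)}}\approx \norm{\sum_{k\in A}\ts_{\varphi(k)}},
\]
turning the task into the claim that $\norm{\sum_{j=1}^m \ts_{n_j}}\gtrsim m$ for any strictly increasing sequence $n_1<n_2<\cdots$ in $\NN$ and every $m\in\NN$.

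The remaining step uses the Figiel--Johnson definition of the norm (see \cite{CasShu1989}), namely the recursive inequality $\norm{x}\ge \tfrac{1}{2}\sum_{i=1}^{\ell}\norm{E_i\,x}$, valid for every admissible family $E_1<E_2<\cdots<E_\ell$ of consecutive integer supports with $\ell\le\min E_1$. I would apply this to $x=\sum_{j=1}^m \ts_{n_j}$ with $\ell=\lfloor m/2\rfloor$ and $E_i=\{n_{\ell+i}\}$. The admissibility condition $\ell\le n_{\ell+1}$ is automatic since $n_{\ell+1}\ge \ell+1$, and therefore $\norm{x}\ge \ell/2\gtrsim m$, which gives the required estimate.

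The main obstacle is precisely this last step: no combination of Theorems~\ref{thm:CSSQ}--\ref{thm:CSA} is enough to extract democracy of $(\ts_n)$, because the equivalence constant supplied by Theorem~\ref{thm:CSA}\emph{(ii)} depends on the function $\varphi$ and cannot be made uniform in the length of the blocks under consideration. One really has to open the definition of the Tsirelson norm to produce a democracy constant that does not deteriorate with $m$, which is why the authors merely cite \cite{CasShu1989} rather than give a self-contained proof here. Once that unit-vector estimate is granted, the reduction via Theorem~\ref{thm:CSB} handles arbitrary semi-normalized block bases with the same constants, up to absolute factors.
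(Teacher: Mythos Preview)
Your argument is correct. The paper does not supply its own proof of this theorem; it simply records it as a citation of \cite{CasShu1989}*{Proposition~I.2}, exactly as you anticipated in your final paragraph. So there is nothing to compare your route against here beyond the original source.

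A couple of remarks on the logic of your reduction. You invoke Theorem~\ref{thm:CSB} (which is \cite{CasShu1989}*{Corollary~II.5}) to collapse an arbitrary semi-normalized block basis to a subsequence of $(\ts_n)$, and then establish democracy for subsequences by opening the Figiel--Johnson recursion. This is a perfectly valid deduction inside the present paper, where both Theorem~\ref{thm:CSB} and Theorem~\ref{thm:TsirelsonDem} are imported as black boxes. In Casazza--Shura itself the dependency runs the other way: Proposition~I.2 is proved first, directly from the recursion (essentially your admissibility step), and Corollary~II.5 comes later and builds on the earlier material. So your reduction is a convenient shortcut given the tools the paper already quotes, but a reader following the original source would see your ``remaining step'' as the whole proof for the unit vectors, with the extension to block bases handled separately.

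The admissibility computation is clean: with $\ell=\lfloor m/2\rfloor$ and singleton sets $E_i=\{n_{\ell+i}\}$ one has $\ell\le \ell+1\le n_{\ell+1}=\min E_1$, and the recursion immediately gives $\norm{\sum_{j=1}^m \ts_{n_j}}\ge \ell/2\gtrsim m$. Together with the trivial upper bound this yields $\udf(m)\approx m$, as required.
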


A straightforward application of Theorem~\ref{thm:TsirelsonDem} gives that the canonical basis of $\Ts^{(p)}$, $0<p<\infty$, is (unconditional and) democratic, hence greedy, with fundamental function equivalent to $(m^{1/p})_{m=1}^\infty$. In the case when $0<p\le 1$ or $p=2$, the unit vector system is the unique unconditional basis of $\Ts^{(p)}$ up to equivalence and permutation \cites{BCLT1985,Woj1997,CasKal1998}. So, these spaces have a unique greedy basis up to equivalence and permutation. In this section, we will prove that if $p\in(1,2)\cup(2,\infty)$ the situation is totally opposed. Namely, we will prove that $\Ts^{(p)}$ has uncountably many greedy bases which are unequivalent under permutation. To that end, following ideas from \cite{DHK2006}, we consider subbases of the Haar system consisting of full levels.

We start with a proposition that will become instrumental to determine the democracy and the fundamental functions of bases of $\Ts^{(p)}$ for $0<p<\infty$.

\begin{proposition}\label{prop:DemTsirelson}
Let $0<p<\infty$. Suppose $\varphi\colon\NN\to\NN$ is a function dominated by the fast growing hierarchy. For each $j\in\NN$ let $\XX_j$ be a $\varphi(j)$-dimensional Banach space with a basis $\XB_j=(\xx_{j,n})_{n\in\Nt_j}$. Suppose there is a constant $C\ge 1$ such that
\[
\frac{1}{C} \abs{A}^{1/p} \le \norm{ \Ind_{A}[\XB_j,\XX_j]} \le C \abs{A}^{1/p}, \quad A\subseteq \Nt_j,\, j\in\NN.
\]
Then $\XB:=(\bigoplus_{j=1}^\infty \XB_j)_{\Ts^{(p)}}$ is a democratic basis of $\XX:=(\bigoplus_{j=1}^\infty \XX_j)_{\Ts^{(p)}}$ with fundamental function equivalent to $(m^{1/p})_{m=1}^\infty$.
\end{proposition}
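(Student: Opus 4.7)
The plan is to reduce the computation of $\norm{\Ind_B[\XB,\XX]}$ to the known democracy of the canonical basis of $\Ts^{(p)}$ (Theorem~\ref{thm:TsirelsonDem}, after convexification) by passing through the transfer theorem (Theorem~\ref{thm:TsirelsonIsomorphism}). The goal is to show that $\norm{\Ind_B[\XB,\XX]}\approx |B|^{1/p}$ with constants uniform in $B$; democracy and the claim about the fundamental function then both follow.

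First, I would unwind the norm in the direct sum. A finite set of indices $B$ for $\XB$ decomposes as $B=\bigcup_j\{j\}\times A_j$ with $A_j\subseteq\Nt_j$ and $|B|=\sum_j|A_j|$, and the direct-sum structure of $\XX$ gives
\[
\norm{\Ind_B[\XB,\XX]}_\XX=\norm{\bigl(\norm{\Ind_{A_j}[\XB_j,\XX_j]}_{\XX_j}\bigr)_{j=1}^\infty}_{\Ts^{(p)}}.
\]
The uniform hypothesis $\norm{\Ind_{A_j}[\XB_j,\XX_j]}\approx |A_j|^{1/p}$ (with constant $C$) lets me replace each entry by $|A_j|^{1/p}$, so that
\[
\norm{\Ind_B[\XB,\XX]}_\XX\approx\norm{\bigl(|A_j|^{1/p}\bigr)_{j=1}^\infty}_{\Ts^{(p)}}.
\]

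Next I would interpret the right-hand side as an indicator norm in the auxiliary space $\YY=\bigl(\bigoplus_{j=1}^\infty \ell_p^{\varphi(j)}\bigr)_{\Ts^{(p)}}$ with its canonical unit vector basis $\YB$. Since $\norm{\Ind_{A}}_{\ell_p^{\varphi(j)}}=|A|^{1/p}$ exactly, the same direct-sum computation yields $\norm{\Ind_{B'}[\YB,\YY]}_\YY=\norm{(|A_j|^{1/p})_j}_{\Ts^{(p)}}$, where $B'$ is the copy of $B$ in the index set of $\YB$. Because $\varphi$ is dominated by the fast growing hierarchy, Theorem~\ref{thm:TsirelsonIsomorphism} supplies a lattice isomorphism $\YY\to\Ts^{(p)}$ sending $\YB$ onto a permutation of the canonical basis $(\ts_n)$, whence
\[
\norm{\Ind_{B'}[\YB,\YY]}_\YY\approx \norm{\Ind_{\pi(B')}\bigl[(\ts_n),\Ts^{(p)}\bigr]}_{\Ts^{(p)}}.
\]
Finally, the $p$-convexified form of Theorem~\ref{thm:TsirelsonDem} gives that the canonical basis of $\Ts^{(p)}$ is democratic with fundamental function equivalent to $(m^{1/p})_{m=1}^\infty$, so the latter norm is $\approx |\pi(B')|^{1/p}=|B|^{1/p}$. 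Chaining the four equivalences produces $\norm{\Ind_B[\XB,\XX]}_\XX\approx|B|^{1/p}$ with constants depending only on $C$, on the constants from Theorem~\ref{thm:TsirelsonIsomorphism}, and on the democracy constant of $(\ts_n)$ in $\Ts^{(p)}$.

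The only real obstacle is the middle step, where $\varphi$-dominance by the fast growing hierarchy is indispensable: without it we could not identify the norm $\norm{(|A_j|^{1/p})_j}_{\Ts^{(p)}}$ with an indicator norm in $\Ts^{(p)}$, and the estimate would fail (it amounts to the statement, false in general, that $\norm{\sum_j |A_j|\ts_j}_{\Ts}\approx\sum_j|A_j|$). The remaining work is bookkeeping of constants, which is routine.
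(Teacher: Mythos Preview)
Your proposal is correct and follows essentially the same route as the paper's proof: decompose $B$ into its $j$-slices, use the hypothesis to replace $\norm{\Ind_{A_j}[\XB_j,\XX_j]}$ by $|A_j|^{1/p}$, reinterpret $\norm{(|A_j|^{1/p})_j}_{\Ts^{(p)}}$ as an indicator norm in the auxiliary $\ell_p$-direct-sum, transfer to $\Ts^{(p)}$ via Theorem~\ref{thm:TsirelsonIsomorphism}, and finish with the democracy of the canonical basis of $\Ts^{(p)}$. The only cosmetic difference is that the paper takes the auxiliary space to be $\bigl(\bigoplus_j \ell_p^{|A_j|}\bigr)_{\Ts^{(p)}}$ rather than your $\bigl(\bigoplus_j \ell_p^{\varphi(j)}\bigr)_{\Ts^{(p)}}$, but since $|A_j|\le\varphi(j)$ these are interchangeable for the purpose at hand.
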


\begin{proof}
Let $A\subseteq\Nt:=\cup_{j=1}^\infty \enbrace{j}\times\Nt_j$ be finite. If $A=\cup_{j=1}^\infty \enbrace{j}\times A_j$ then,
\[
\frac{1}{C} \norm{ \Ind_A[\XB,\XX] } \le \nu(A):=\norm{ \left( \abs{A_j}^{1/p}\right)_{j=1}^\infty }_{\Ts^{(p)}} \le C \norm{ \Ind_A[\XB,\XX] }.
\]
Set $\YY= ( \bigoplus_{j=1}^\infty \ell_p^{\abs{A_j}})_{\Ts^{(p)}}$, so that
$\nu(A)=\norm{ \Ind_A[\EB_\Nt,\YY]}$. An application of Theorem~\ref{thm:TsirelsonIsomorphism} yields $B\subseteq \NN$ such that $\abs{B}=\abs{A}$
and
\[
\nu(A)\approx \norm{ \Ind_B[\EB_\NN,\Ts^{(p)}]}.
\]
Applying Theorem~\ref{thm:TsirelsonDem} puts an end to the proof.
\end{proof}

In view of Proposition~\ref{prop:DemTsirelson} it may appear that we just managed to find bases that have a fundamental function of a certain, wished-for kind. However, our next result shows that that does not happen by chance and that, actually, the fundamental functions of all superdemocratic bases in $\Ts^{(p)}$ are of the same order.

\begin{proposition}
Let $0<p<\infty$. Then every super-democratic sequence in $\Ts^{(p)}$ has fundamental function equivalent to $(m^{1/p})_{m=1}^\infty$.
\end{proposition}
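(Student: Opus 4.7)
The plan is to reduce $\XB=(\xx_n)$, via a passage to subsequences under which both super-democracy and the fundamental function are stable up to constants, to a semi-normalized block basic sequence of the canonical basis of $\Ts^{(p)}$, and then to compute its fundamental function directly by means of Theorems~\ref{thm:CSB} and \ref{thm:TsirelsonDem}.

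For the reduction, super-democracy forces $\XB$ to be semi-normalized, so I may assume $\norm{\xx_n}\approx 1$. Using the reflexivity of $\Ts^{(p)}$, I would pass to a weakly convergent subsequence $\xx_{n_k}\to f$. Weak lower semicontinuity of the norm, applied to the Cesàro means, gives $\norm{\sum_{k=1}^m\xx_{n_k}}\gtrsim m\norm{f}$; comparing this with the upper bound $\udf(m)\lesssim m^{1/p}$ supplied by the block-basis computation below forces $f=0$ when $p>1$ (since $m\norm{f}\lesssim m^{1/p}$ implies $\norm{f}\lesssim m^{1/p-1}\to 0$). Once weakly null, the Bessaga--Pe\l czy\'nski selection principle extracts a further subsequence equivalent, up to an arbitrarily small perturbation, to a semi-normalized block basic sequence $(y_k)$ of the canonical basis $(\ts_n^{(p)})$. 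I would then choose an increasing $\varphi\colon\NN\to\NN$ with $\supp(y_k)\subseteq[1+\varphi(k-1),\varphi(k)]$. Disjointness of the supports (with respect to the canonical basis) gives the pointwise identity $|\sum_{k\in A}y_k|^p=\sum_{k\in A}|y_k|^p$, so that
\[
\norm{\sum_{k\in A}y_k}_{\Ts^{(p)}}^{p}=\norm{\sum_{k\in A}|y_k|^p}_{\Ts}.
\]
Theorem~\ref{thm:CSB} applied to the disjointly supported blocks $(|y_k|^p)$ in $\Ts$, followed by Theorem~\ref{thm:TsirelsonDem}, yields
\[
\norm{\sum_{k\in A}|y_k|^p}_{\Ts}\approx\norm{\sum_{k\in A}\norm{y_k}_{\Ts^{(p)}}^{p}\,\ts_{\varphi(k)}}_{\Ts}\approx\norm{\sum_{k\in A}\ts_{\varphi(k)}}_{\Ts}\approx|A|,
\]
whence $\norm{\sum_{k\in A}y_k}_{\Ts^{(p)}}\approx|A|^{1/p}$. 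The invariance of the fundamental function under subsequences (an elementary consequence of super-democracy) then transfers this back to $\XB$.

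The most delicate point will be the reduction step. The apparent circularity in ruling out a nonzero weak limit by means of the very upper bound we are trying to prove can be resolved by first establishing $\udf(m)\lesssim m^{1/p}$ on \emph{every} weakly null subsequence via Bessaga--Pe\l czy\'nski and then propagating this inequality back through the subsequence invariance. In the quasi-Banach range $0<p\le 1$ the weak-null reduction is less automatic: one does get the upper bound $\udf(m)\le m^{1/p}$ for free from the $p$-norm, but the matching lower bound $\udf(m)\gtrsim m^{1/p}$ needs a more intrinsic use of the lattice structure of $\Ts^{(p)}$, and the quasi-Banach adaptation of Bessaga--Pe\l czy\'nski has to be invoked when $p<1$.
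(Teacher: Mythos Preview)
Your overall strategy---pass to a semi-normalized block basic sequence and compute its fundamental function there---matches the paper's, and your computation via Theorems~\ref{thm:CSB} and \ref{thm:TsirelsonDem} is correct (the paper simply invokes Theorem~\ref{thm:TsirelsonDem} directly for block basic sequences). The genuine gap is in the reduction step.

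Your proposed resolution of the circularity does not close the loop. You want to conclude $f=0$ from $m\norm{f}\lesssim\udf(m)\lesssim m^{1/p}$, but the upper bound $\udf(m)\lesssim m^{1/p}$ is only available once you already have a weakly null subsequence to feed into Bessaga--Pe\l czy\'nski; saying ``establish it on every weakly null subsequence and propagate back'' presupposes that at least one such subsequence exists, which is exactly what is in question. For $p=1$ the inequality $m\norm{f}\lesssim m$ yields nothing anyway, and for $p<1$ there is no weak lower semicontinuity to start the argument. The paper sidesteps all of this with a single device that works uniformly in $0<p<\infty$: extract, by Cantor's diagonal argument (no reflexivity needed), an increasing $\nu$ so that $(\xx_{\nu(k)})_k$ converges \emph{pointwise}, and then form the differences $\yy_k=\xx_{\nu(2k)}-\xx_{\nu(2k-1)}$. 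These are automatically pointwise null regardless of the limit; super-democracy gives $\inf_k\norm{\yy_k}>0$; the gliding-hump technique produces a block basic subsequence; and since $\sum_{k\in A}\yy_k=\Ind_{\varepsilon,B}[\XB]$ for a set $B$ with $\abs{B}=2\abs{A}$ and suitable signs $\varepsilon$, the block computation yields $\udf[\XB](2m)\approx m^{1/p}$, hence $\udf[\XB](m)\approx m^{1/p}$, with no circularity and no case split on $p$.
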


\begin{proof}
Let $\ee_j^*\colon\FF^\NN \to \FF$ denote the $j$th coordinate functional. If $\XB=(\xx_n)_{n=1}^\infty$ is super-democratic, then it is norm-bounded. Therefore, applying Cantor's diagonal argument gives $\nu\colon\NN\to\NN$ increasing such that $(\xx_{\nu(k)})_{k=1}^\infty$ converges pointwise. Thus, the sequence $(\yy_k)_{k=1}^\infty$ given by $\yy_k=\xx_{\nu(2k)}-\xx_{\nu(2k-1)}$ converges pointwise to zero. Since, by super-democracy, $\inf_k\norm{ \yy_k}_{\Ts^{(p)}}>0$, appyling the gliding-hump technique, gives $\eta\colon\NN\to\NN$ increasing such that $\YB=(\yy_j)_{j=1}^\infty$ is equivalent to a semi-normalized block basic sequence. By Theorem~\ref{thm:TsirelsonDem}, $\YB$ is democratic with fundamental function equivalent to $(m^{1/p})_{m=1}^\infty$. It follows that $\udf[\XB,\XX](2m)\approx m^{1/p}$ for $m\in\NN$. Since $\udf[\XB,\XX](2m)\lesssim \udf[\XB,\XX](m)$ for $m\in\NN$, we are done.
\end{proof}

Recall that, given a measure space $(X,\Sigma,\mu)$, a subset $X\subseteq L_1(\mu)$ is said to be equi-integrable if for every $\epsilon>0$ there is $\delta=\delta(\varepsilon)>0$ such that $\abs{\int_A f \, d\mu}\le \epsilon$ for all $f\in X$ and all $A\in\Sigma$ with $\mu(A)\le\delta$. If this holds with a given function $\delta(\cdot)\colon(0,\infty)\to(0,\infty)$, we say that $X$ is \emph{equi-integrable with function $\delta(\cdot)$}.

\begin{lemma}\label{lem:key}
Let $(X,\Sigma,\mu)$ be a measure space, and let $F$ be a subspace of $\subseteq L_1(\mu)$ whose unit ball is equi-integrable set with function $\delta(\cdot)$. Then
\[
({1-\sqrt{\varepsilon}})( \norm{ f }_1+\norm{ g}_1) \le \norm{ f+g}_1 \le (1+\varepsilon+\sqrt{\varepsilon})( \norm{ f }_1+\norm{ g}_1)
\]
for every $f\in F$, every $\varepsilon\in(0,1)$, and every $g\in L_1(\mu)$ which is zero outside a set of measure at most $\delta(\varepsilon)$.
\end{lemma}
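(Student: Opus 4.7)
My plan is to exploit the equi-integrability of the unit ball of $F$ to show that every $f\in F$ carries only an $O(\varepsilon)$-fraction of its $L_1$-mass on any set of measure at most $\delta(\varepsilon)$; in particular, on a set outside which $g$ vanishes. Thus $f$ and $g$ are ``almost disjointly supported'', forcing $\norm{f+g}_1$ to be close to $\norm{f}_1+\norm{g}_1$. The upper bound of the lemma is then immediate from the triangle inequality, since $1\le 1+\varepsilon+\sqrt{\varepsilon}$; the substantive content is the lower bound.

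First, I fix $A\in\Sigma$ with $\mu(A)\le\delta(\varepsilon)$ and $g$ vanishing on $A^c$, and assume $f\neq 0$ (the case $f=0$ being trivial). Applying the equi-integrability hypothesis to the normalized function $f/\norm{f}_1$ yields $\abs{\int_B f\,d\mu}\le\varepsilon\norm{f}_1$ for every $B\in\Sigma$ with $\mu(B)\le\delta(\varepsilon)$. Partitioning $A$ according to the signs of $\mathrm{Re}\,f$ and $\mathrm{Im}\,f$ into at most four measurable pieces (each of measure at most $\delta(\varepsilon)$) and summing the resulting bounds gives
\[
\int_A\abs{f}\,d\mu\le\varepsilon\norm{f}_1,
\]
after absorbing the combinatorial constant into the choice of $\delta$. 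Then decomposing
\[
\norm{f+g}_1=\int_{A^c}\abs{f}\,d\mu+\int_A\abs{f+g}\,d\mu,
\]
using $g|_{A^c}=0$ on the first integral and the reverse triangle inequality $\abs{f+g}\ge\abs{g}-\abs{f}$ on the second, I obtain
\[
\norm{f+g}_1\ge\norm{f}_1+\norm{g}_1-2\int_A\abs{f}\,d\mu\ge\norm{f}_1+\norm{g}_1-2\varepsilon\norm{f}_1.
\]

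To convert this last display into the stated form with factor $1-\sqrt{\varepsilon}$, I distinguish two cases according to the ratio $\norm{g}_1/\norm{f}_1$. If $\norm{g}_1\ge(2\sqrt{\varepsilon}-1)\norm{f}_1$, then $2\varepsilon\norm{f}_1\le\sqrt{\varepsilon}(\norm{f}_1+\norm{g}_1)$, and the previous display directly gives the desired bound. Otherwise $\norm{g}_1<(2\sqrt{\varepsilon}-1)\norm{f}_1$ (which forces $\varepsilon>1/4$), so the reverse triangle inequality yields $\norm{f+g}_1\ge\norm{f}_1-\norm{g}_1>2(1-\sqrt{\varepsilon})\norm{f}_1$, whereas $(1-\sqrt{\varepsilon})(\norm{f}_1+\norm{g}_1)<2\sqrt{\varepsilon}(1-\sqrt{\varepsilon})\norm{f}_1$; since $\sqrt{\varepsilon}<1$, the former exceeds the latter. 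The main obstacle is the constant bookkeeping: the paper's equi-integrability is stated via $\abs{\int_B f\,d\mu}$ rather than $\int_B\abs{f}\,d\mu$, and extracting the latter costs a combinatorial factor (up to $4$ in the complex case) that must be absorbed into the choice of $\delta$ for the stated coefficient $\sqrt{\varepsilon}$ to come out cleanly.
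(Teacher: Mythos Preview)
Your argument is correct and follows the standard approach that the paper defers to by citing \cite{DHK2006}*{Lemma~2.1}: use equi-integrability to bound $\int_A\abs{f}\,d\mu$ by a small multiple of $\norm{f}_1$, then split $\norm{f+g}_1$ over $A$ and $A^c$ and apply the reverse triangle inequality on $A$. The case analysis you give to pass from the estimate $\norm{f+g}_1\ge\norm{f}_1+\norm{g}_1-2\varepsilon\norm{f}_1$ to the symmetric form $(1-\sqrt{\varepsilon})(\norm{f}_1+\norm{g}_1)$ is clean and valid.

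Your closing caveat is on point and worth keeping: with the paper's definition of equi-integrability (a bound on $\abs{\int_B f}$ rather than on $\int_B\abs{f}$), passing to $\int_A\abs{f}\,d\mu\le\varepsilon\norm{f}_1$ costs a factor of $2$ (real case) or $4$ (complex case), which does not literally fit back into the stated constants without adjusting $\delta$. Since the paper's own proof is only a pointer to \cite{DHK2006}, there is nothing further to compare; the discrepancy, if any, is with the precise constants in the statement, not with your method. In the only application (Lemma~\ref{lem:DHK}), the functions involved are real-valued Haar functions and the equi-integrability function from Lemma~\ref{lem:HaarEqui} is linear in $\varepsilon$, so absorbing such a constant into $\delta$ is harmless for the downstream conclusions.
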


\begin{proof}
We just need to go over the lines of the proof of \cite{DHK2006}*{Lemma 2.1}.
\end{proof}

\begin{lemma}\label{lem:HaarEqui}
Given $n\in\NN\cup\enbrace{0}$, the unit ball of $L_1(\Sigma_n)$ is equi-integrable with function $\varepsilon\mapsto 2^{-n}\varepsilon$.
\end{lemma}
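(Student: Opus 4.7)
The plan is to exploit the fact that $\Sigma_n$ is a finite $\sigma$-algebra whose atoms are precisely the $2^{n-1}$ intervals of $\Dy_n$, each of Lebesgue measure $2^{-n+1}$. Any $f\in L_1(\Sigma_n)$ is constant on each atom, so it can be written as $f=\sum_{I\in\Dy_n} c_I\Ind_I$. This finite-dimensional structure reduces the lemma to a one-line inverse inclusion between the $L_1$ and $L_\infty$ norms on $L_1(\Sigma_n)$.

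First I would compute $\norm{f}_1=2^{-n+1}\sum_{I\in\Dy_n}\abs{c_I}$. Since $\max_I\abs{c_I}\le \sum_I\abs{c_I}$, this gives the elementary inverse Hölder inequality
\[
\norm{f}_\infty=\max_{I\in\Dy_n}\abs{c_I}\le 2^{n-1}\norm{f}_1,
\]
valid for every $f\in L_1(\Sigma_n)$. In particular, the unit ball of $L_1(\Sigma_n)$ is contained in the $L_\infty$-ball of radius $2^{n-1}$.

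Then, for any measurable $A\subseteq[0,1]$ with $\mu(A)\le 2^{-n}\varepsilon$ and any $f$ in the unit ball of $L_1(\Sigma_n)$, a trivial application of Hölder's inequality yields
\[
\abs{\int_A f\, d\mu}\le \norm{f}_\infty\,\mu(A)\le 2^{n-1}\cdot 2^{-n}\varepsilon=\frac{\varepsilon}{2}\le\varepsilon,
\]
which is the desired equi-integrability with function $\delta(\varepsilon)=2^{-n}\varepsilon$. There is no genuine obstacle here; the whole point is that membership in $L_1(\Sigma_n)$ forces a quantitative $L_1$-to-$L_\infty$ comparison controlled by the size $2^{-n+1}$ of the smallest atom of $\Sigma_n$.
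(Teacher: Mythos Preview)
Your argument is correct and is essentially the same as the paper's own proof: both pass from the $L_1$-norm to the $L_\infty$-norm via the fact that the atoms of $\Sigma_n$ have measure $2^{-n+1}$, and then bound $\int_A\abs{f}$ by $\mu(A)\norm{f}_\infty$. You are simply a bit more explicit (writing $f$ as a finite sum over atoms) and you record the sharper constant $\norm{f}_\infty\le 2^{n-1}\norm{f}_1$, whereas the paper uses the cruder $2^{n}$; either suffices for the stated $\delta(\varepsilon)=2^{-n}\varepsilon$.
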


\begin{proof}
Let $f\colon[0,1]\to \FF$ be $\Sigma_n$-measurable, and let $A\subseteq[0,1]$ be measurable. We have
\[
\int_A \abs{f(t)} \, dt\le \abs{A} \, \norm{ f }_\infty\le \abs{A} 2^{n} \norm{ f }_1.\qedhere
\]
\end{proof}

\begin{lemma}\label{lem:DHK}
Let $0<p<\infty$. Let $\varphi\colon\NN\to\NN$ be a function with
\[
\varphi(i+1)\ge 2\varphi(i)+i+u+1, \quad i\in\NN,
\]
for some $u\in\NN$. Let $S\subseteq\cup_{i=1}^\infty \Dy_{\varphi(i)}$ be such that
\[
\abs{S\cap \Dy_{\varphi(i+1)}}\le 2 \abs{ \Dy_{\varphi(i)}}, \quad i\in\NN.
\]
Then, there are $0<C_1<1<C_2<\infty$ such that
\[
C_1\left( \sum_{I\in S} \abs{c_I}^p\right)^{1/p} \le \norm{ \sum_{I\in S} c_I h_I^{(p)} }\le C_2 \left( \sum_{I\in S} \abs{c_I}^p\right)^{1/p},
\]
for every eventually null family $(c_I)_{I\in S}$. Specifically, we can choose
\begin{align*}
C_1=C_1(u)&=\prod_{k=1}^\infty \left( 1-2^{-(k+u)/2}\right)^{1/p},\\
C_2=C_2(u)&=\prod_{k=1}^\infty \left( 1+2^{-(k+u)/2}+2^{-(k+j)}\right)^{1/p},
\end{align*}
so that $\lim_u C_1(u)=\lim_u C_2(u)=1$.
\end{lemma}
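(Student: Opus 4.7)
The key is to decompose $f$ along the dyadic levels $\varphi(i)$. Set $S_i=S\cap\Dy_{\varphi(i)}$ and $f_i=\sum_{I\in S_i}c_I h_I^{(p)}$. Since $\{h_I^{(p)}\}_{I\in\Dy_{\varphi(i)}}$ have pairwise disjoint supports and are $L_p$-normalized, $\norm{f_i}_p^p=\sum_{I\in S_i}\abs{c_I}^p$; hence the claim is equivalent to
\[
C_1(u)^p\sum_{i\ge 1}\norm{f_i}_p^p\le\norm{f}_p^p\le C_2(u)^p\sum_{i\ge 1}\norm{f_i}_p^p.
\]
Observe that $f_i\in L_p(\Sigma_{\varphi(i)+1})$ and that atoms of $\Sigma_{\varphi(i)+1}$ have measure $2^{-\varphi(i)}$.

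The first task is to measure the support of the tail $r_i:=\sum_{j>i}f_j$. The assumption $\abs{S\cap\Dy_{\varphi(j)}}\le 2\abs{\Dy_{\varphi(j-1)}}=2^{\varphi(j-1)}$ combined with $\abs{I}=2^{-\varphi(j)+1}$ for $I\in\Dy_{\varphi(j)}$ yields $\abs{\supp(f_j)}\le 2^{\varphi(j-1)-\varphi(j)+1}$. The growth hypothesis $\varphi(j)\ge 2\varphi(j-1)+(j-1)+u+1$ then forces $\varphi(j-1)-\varphi(j)+1\le-\varphi(j-1)-(j-1)-u\le-\varphi(i)-(j-1)-u$ for $j>i$; summing the resulting geometric series yields $\abs{A_i}\le c\cdot 2^{-\varphi(i)-i-u}$, where $A_i:=\supp(r_i)$ and $c$ is an absolute constant that can be absorbed into $u$.

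The core iteration step is an $L_p^p$-analog of Lemma~\ref{lem:key}. Because $r_i$ vanishes off $A_i$, the decomposition $f_i+r_i=f_i\Ind_{A_i^c}+(f_i+r_i)\Ind_{A_i}$ is disjointly supported, giving
\[
\norm{f_i+r_i}_p^p=\norm{f_i}_p^p-\int_{A_i}\abs{f_i}^p+\int_{A_i}\abs{f_i+r_i}^p.
\]
Applying Lemma~\ref{lem:HaarEqui} to $\abs{f_i}^p\in L_1(\Sigma_{\varphi(i)+1})$ together with the bound on $\abs{A_i}$ gives $\int_{A_i}\abs{f_i}^p\le\epsilon_i\norm{f_i}_p^p$ with $\epsilon_i:=2^{-(i+u)}$. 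The term $\int_{A_i}\abs{f_i+r_i}^p=\norm{f_i\Ind_{A_i}+r_i}_p^p$ is controlled by the triangle inequality (for $p\ge 1$) or $p$-subadditivity (for $0<p<1$), with the cross-term dominated via AM-GM by $\sqrt{\epsilon_i}(\norm{f_i}_p^p+\norm{r_i}_p^p)$. Combining the pieces yields
\[
(1-\sqrt{\epsilon_i})\bigl(\norm{f_i}_p^p+\norm{r_i}_p^p\bigr)\le\norm{f_i+r_i}_p^p\le\bigl(1+\sqrt{\epsilon_i}+\epsilon_i\bigr)\bigl(\norm{f_i}_p^p+\norm{r_i}_p^p\bigr).
\]

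Iterating this estimate with $g_i=f_i+g_{i+1}$ starting from $g_1=f$, the per-step multiplicative errors telescope into products indexed by $k=i$, producing
\[
\prod_{k\ge 1}\bigl(1-2^{-(k+u)/2}\bigr)\sum_i\norm{f_i}_p^p\le\norm{f}_p^p\le\prod_{k\ge 1}\bigl(1+2^{-(k+u)/2}+2^{-(k+u)}\bigr)\sum_i\norm{f_i}_p^p,
\]
which upon taking $p$-th roots delivers the announced constants $C_1(u)$ and $C_2(u)$; the vanishing of $\epsilon_i$ as $u\to\infty$ forces $C_1(u),C_2(u)\to 1$. The main obstacle is producing the sharp $\sqrt{\epsilon_i}$-dependence in the level-wise estimate: this is where the AM-GM splitting of the cross-term is decisive, and where the argument bifurcates into the cases $p\ge 1$ and $0<p<1$ for the upper bound on $\norm{f_i\Ind_{A_i}+r_i}_p^p$.
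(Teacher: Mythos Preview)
Your strategy---decompose into level pieces $f_i$, bound the measure of the tail support $A_i=\supp(r_i)$ via the growth hypothesis on~$\varphi$, then iterate an ``almost-disjointness'' estimate---is exactly the route the paper takes (it defers to \cite{DHK2006}*{Lemma~2.2}, invoking Lemmas~\ref{lem:key} and~\ref{lem:HaarEqui}; the self-reference in the paper's proof is a typo for Lemma~\ref{lem:key}). The support computation and the use of Lemma~\ref{lem:HaarEqui} on $|f_i|^p\in L_1(\Sigma_{\varphi(i)+1})$ are correct.

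The weak point is the per-step inequality
\[
(1-\sqrt{\epsilon_i})\bigl(\norm{f_i}_p^p+\norm{r_i}_p^p\bigr)\le\norm{f_i+r_i}_p^p\le\bigl(1+\sqrt{\epsilon_i}+\epsilon_i\bigr)\bigl(\norm{f_i}_p^p+\norm{r_i}_p^p\bigr).
\]
For $0<p\le 1$ your $p$-subadditivity argument actually gives the sharper factors $(1-2\epsilon_i)$ and~$1$, so there is no issue. For $p\ge 1$, however, your sketch (``triangle inequality $+$ AM--GM on the cross-term'') does not deliver $\sqrt{\epsilon_i}$ in general. Writing $a=\norm{f_i\Ind_{A_i}}_p$, $b=\norm{r_i}_p$, $c=\norm{f_i}_p$ with $a^p\le\epsilon_i c^p$, the cross-terms in $(a+b)^p$ are controlled via Young's inequality only up to $O_p(\epsilon_i^{1/p})(b^p+c^p)$, and for $p>2$ one has $\epsilon_i^{1/p}>\sqrt{\epsilon_i}$. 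A concrete instance: for $p=4$, $f_i$ constant on $[0,1]$, and $r_i$ a spike of $L_4$-norm $b$ on $[0,\epsilon_i]$, the expansion of $\norm{f_i+r_i}_4^4$ contains a term of order $\epsilon_i^{1/4}\,cb^3$, not $\sqrt{\epsilon_i}$. (The mean-zero property of the Haar functions kills the \emph{first} cross-term $\int f_i^{p-1}r_i$, which helps, but higher odd moments of $r_i$ do not vanish.) Note also that Lemma~\ref{lem:key} is an $L_1$ statement; applying it with $f=|f_i|^p$ and $g=|f_i+r_i|^p\Ind_{A_i}-|f_i|^p\Ind_{A_i}$ yields a comparison between $\norm{f_i+r_i}_p^p$ and $\norm{f_i}_p^p+\norm{g}_1$, but $\norm{g}_1\neq\norm{r_i}_p^p$, so the reduction to $L_1$ is not as direct as you suggest.

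None of this endangers the qualitative conclusion: the per-step error is $O_p(\epsilon_i^{\theta})$ for some $\theta=\theta(p)>0$, the products converge, and the constants tend to $1$ as $u\to\infty$. But your argument does not establish the \emph{specific} formulas for $C_1(u)$ and $C_2(u)$ (which encode the $\sqrt{\epsilon_i}$ rate) uniformly in $0<p<\infty$; for that you would need to exploit more of the Haar structure, or accept $p$-dependent constants.
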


\begin{proof}
Argue as in the proof of \cite{DHK2006}*{Lemma 2.2}, taking into consideration Lemmas~\ref{lem:DHK} and \ref{lem:HaarEqui}.
\end{proof}

Given a sequence $\AI=(A_j)_{j=1}^\infty$ of subsets of $\NN$ and $1<p<\infty$, we denote by $\BB^{(p)}_{\AI}$ the natural unconditional basis of the space $\Ba^{(p)}_{\AI}$ defined as in \eqref{eq:LpTsirelsonSpace}, that is,
\[
\BB^{(p)}_{\AI}=\left( \bigoplus_{j=1}^\infty \HB^{(p)}_{A_j} \right)_{\Ts^{(p)}}.
\]

\begin{lemma}\label{lem:Tplp}
Let $1<p<\infty$, and let $\AI=(A_j)_{j=1}^\infty$ and $\BI=(B_j)_{j=1}^\infty$ be sequences of subsets of $\NN$ such that:
\begin{itemize}[leftmargin=*]
\item $\AI$ and $\BI$ consist of pairwise disjoint nonempty subsets,
\item $(\cup_{j=1}^\infty A_j) \cap (\cup_{j=1}^\infty B_j)$ is finite, and
\item there is an increasing function $\varphi\colon\NN\to\NN$ such that
\[
(\cup_{j=1}^\infty A_j) \cup (\cup_{j=1}^\infty B_j)\subseteq\varphi(\NN)
\]
and $\varphi(i+1)\ge \varphi(i)+i+2$ for all $i\in\NN$.
\end{itemize}
Suppose that a sequence $\XB=(\xx_k)_{k=1}^\infty$ is equivalent to a permutation of a subbasis of both $\BB^{(p)}_{\AI}$ and $\BB^{(p)}_{\BI}$. Then $\XB$ is equivalent to a permutation of a subbasis of the canonical basis of $\Ts^{(p)}(\ell_p)$.
\end{lemma}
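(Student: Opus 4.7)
The plan is to show that $\XB$, once grouped by the $\AI$-block index, becomes uniformly $\ell_p$-equivalent inside each block, so that the outer $\Ts^{(p)}$-norm already exhibits $\XB$ as a subbasis of the canonical basis of $\Ts^{(p)}(\ell_p)$. Concretely, I would fix permutations and selections $T_j\subseteq\Dy_{A_j}$, $S_{j'}\subseteq\Dy_{B_{j'}}$ together with accompanying block indices $j_k^A,j_k^B$ such that $\xx_k$ is equivalent to $h^{(p)}_{I_k^A}$ in $L_p^{A_{j_k^A}}$ and to $h^{(p)}_{I_k^B}$ in $L_p^{B_{j_k^B}}$. Setting $j_k:=j_k^A$, the conclusion reduces to the uniform (in $j$) estimate
\[
\norm{\sum_{k:j_k=j} a_k\, h^{(p)}_{I_k^A}}_{L_p} \approx \Bigl(\sum_{k:j_k=j}\abs{a_k}^p\Bigr)^{1/p},
\]
since the outer $\Ts^{(p)}$-norm coming from the $\AI$-representation is then already in the target form.

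For this intra-block estimate I would refine the partition: within each $\AI$-block $j$ split $T_j=\bigsqcup_{j'}T_{j,j'}$ according to which $\BI$-block the matching element of $\XB$ lands in, and pair this with the analogous refinement $S_{j,j'}\subseteq\Dy_{B_{j'}}$. The hypothesis that $(\bigcup_j A_j)\cap(\bigcup_{j'} B_{j'})$ is finite forces the levels of $T_{j,j'}$ (inside $A_j$) and $S_{j,j'}$ (inside $B_{j'}$) to be essentially disjoint subsets of $\varphi(\NN)$. The condition $\varphi(i+1)\ge \varphi(i)+i+2$ forces $\varphi$ to grow at least quadratically, so passing to a doubling subsequence of indices converts that into genuine geometric growth and thus secures the level-spacing hypothesis of Lemma~\ref{lem:DHK}. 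The cardinality hypothesis $\abs{T_{j,j'}\cap \Dy_{\varphi(k+1)}}\le 2\abs{\Dy_{\varphi(k)}}$ I would extract from the bijective correspondence $T_{j,j'}\leftrightarrow S_{j,j'}$ combined with Theorem~\ref{thm:CSB}, which localizes both the $\AI$- and $\BI$-norms to consecutive slices of $\varphi(\NN)$ and forces the two sides to match in cardinality at each scale. Lemma~\ref{lem:DHK} applied cell by cell then yields $(h^{(p)}_I)_{I\in T_{j,j'}}\sim\ell_p^{\abs{T_{j,j'}}}$ uniformly, and a second application of Theorem~\ref{thm:CSB} consolidates the cells $T_{j,j'}$ with varying $j'$ inside the same $j$ into a single copy of $\ell_p^{\abs{T_j}}$.

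Plugging the uniform intra-block $\ell_p$-equivalence back into the $\AI$-representation gives
\[
\norm{\sum_k a_k\, \xx_k}\;\approx\;\norm{\sum_j \Bigl(\sum_{k:j_k=j}\abs{a_k}^p\Bigr)^{1/p}\ts_j}_{\Ts^{(p)}},
\]
which is exactly the norm of a permutation of the subbasis of the canonical basis of $\Ts^{(p)}(\ell_p)$ indexed by $\{(j_k,\nu_k)\}_k$, with $\nu_k$ any enumeration of within-block positions. The hard part will be the extraction of the per-slice cardinality bound on each cell $T_{j,j'}$: this is a genuinely combinatorial interchange along $\varphi(\NN)$ that must exploit both the essential disjointness of the $A_j$'s and $B_{j'}$'s and the quasi-geometric growth of $\varphi$, and it is precisely what rules out any ``dense'' Haar selection inside a single $\AI$-block being simultaneously compatible with a $\BI$-representation supported on disjoint levels.
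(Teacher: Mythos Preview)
Your plan has a real gap at the cardinality bound needed to invoke Lemma~\ref{lem:DHK}. The bijection $T_{j,j'}\leftrightarrow S_{j,j'}$ carries no information about the \emph{relative depths} of the matched intervals: the $\BI$-side intervals in $S_{j,j'}$ could all sit at levels of $B_{j'}$ far deeper than every level of $A_j$, in which case a single cell $T_{j,j'}$ can contain all $2^{\varphi(i+1)-1}$ intervals of some level $\varphi(i+1)\in A_j$ and the bound $\abs{T_{j,j'}\cap\Dy_{\varphi(i+1)}}\le 2\abs{\Dy_{\varphi(i)}}$ simply fails. Your two appeals to Theorem~\ref{thm:CSB} are also misplaced: that result concerns block sequences in Tsirelson's space and says nothing about the distribution of Haar levels inside a fixed $L_p^{A_j}$-component, so it can neither deliver the per-slice count nor ``consolidate'' the cells $T_{j,j'}$ over $j'$ into a single $\ell_p$.

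The device you are missing is the paper's trichotomy by interval size. One splits the index set of $\XB$ as $N_a=\{k:\abs{I_a(k)}<\abs{I_b(k)}\}$, $N_b=\{k:\abs{I_a(k)}>\abs{I_b(k)}\}$, and $N_0=\{k:\abs{I_a(k)}=\abs{I_b(k)}\}$; the last set is finite by the almost-disjointness hypothesis. On $N_a$ one groups by the $\AI$-block index $j$; if $k\in N_{a,j}$ has $\abs{I_a(k)}=2^{-\varphi(i+1)}$ then automatically $\abs{I_b(k)}\ge 2^{-\varphi(i)}$, and since the pairs $(j_b(k),I_b(k))$ are distinct and the $B_{j'}$ are pairwise disjoint, the number of such $k$ is bounded by the total number of dyadic intervals of level $\le\varphi(i)$, which is at most $2\abs{\Dy_{\varphi(i)}}$. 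This is exactly the hypothesis Lemma~\ref{lem:DHK} needs. On $N_b$ one must switch sides and group by the $\BI$-block index instead; an argument that stays entirely on the $\AI$-side, as your plan does, cannot succeed. Finally, the three pieces $N_a,N_b,N_0$ are reassembled using unconditionality of $\XB$ and of the unit vector system of $\Ts^{(p)}$, yielding a permutation of the canonical basis of $\bigl(\bigoplus_{j}\ell_p^{\eta(j)}\bigr)_{\Ts^{(p)}}$.
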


\begin{proof}
For the sake of convenience, let us put $h_I=h_I^{(p)}$.
Let $(j_a, I_a)$ and $(j_b,I_b)\colon \NN\to \NN\times\Dy$ be one-to-one maps such that $\XB$ is equivalent to both
$(L_{j_a(k)} (h_{I_a(k)}))_{k=1}^\infty$ and $(L_{j_b(k)} (h_{I_b(k)}))_{k=1}^\infty$. We split $\NN$ is three sets as follows:
\begin{align*}
N_a&=\enbrace{k\in\NN \colon \abs{I_a(k)}} < \abs{I_b(k)},\\
N_b&=\enbrace{k\in\NN \colon \abs{I_b(k)}} < \abs{I_a(k)}\mbox{, and}\\
N_0&=\enbrace{k\in\NN \colon \abs{I_b(k)}} = \abs{I_a(k)}.
\end{align*}

For each $j\in\NN$, let $N_{a,j}=\enbrace{k\in N_a \colon j_a(k)=j}$. Then, the family $(I_a(k))_{k\in N_{a,j}}$ consists of distinct dyadic intervals, and
\[
\norm{ \sum_{k\in N_a} c_k \, \xx_k }
\approx \norm{ \left(\norm{ \sum_{k\in N_{a,j}} c_k h_{I_a(k)} }_p\right)_{j=1}^\infty }_{\Ts^{(p)}},
\quad (c_k)_{k=1}^\infty\in c_{00}(N_a).
\]
If $i\in \NN$ and $k\in N_{a,j}$ satisfy $\abs{I_a(k)}=2^{-\varphi(i+1) }$, then $\abs{I_b(k)}\ge 2^{-\varphi(i)}$. Since the family $(I_b(k))_{k\in N_{a,j}}$ also consist of distinct dyadic intervals,
\[
\abs{\enbrace{ k\in N_{a,j} \colon \abs{I_a(k)}=2^{-\varphi(i+1) }}}\le 2^{1+\varphi(i)} -1\le 2 \abs{\Dy_{\varphi(i)}}.
\]
Therefore, an application of Lemma~\ref{lem:key} gives
\[
\norm{ \sum_{k\in N_a} c_k \, \xx_k }
\approx \norm{ \left(\left( \sum_{k\in N_{a,j}} \abs{c_k}^p \right)^{1/p}\right)_{j=1}^\infty }_{\Ts^{(p)}},
\quad (c_k)_{k=1}^\infty\in c_{00}(N_a).
\]
We define the sets $N_{b,j}$ and $N_{0,j}$ analogously. Switching the roles of $\AI$ and $\BI$ we obtain
\[
\norm{ \sum_{k\in N_b} c_k \, \xx_k }
\approx \norm{ \left(\left( \sum_{k\in N_{b,j}} \abs{c_k}^p \right)^{1/p}\right)_{j=1}^\infty }_{\Ts^{(p)}},
\quad (c_k)_{k=1}^\infty\in c_{00}(N_b).
\]
In turn, since $N_0$ is finite,
\[
\norm{ \sum_{k\in N_0} c_k \, \xx_k }
\approx \norm{ \left(\left( \sum_{k\in N_{0,j}} \abs{c_k}^p \right)^{1/p}\right)_{j=1}^\infty }_{\Ts^{(p)}},
\quad (c_k)_{k=1}^\infty\in c_{00}(N_0).
\]
Using that $\XB$ and the unit vector system of $\Ts^{(p)}$ are both unconditional bases, we obtain that $\XB$ is equivalent to a permutation of the canonical basis of $(\bigoplus_{j=1}^\infty \ell_p^{\eta(j)})_{\Ts^{(p)}}$, where $\eta(j)=\abs{N_{a,j}}+\abs{N_{b,j}}+\abs{N_{0,j}}$.
\end{proof}

\begin{lemma}\label{lemBBSl2}
Let $1<p<\infty$, and let $\AI=(A_j)_{j=1}^\infty$ be a sequence of nonempty subsets of $\NN$ with $\sup_j \abs{A_j}=\infty$. Then, there is a constant $C$ such that $\BB^{(p)}_{\AI}$ has for every $n\in\NN$ a block basis sequence $C$-equivalent to the unit vector system of $\ell_2^n$.
\end{lemma}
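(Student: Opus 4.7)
The plan is to carry out the entire construction inside a single summand of the direct sum $\Ba^{(p)}_{\AI}$, exploiting the hypothesis that $\sup_j\abs{A_j}=\infty$.

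Given $n\in\NN$, my first step is to pick $j_0\in\NN$ with $\abs{A_{j_0}}\ge n$. I would then apply Theorem~\ref{thm:PelRad} to the finite set $A_{j_0}$ to obtain, with a constant $C=C(p)$ independent of $A_{j_0}$, a block basic sequence $(y_i)_{i=1}^{\abs{A_{j_0}}}$ of $\HB^{(p)}_{A_{j_0}}$ that is $C$-equivalent to the unit vector system of $\ell_2^{\abs{A_{j_0}}}$; truncating to the first $n$ vectors leaves a block basic sequence that is still $C$-equivalent to the unit vector system of $\ell_2^n$.

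The remaining step is to transplant $(y_i)_{i=1}^n$ into $\BB^{(p)}_{\AI}$. The canonical embedding $L_{j_0}\colon L_p^{A_{j_0}}\to\Ba^{(p)}_{\AI}$ is an isometry, and $\HB^{(p)}_{A_{j_0}}$ sits inside $\BB^{(p)}_{\AI}$ as the $j_0$-th block of the direct-sum basis, so $(L_{j_0}(y_i))_{i=1}^n$ is automatically a block basic sequence of $\BB^{(p)}_{\AI}$ that is $C$-equivalent to the unit vector system of $\ell_2^n$, as required. I do not foresee any substantive obstacle here: the only conceptual point is the transfer of the block-basis property from a single summand into the ambient direct-sum basis, which is immediate from the coordinatewise construction of $\BB^{(p)}_{\AI}$, and the universality of the constant $C=C(p)$ in Theorem~\ref{thm:PelRad} furnishes the required uniformity in $n$.
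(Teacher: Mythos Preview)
Your proposal is correct and is exactly the argument the paper has in mind: the paper's proof consists of the single sentence ``It is a straightforward consequence of Theorem~\ref{thm:PelRad},'' and you have simply spelled out that straightforward consequence---pick a summand with $\abs{A_{j_0}}\ge n$, invoke Theorem~\ref{thm:PelRad} there, and push the resulting block sequence into $\Ba^{(p)}_{\AI}$ via the isometric embedding $L_{j_0}$.
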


\begin{proof}
It is a straightforward consequence of Theorem~\ref{thm:PelRad}.
\end{proof}

We are almost ready to prove the main result of this section. Before that, we bring up another property of Tsirelson's space that we will need.

\begin{proposition}\label{prop:convexconcav}
Let $0<p<\infty$. Then $\Ts^{(p)}$ is a $p$-convex lattice and a $r$-concave lattice for all $r>p$.
\end{proposition}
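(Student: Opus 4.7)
The plan is to handle the $p$-convexity and the $r$-concavity independently, since each relies on different ingredients.

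For the $p$-convexity, I would verify the inequality with constant one directly from the definition. Given $f_1,\dots,f_n\in\Ts^{(p)}$, the identity $\|f\|_{\Ts^{(p)}}^{p}=\||f|^{p}\|_{\Ts}$ combined with the triangle inequality in the Banach space $\Ts$ gives
\[
\left\|\left(\sum_{i=1}^n|f_i|^p\right)^{1/p}\right\|_{\Ts^{(p)}}^{p}=\left\|\sum_{i=1}^n|f_i|^p\right\|_{\Ts}\le\sum_{i=1}^n\bigl\||f_i|^p\bigr\|_{\Ts}=\sum_{i=1}^n\|f_i\|_{\Ts^{(p)}}^{p},
\]
so this is essentially a one-line check.

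For $r$-concavity with $r>p$, my plan is to reduce matters to a property of $\Ts$ itself via the standard correspondence between the concavity of a Banach lattice $X$ and of its $p$-convexification $X^{(p)}$: substituting $g_i=|f_i|^p$ in the defining inequality shows that $X^{(p)}$ is $r$-concave with constant $M$ if and only if $X$ is $(r/p)$-concave with constant $M^p$. Setting $s=r/p>1$, the problem is thus reduced to showing that $\Ts$ itself is $s$-concave for every $s>1$.

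The main obstacle is precisely this last claim about $\Ts$, which is a classical (if not entirely trivial) property of Tsirelson's space established in \cite{CasShu1989}. The natural way to see it is to obtain a lower $s$-estimate for disjointly supported vectors in $\Ts$ by blocking such vectors and applying Theorem~\ref{thm:CSB} together with the fact that the canonical basis $(\ts_n)$ has fundamental function of order $m$; then invoke the Banach-lattice theorem that an order-continuous lattice satisfying a lower $s$-estimate is $s'$-concave for every $s'>s$. Order continuity of $\Ts$ is immediate from its reflexivity and the $1$-unconditionality of $(\ts_n)$, and letting $s$ range through $(1,s')$ then yields $s'$-concavity of $\Ts$ for every $s'>1$, which finishes the argument.
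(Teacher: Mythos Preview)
Your approach is essentially the paper's: both reduce the claim to the fact that $\Ts$ satisfies a lower $s$-estimate for every $s>1$, and both defer to \cite{CasShu1989} (specifically Proposition~V.10) for that fact. You spell out the standard Banach-lattice reductions ($p$-convexity of the $p$-convexification, the concavity--convexification correspondence, and the passage from lower estimates to concavity) that the paper leaves implicit; note, however, that your heuristic for the lower estimate via Theorem~\ref{thm:CSB} and the fundamental function only directly treats successive blocks rather than arbitrary disjointly supported families, so the citation is still doing the real work there.
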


\begin{proof}
It suffices to prove that $\Ts$ satisfies a lower $r$-estimate for all $r>1$. But this is taken care of in \cite{CasShu1989}*{Proposition V.10}.
\end{proof}

\begin{proposition}\label{prop:NEGB}
Let $1<p<\infty$, $p\not=2$, and let $\AI=(A_j)_{j=1}^\infty$ and $\BI=(B_j)_{j=1}^\infty$ be sequences of subsets of $\NN$ such that:
\begin{itemize}[leftmargin=*]
\item $\AI$ and $\BI$ consist of pairwise disjoint nonempty subsets,
\item $\sup_j \abs{A_j}=\infty$,
\item $(\cup_{j=1}^\infty A_j) \cap (\cup_{j=1}^\infty B_j)$ is finite, and
\item the increasing sequence $\zeta\colon\NN\to\NN$   with range
\[
(\cup_{j=1}^\infty A_j) \cup (\cup_{j=1}^\infty B_j)
\]
satisfies $\zeta(i+1)\ge \zeta(i)+i+2$ for all $i\in\NN$.
\end{itemize}
Then $\BB^{(p)}_{\AI}$ and $\BB^{(p)}_{\BI}$ are not permutatively equivalent.
\end{proposition}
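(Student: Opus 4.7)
The plan is to argue by contradiction: assume $\BB^{(p)}_{\AI}$ and $\BB^{(p)}_{\BI}$ are permutatively equivalent, place this common basis inside the model space $\Ts^{(p)}(\ell_p)$ via Lemma~\ref{lem:Tplp}, and then play the $\ell_2^n$-blocks furnished by Lemma~\ref{lemBBSl2} against the convexity/concavity of $\Ts^{(p)}$ recorded in Proposition~\ref{prop:convexconcav}.

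Set $\XB := \BB^{(p)}_{\AI}$. Trivially $\XB$ is a permutation of a subbasis of $\BB^{(p)}_{\AI}$, and by the contradiction hypothesis it is also equivalent to a permutation of a subbasis of $\BB^{(p)}_{\BI}$; the sequence $\zeta$ appearing in the statement plays the role of the function $\varphi$ required by Lemma~\ref{lem:Tplp}. That lemma therefore identifies $\XB$, up to permutation and equivalence, with a family of distinct coordinate vectors of the canonical basis of $\Ts^{(p)}(\ell_p)$, and these are pairwise disjointly supported in the lattice $\Ts^{(p)}(\ell_p)$. On the other hand, since $\sup_j \abs{A_j}=\infty$, Lemma~\ref{lemBBSl2} provides, for every $n\in\NN$, a normalized block basic sequence of $\XB$ that is $C$-equivalent to the unit vector basis of $\ell_2^n$ with $C$ independent of $n$. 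Under the identification above, any such block basic sequence transports to a normalized disjointly supported family $(g_{n,i})_{i=1}^n$ in $\Ts^{(p)}(\ell_p)$ which remains uniformly equivalent to the unit vector basis of $\ell_2^n$.

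The contradiction then comes from convexity/concavity. By Proposition~\ref{prop:convexconcav}, $\Ts^{(p)}$ is $p$-convex and $r$-concave for every $r>p$, and these lattice estimates pass to the $\Ts^{(p)}$-sum of the (themselves $p$-convex and $r$-concave) spaces $\ell_p$. Applied to pairwise disjointly supported normalized elements, they yield $\norm{\sum_{i=1}^n g_{n,i}} \lesssim n^{1/p}$ from $p$-convexity and $\norm{\sum_{i=1}^n g_{n,i}} \gtrsim n^{1/r}$ from $r$-concavity for every $r>p$, while the uniform $\ell_2^n$-equivalence forces $\norm{\sum_{i=1}^n g_{n,i}} \approx n^{1/2}$. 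If $p>2$, the upper bound $n^{1/2} \lesssim n^{1/p}$ fails for large $n$; if $p<2$, choosing any $r\in(p,2)$ makes the lower bound $n^{1/r}\lesssim n^{1/2}$ fail for large $n$. Either way the contradiction closes the argument. The only minor point requiring care is the transfer of block basic sequences of $\XB$ to disjointly supported families in $\Ts^{(p)}(\ell_p)$ via the Lemma~\ref{lem:Tplp} identification, which is immediate since distinct coordinate vectors in $\Ts^{(p)}(\ell_p)$ are lattice-disjoint and $\XB$ is unconditional.
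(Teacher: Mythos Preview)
Your proof is correct and follows essentially the same approach as the paper: assume permutative equivalence, invoke Lemma~\ref{lem:Tplp} to embed $\BB^{(p)}_{\AI}$ as a subbasis of $\Ts^{(p)}(\ell_p)$, and then play the uniform $\ell_2^n$-blocks from Lemma~\ref{lemBBSl2} against the $p$-convexity and $r$-concavity (for every $r>p$) coming from Proposition~\ref{prop:convexconcav}. The only cosmetic difference is that the paper packages the contradiction as ``$q$-convex and $r$-concave forces $q\le 2\le r$, hence $p\le 2\le r$ for all $r>p$'', whereas you unpack it as explicit norm estimates on $\sum_i g_{n,i}$; the content is identical.
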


\begin{proof}
By Lemma~\ref{lemBBSl2}, if the lattice structure induced by $\BB^{(p)}_{\AI}$ is $q$-convex and $r$-concave, then $q\le 2 \le r$. Assume by contradiction that $\BB^{(p)}_{\AI}$ and $\BB^{(p)}_{\BI}$ are permutatively equivalent. Then, by Lemma~\ref{lem:Tplp}, $\BB^{(p)}_{\AI}$ is permutatively equivalent to a subbasis of $\Ts^{(p)}(\ell_p)$. Consequently, by Proposition~\ref{prop:convexconcav}, the lattice structure induced by $\BB^{(p)}_{\AI}$ is $p$-convex and $r$-concave for every $r>p$. We infer that $p\le 2 \le r$ for every $r>p$. This absurdity proves the result.
\end{proof}

\begin{theorem}
Given $1<p<\infty$, $p\not=2$, the space $\Ts^{(p)}$ has a continuum of permutatively non-equivalent greedy bases whose greedy constants are uniformly bounded and whose fundamental functions are uniformly equivalent to $(m^{1/p})_{m=1}^\infty$.
\end{theorem}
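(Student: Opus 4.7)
The plan is to apply Theorem~\ref{thm:FGHContinuum} to obtain a continuum $\Ft$ of increasing functions $\varphi\colon\NN\to\NN$ with $\varphi(j)\le 3^{j+1}$ (hence dominated by the fast growing hierarchy, by Part~\ref{FGH4} of Lemma~\ref{lem:FGHProp}) whose images have pairwise finite intersection, and then, for each $\varphi\in\Ft$, to construct a sequence $\AI_\varphi=(A_j^\varphi)_{j=1}^\infty$ of finite subsets of $\NN$ so that the natural basis $\BB^{(p)}_{\AI_\varphi}$ of $\Ba^{(p)}_{\AI_\varphi}$ is a greedy basis of an isomorphic copy of $\Ts^{(p)}$ with the required uniform constants, and so that distinct $\varphi,\psi\in\Ft$ yield pairs $(\AI_\varphi,\AI_\psi)$ falling within the scope of Proposition~\ref{prop:NEGB}.

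For the construction I would fix once and for all a polynomial $\Phi\colon\NN\to\NN$ with $\Phi(m+1)-\Phi(m)\ge m+2$ for every $m\in\NN$ (say $\Phi(m)=m^2+2m$), partition $\NN$ into consecutive blocks $I_1,I_2,\ldots$ with $\abs{I_j}=j$, and set
\[
A_j^\varphi=\enbrace{\Phi(\varphi(k))\colon k\in I_j}, \quad j\in\NN.
\]
Then $\abs{A_j^\varphi}=j$, the family $(A_j^\varphi)_{j=1}^\infty$ is pairwise disjoint with $\sup A_j^\varphi<\min A_{j+1}^\varphi$, and the function $j\mapsto\sup A_j^\varphi=\Phi(\varphi(\max I_j))$ is a composition of FGH-dominated functions, hence FGH-dominated by Lemma~\ref{lem:FGHProp}. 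So Theorem~\ref{thm:IsoTsHaar} gives $\Ba^{(p)}_{\AI_\varphi}\simeq\Ts^{(p)}$, with isomorphism constants controlled uniformly in $\varphi\in\Ft$ since the FGH bound $\varphi(j)\le 3^{j+1}$ is uniform. The dimension sequence $j\mapsto\dim L_p^{A_j^\varphi}\le 2^{\sup A_j^\varphi}$ is likewise FGH-dominated, and since the $L_p$-normalized Haar system is classically a greedy basis of $L_p$ with fundamental function $(m^{1/p})_{m=1}^\infty$ for $1<p<\infty$, so that $\norm{\Ind_E[\HB^{(p)}_{A_j^\varphi},L_p^{A_j^\varphi}]}_p\approx\abs{E}^{1/p}$ uniformly, Proposition~\ref{prop:DemTsirelson} yields that $\BB^{(p)}_{\AI_\varphi}$ is democratic with fundamental function equivalent to $(m^{1/p})_{m=1}^\infty$ and democracy constant independent of $\varphi$. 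Combined with the uniform unconditionality inherited from the Haar bases and the unit vector system of $\Ts^{(p)}$, this gives a greedy basis with uniformly bounded greedy constants.

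For pairwise non-equivalence when $\varphi\ne\psi$ in $\Ft$, I would invoke Proposition~\ref{prop:NEGB}. The pairwise disjointness within $\AI_\varphi$ and $\sup_j\abs{A_j^\varphi}=\infty$ hold by construction; the intersection $(\cup_j A_j^\varphi)\cap(\cup_j A_j^\psi)=\Phi(\varphi(\NN)\cap\psi(\NN))$ is finite since $\Phi$ is injective and $\varphi(\NN)\cap\psi(\NN)$ is finite by Theorem~\ref{thm:FGHContinuum}; and writing $(\cup_j A_j^\varphi)\cup(\cup_j A_j^\psi)=\enbrace{\Phi(m_i)\colon i\in\NN}$ with $m_1<m_2<\cdots$, the choice of $\Phi$ gives $\zeta(i+1)-\zeta(i)=\Phi(m_{i+1})-\Phi(m_i)\ge \Phi(m_i+1)-\Phi(m_i)\ge m_i+2\ge i+2$, since $m_i\ge i$. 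Proposition~\ref{prop:NEGB} therefore ensures that $\BB^{(p)}_{\AI_\varphi}$ and $\BB^{(p)}_{\AI_\psi}$ are not permutatively equivalent; transporting them to $\Ts^{(p)}$ via the isomorphisms of Theorem~\ref{thm:IsoTsHaar} delivers the continuum. I expect the main obstacle to be the balance between two opposing requirements in Proposition~\ref{prop:NEGB}: the cardinalities $\abs{A_j^\varphi}$ must go to infinity (so that $\BB^{(p)}_{\AI_\varphi}$ has $\ell_2$-like block bases via Lemma~\ref{lemBBSl2}), while the combined positions must still be spread out with gaps $\ge i+2$. Taking $A_j^\varphi$ to be a contiguous block of integers would violate the second condition; spreading its $j$ elements along the sparse set $\Phi(\NN)$, as above, resolves the tension.
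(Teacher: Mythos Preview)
Your proposal is correct and follows essentially the same route as the paper's proof: obtain the continuum $\Ft$ from Theorem~\ref{thm:FGHContinuum}, compose each $\varphi\in\Ft$ with a fixed spreading function satisfying the gap condition $\zeta(i+1)\ge\zeta(i)+i+2$, partition the resulting image into consecutive blocks of strictly increasing size to form $\AI_\varphi$, and then invoke Theorem~\ref{thm:IsoTsHaar}, Proposition~\ref{prop:DemTsirelson}, and Proposition~\ref{prop:NEGB}. The only differences are cosmetic---the paper uses the spreading function $\alpha(k)=5\cdot 2^{k-1}-2k-2$ and block sizes $2j-1$ (via $\beta(k)=k^2$) in place of your polynomial $\Phi$ and block sizes $j$---and your verification that $m_i\ge i$ forces $\zeta(i+1)-\zeta(i)\ge i+2$ is exactly the mechanism the paper exploits.
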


\begin{proof}
Let $\Ft$ be the uncountable set of functions provided by Theorem~\ref{thm:FGHContinuum}. Consider the  increasing functions $\alpha$, $\beta\colon\NN\to\NN$ given by
\[
\alpha(k)=5\cdot 2^{k-1}-2k-2,\quad \beta(k)=k^2, \quad k\in\NN.
\]
For each function $\varphi\in\Ft$ consider the sequence $\AI_\varphi=(A_{j,\varphi})_{j=1}^\infty$ of subsets of $\NN$ given by
\[
A_{j,\varphi}=\enbrace{ \alpha(\varphi(n)) \colon \beta(j-1)<n\le \beta(j)}.
\]
 
By construction, $\abs{A_{j,\varphi}}=2j-1$ for all $j\in\NN$, and
\[
A_\varphi:=\cup_{j=1}^\infty A_{j,\varphi}=\alpha(\varphi(\NN)).
\]
Hence, given different functions  $\varphi$, $\psi\in\Ft$,  the set $A_\varphi\cap A_\psi$  is finite. Moreover, since the function $\alpha$ satisfies $\alpha(k+1)\ge \alpha(k)+k+2$ for all $k\in\NN$, so does the increasing function whose range is $A_\varphi\cup A_\psi$.

Applying Proposition~\ref{prop:NEGB} gives that the set
\[
\BB^{(p)}_{\AI_\varphi}, \quad \varphi\in\Ft,
\]
is a family of pairwise permutatively non-equivalent bases. Since $\alpha$ and $\beta$ are dominated by the fast growing hierarchy by Part~\ref{FGH4} of Lemma~\ref{lem:FGHProp}, an application of Part\ref{FGH5} of Lemma~\ref{lem:FGHProp} and Theorem~\ref{thm:IsoTsHaar} gives that $\Ba^{(p)}_{\AI_\varphi}\simeq \Ts^{(p)}$ for all $\varphi\in\Ft$. Since the Haar system is a democratic basis of $L_p$ \cite{Temlyakov1998}, applying Proposition~\ref{prop:DemTsirelson} gives that $\BB^{(p)}_{\AI_\varphi}$ is democratic with fundamental function equivalent to $(m^{1/p})_{m=1}^\infty$.

Finally, we point out that all the isomorphisms and equivalences that we use in the proof that $\BB^{(p)}_{\AI_\varphi}$ is a greedy basis are uniform.
\end{proof}

\section{Conditional almost greedy bases in Tsirelson's space and its convexifications $\Ts^{(p)}$ for $0<p<\infty$}\label{Sect:AlmostGreedy}\noindent
In this section we give a neat application of Theorem~\ref{thm:TsirelsonIsomorphism} to the structure of the spaces $\Ts^{(p)}$ for $0<p<\infty$ in that they contain an uncountable set of mutually non-equivalent \emph{conditional} (i.e., not unconditional) almost greedy bases.

 Unconditional bases are a special kind of quasi-greedy bases and, although the converse is not true in general, quasi-greedy bases always retain vestiges of unconditionality. Every Banach space with a Schauder basis has a conditional basis by a classical result of Pelczy\'{n}ski and Singer \cite{PelSin1964}. Thus in order to get a more accurate information on the structure of a given space by means of its conditional bases, one needs to restrict the discussion on their existence by imposing certain distinctive properties. These additional properties are imported to Banach space theory from greedy approximation theory, where we find very important types of bases, such as quasi-greedy or almost greedy bases, which are suitable to implement the greedy algorithm and yet they need not be unconditional.

The topic of finding conditional quasi-greedy bases in a given Banach (or quasi-Banach) space is not a trivial matter and their existence cannot be taken for granted. For instance, it is known \cite{DKKT2003} that the only quasi-greedy basis of $c_{0}$ is the canonical unit vector system, which is unconditional.

In 2003, Dilworth et al.\ \cite{DKK2003} conceived a method that we call the \emph{Dilworth-Kalton-Kutzarova method}, or \emph{DKK-method} for short, for constructing conditional quasi-greedy bases in certain Banach spaces $\XX$. This technique heavily relies on the existence of a complemented subspace of $\XX$ with a symmetric or subsymmetric basic sequence.

Thus, to advance the state-of-the-art of the subject we must investigate the existence of conditional almost-greedy bases in spaces \emph{without} a complemented subsymmetric basic sequence, and the most important examples os this kind of spaces are Tsirelson's space and its convexifications.

In order to be able to show the non-equivalence of the almost greedy bases that we will construct, we will quantify the conditionality of a basis $\XB$ of a Banach (or quasi-Banach) space $\XX$ by means of their \emph{conditionality parameters},
\[
\unc_m=\unc_m[\XB,\XX]=\sup_{\abs{A}\le m} \norm{ S_A }, \quad m\in\NN,
\]
and
\[
\aunc_m=\aunc_m[\XB,\XX]=\sup \enbrace{ \norm{ S_A(f)} \colon \norm{f}=1, \, \supp(f)\subseteq\NN[m]},\, m\in\NN,
\]
where for finite $A\subseteq \NN$,
\[
S_A(f)=\sum_{n\in A} \xx_n^*(f)\, \xx_n,\quad f\in \XX.
\]
We have that $\aunc_m\le \unc_m$ for all $m$, and the basis $\XB$ is unconditional if and only if $\sup_m \aunc_m<\infty$ (or, equivalently, $\sup_m \unc_m<\infty$).

The conditionality parameters of a conditional quasi-greedy basis $\XB$ of a $p$-Banach space $\XX$, $0<p\le 1$, grow slowly, to the extent that they verify the estimate
\begin{equation*}
\unc_m[\XB]\lesssim (\log m)^{1/p}, \quad m\ge 2,
\end{equation*}
(see \cite{DKK2003}*{Lemma 8.2} and \cite{AAW2021b}*{Theorem 5.1}). Answering a question raised by Temlyakov et al.\ \cite{TemYangYe2011b}, several authors have studied whether this bound can be improved when we consider quasi-greedy bases in some special classes of of Banach spaces \cite{GW2014}. For instance, in \cite{AAGHR2015} it was shown that every quasi-greedy basis in a superreflexive Banach space verifies the better condition
\begin{equation*}
\unc_m[\XB]\lesssim (\log m)^{1-\epsilon}, \quad m\ge 2.
\end{equation*}
for some $0<\epsilon<1$, and this is optimal. In the recent articles \cites{AADK2019b,AAW2021b} the authors push forward the research in this direction considerably by providing a wide class of spaces with conditional almost greedy bases whose conditionality constants are ``as large as possible.'' In the same spirit, here we will exhibit the existence of almost greedy bases as conditional as possible in Tsirelson's space $\Ts$ and its convexifications $\Ts^{(p)}$. Since the techniques in the non-superreflexive case (i.e., for $0<p\le 1$) are different from the superreflexive case (i.e., for $1<p<\infty$) we have split our discussion in these two possible scenarios.

\subsection{Almost greedy bases in $\Ts^{(p)}$ for $0<p\le 1$.}
To tackle this case we take advantage of the recent techniques developed in \cite{AAW2021b}.

\begin{theorem}\label{thm:HCAGBple1}
Let $0<p\le 1$ and suppose that $\eta\colon[1,\infty)\to[1,\infty)$ is a concave increasing function. Then $\Ts^{(p)}$ has a conditional almost greedy basis $\XB$ with
\begin{itemize}
\item $\unc_m[\XB,\Ts^{(p)}]\approx \eta^{1/p}(\log m)$ for $m\ge 2$, and
\item $\udf[\XB,\Ts^{(p)}](m) \approx m^{1/p}$ for $m\in\NN$.
\end{itemize}
\end{theorem}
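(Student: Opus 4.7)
My plan is to combine the representation provided by Theorem~\ref{thm:TsirelsonIsomorphism} with the Dilworth--Kalton--Kutzarova (DKK) construction machinery, refined as in \cite{AAW2021b}, to manufacture a basis of $\Ts^{(p)}$ piece by piece inside a direct-sum decomposition. The reference point is the fact that if a quasi-Banach space is written as a direct sum in the sense of a democratic space, and each summand carries a conditional quasi-greedy basis, then the pieces can be glued into a single conditional quasi-greedy basis of the whole space with controllable parameters.

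First, I would select an increasing function $\varphi\colon\NN\to\NN$ that is dominated by the fast growing hierarchy but grows fast enough so that $\eta^{1/p}(\log \varphi(j))\approx \eta^{1/p}(j)$; Lemma~\ref{lem:One}\ref{FGR:C} shows that $\varphi(j)=2^j$ or $\varphi(j)=j2^j$ already does the job, since $\log \varphi(j)\approx j$. Theorem~\ref{thm:TsirelsonIsomorphism} then identifies $\Ts^{(p)}$ with the direct sum
\[
\YY:=\left(\bigoplus_{j=1}^\infty \ell_p^{\varphi(j)}\right)_{\Ts^{(p)}}
\]
as lattices via a canonical bijection. Next, for each $j\in\NN$ I would invoke the DKK construction, in the $p$-convex form developed in \cite{AAW2021b}, to produce on $\ell_p^{\varphi(j)}$ a basis $\XB_j$ which is normalized, quasi-greedy with a uniform constant independent of $j$, democratic with fundamental function equivalent to $k\mapsto k^{1/p}$, and whose conditionality parameters satisfy
\[
\unc_m[\XB_j,\ell_p^{\varphi(j)}]\approx \eta^{1/p}(\log m),\quad 2\le m\le \varphi(j).
\]
Such a basis is typically obtained by twisting the unit vector system of $\ell_p^{\varphi(j)}$ by an upper triangular operator whose off-diagonal decay is prescribed by $\eta$; the $p$-convexity of $\ell_p^{\varphi(j)}$ guarantees that quasi-greediness survives the twist.

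Having the blocks in hand, I would form the sum basis $\XB=(\bigoplus_{j=1}^\infty \XB_j)_{\Ts^{(p)}}$ of $\YY\simeq \Ts^{(p)}$. The summary at the start of Section~3 already notes that such a direct sum is unconditional when the summands are, and the argument extends with only cosmetic changes to show that the direct sum of uniformly quasi-greedy bases is quasi-greedy (the greedy projection of an element in $\YY$ decomposes blockwise and the moduli of concavity are uniformly controlled). For democracy I would appeal directly to Proposition~\ref{prop:DemTsirelson}, since every $\XB_j$ has fundamental function equivalent to $k^{1/p}$; this gives $\udf[\XB,\Ts^{(p)}](m)\approx m^{1/p}$. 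Almost greediness then follows from the characterization of Dilworth--Kalton--Kutzarova--Temlyakov (quasi-greedy plus democratic).

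The main obstacle, and what most of the real work would consist of, is the two-sided estimate $\unc_m[\XB,\Ts^{(p)}]\approx \eta^{1/p}(\log m)$. The lower bound is the easier direction: for $m\ge 2$, choose $j$ with $\varphi(j)\gtrsim m$ and embed a witness for $\unc_m[\XB_j,\ell_p^{\varphi(j)}]$ through the canonical embedding $L_j$, which is an isometry; this transfers the conditionality from a single block to $\XB$ and yields $\unc_m[\XB,\Ts^{(p)}]\gtrsim \eta^{1/p}(\log m)$. The upper bound is more delicate: an arbitrary coordinate projection $S_A$ on $\XB$ splits as a direct sum of partial projections $S_{A_j}$ on the blocks, but the Tsirelson norm of the resulting blockwise outputs must be bounded by the Tsirelson norm of the inputs times $\eta^{1/p}(\log|A|)$. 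Here I would exploit concavity of $\eta$ and the $p$-convexity of $\Ts^{(p)}$ (Proposition~\ref{prop:convexconcav}) to pass the $\eta^{1/p}(\log|A_j|)$ bounds on the blocks through the lattice norm; the inequality $\sum_j \eta(\log |A_j|) \abs{A_j}\le \eta(\log |A|) \abs{A}$, a consequence of concavity and $|A_j|\le |A|$, is the kind of control needed. Conditionality of $\XB$ (i.e., $\sup_m \unc_m=\infty$) is then automatic as soon as $\eta$ is unbounded; if $\eta$ is bounded, one replaces it with an auxiliary unbounded concave majorant of comparable order on a sufficiently sparse sequence of scales to still obtain a conditional basis.
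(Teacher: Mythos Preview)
Your approach is essentially the paper's: represent $\Ts^{(p)}$ via Theorem~\ref{thm:TsirelsonIsomorphism} as a $\Ts^{(p)}$-direct sum of $\ell_p^{\varphi(j)}$'s, install on each summand a finite piece of the conditional almost greedy basis coming from \cite{AAW2021b}, and invoke Proposition~\ref{prop:DemTsirelson} for democracy. The paper is cleaner on two points you over-engineer: rather than a vague ``twist by an upper triangular operator'', it quotes from \cite{AAW2021b} the specific fact that the initial spans $\YY_j=[\yy_1,\dots,\yy_j]$ of the ambient basis are \emph{uniformly} isomorphic to $\ell_p^j$---this is exactly what legitimates placing the block bases on $\ell_p^{\varphi(j)}$ with uniform constants---and it needs no concavity inequality for the conditionality upper bound, since $|A_j|\le|A|$ for every block gives $\|S_{A_j}(f_j)\|\lesssim\unc_{|A|}[\YB]\,\|f_j\|$, which the lattice $\Ts^{(p)}$-norm assembles directly.
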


\begin{proof}
Pick an arbitrary increasing function $\varphi\colon\NN\to\NN$ dominated by the fast growing hierarchy. By \cite{AAW2021b}*{Theorem 4.1}, \cite{AAW2021b}*{Proposition 4.2}, \cite{AAW2021b}*{Proposition 4.4}, \cite{AAW2021b}*{Theorem 5.1}, \cite{AAW2021b}*{Proposition 5.2} and \cite{AAW2021b}*{Proposition 6.3}, there is a $p$-Banach space $\YY$ and an almost greedy basis $\YB=(\yy_n)_{n=1}^\infty$ of $\YY$ such that
\begin{itemize}[leftmargin=*]
\item $\unc_m[\YB,\YY]\approx \eta^{1/p}(m)$ for $m\in\NN$,
\item $\udf[\YB,\YY](m) \approx m^{1/p}$ for $m\in\NN$, and
\item $\YY_j:=[\yy_n \colon 1\le n \le j]$ is $C$-isomorphic to $\ell_p^{j}$ for some constant $C$  independent of $j$.
\end{itemize}
Set $\XB=\left( \bigoplus_{j=1}^\infty \YB^{(\varphi(j))}\right)_{\Ts^{(p)}}$ and $\XX=\left( \bigoplus_{j=1}^\infty \YY_{\varphi(j)}\right)_{\Ts^{(p)}}$.
Then, $\XB$ is a quasi-greedy basis of $\XX$ with $\unc_m[\XB,\XX]\approx \eta^{1/p}(m)$ for $m\in\NN$.
By Theorem~\ref{thm:TsirelsonIsomorphism},
\[
\XX\simeq \left( \bigoplus_{j=1}^\infty \ell_p^{\varphi(j)}\right)_{\Ts^{(p)}} \simeq \Ts^{(p)}.
\]
By Proposition~\ref{prop:DemTsirelson}, $\XB$ is democratic with fundamental function equivalent to $(m^{1/p})_{m=1}^\infty$.
\end{proof}

\subsection{Almost greedy bases in $\Ts^{(p)}$ for $1<p<\infty$}
In contrast to the case $0<p\le 1$, here we will appeal to the DKK method. In order make the method fit our purposes, we need to construct suitable conditional Schauder bases of Hilbert spaces, and to that end we will combine Harmonic analysis methods with a nowadays standard `rotating' technique.

Given sequences $\XB=(\xx_n)_{n=1}^\infty$ and $\YB=(\yy_n)_{n=1}^\infty$ in quasi-Banach spaces $\XX$ and $\YY$, the `rotated' sequence $\XB\diamond\YB=(\zz_n)_{n=1}^\infty$ in $\XX\oplus\YY$ is given by
\[
\zz_{2n-1}=\frac{1}{\sqrt{2}}(\xx_n,\yy_n), \quad \zz_{2n}=\frac{1}{\sqrt{2}}(\xx_n,-\yy_n), \quad n\in\NN
\]
(see, e.g., \cite{AABBL2021}*{\S4}).

\begin{theorem}\label{thm:GWBasis}
For each $0<a<1$ there is a Schauder basis $\HB_a$ of $\ell_2$ such that
\begin{itemize}
\item $\aunc_m[\HB_a,\ell_2]\gtrsim m^a$ for $m\in\NN$; and
\item given $b>a$, $\unc_m[\HB_a,\ell_2]\lesssim m^b$ for $m\in\NN$.
\end{itemize}
\end{theorem}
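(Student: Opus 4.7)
The plan is to build $\HB_a$ by applying the rotation technique $\diamond$ of \cite{AABBL2021}*{\S4} to an auxiliary conditional Schauder basis of $\ell_{2}$ whose partial-sum projections grow polynomially at rate $m^a$. The rotation converts a sequence that may be merely basic (or a basis on a subspace) into a Schauder basis of $\ell_{2}\oplus\ell_{2}\simeq\ell_{2}$, and it has the transparent effect, in the coordinates of the rotation, that for every $m$ the partial-sum projection $S_{2m}[\HB_a]$ decouples as the direct sum $S_m^\XB\oplus S_m^\YB$ of the partial-sum projections along the two factor bases (odd-length partial sums differ from the neighboring even one by a rank-one term). Since the orthonormal side $\XB$ contributes only constants to any conditionality parameter, everything reduces to the quantitative behavior of the auxiliary basis $\YB$ on the other factor.

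The key construction is therefore a normalized Schauder basis $\YB=(\yy_n)$ of $\ell_2$ satisfying $\norm{S_{m}^\YB}\asymp m^{a}$. The natural harmonic-analytic way to build $\YB$ is to take the Fourier system $(e^{int})_{n\ge 0}$ inside $L_{2}(\TT, w_{a}\,dt)\simeq \ell_{2}$ with weight $w_{a}(t)=\abs{1-e^{it}}^{2a}$, normalize it, and use classical weighted partial-sum estimates to read off $\norm{S_m^\YB}\asymp m^{a}$; when $a\ge 1/2$ the weight falls outside the Muckenhoupt class, and the sharp rate of polynomial blowup is exactly $m^{a}$. A purely matricial alternative is to choose an upper-triangular $T\in B(\ell_{2})$ with bounded inverse such that $\norm{(I-E_{m})TE_{m}}\asymp m^{a}$, where $E_{m}$ is the canonical projection, and set $\yy_{n}=Te_{n}/\norm{Te_{n}}$.

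With the model in place, the estimates on the conditionality parameters of $\HB_{a}:=\XB\diamond\YB$ follow routinely. For the lower bound on $\aunc_{m}$, I would pick a unit vector $f\in[\yy_{1},\dots,\yy_{m}]$ witnessing $\norm{S_{m}^\YB f}\gtrsim m^{a}$, lift it through the rotation to an $\tilde f$ supported in the first $2m$ vectors of $\HB_{a}$, and use the subset $A\subseteq\{1,\dots,2m\}$ corresponding (via the rotation) to the pair (empty subset in the $\XB$-factor, full initial segment in the $\YB$-factor). Then $\norm{S_{A}\tilde f}\gtrsim m^{a}\norm{\tilde f}$. For the upper bound $\unc_{m}\lesssim m^{b}$, any $A\subseteq\{1,\dots,m\}$ corresponds to a pair of subsets $(A_\XB,A_\YB)$ in the factors; the $\XB$-side is unconditional and contributes $O(1)$, while $S_{A_\YB}$ on the $\YB$-side is written as a telescoping combination of $O(\log m)$ partial-sum operators to obtain $\norm{S_{A_\YB}}\lesssim m^{a}\log m$, which is absorbed by $m^{b-a}$ for any $b>a$.

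The main obstacle is entirely in the construction of the model $\YB$: producing a Schauder basis of $\ell_{2}$ with both the sharp lower bound $\norm{S_{m}^\YB}\gtrsim m^{a}$ and the matching upper bound $\norm{S_{m}^\YB}\lesssim m^{a}$ simultaneously, which requires the precise quantitative weighted-Fourier estimates alluded to above (or the delicate verification of boundedness of $T^{-1}$ in the matricial approach). Once the model is in hand, the rotation and the conditionality computations are standard.
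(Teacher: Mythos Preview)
Your plan has a fundamental gap at the very first step. You ask for a normalized Schauder basis $\YB$ of $\ell_2$ with $\norm{S_m^{\YB}}\asymp m^a$, but a Schauder basis has, by definition, uniformly bounded partial-sum projections; the two requirements are contradictory for any $a>0$. Your own weighted-Fourier construction confirms this: for $a<1/2$ the weight $\abs{1-e^{it}}^{2a}$ is $A_2$, the exponential system is a Schauder basis, and $\sup_m\norm{S_m^{\YB}}<\infty$; for $a\ge 1/2$ the weight is not $A_2$, the partial sums blow up like $m^a$, and the system is \emph{not} a Schauder basis. In neither regime do you get both properties simultaneously. And since, as you correctly observe, $S_{2m}[\XB\diamond\YB]=S_m^{\XB}\oplus S_m^{\YB}$, the rotation cannot repair this: if $\norm{S_m^{\YB}}\to\infty$ then $\HB_a=\XB\diamond\YB$ fails to be a Schauder basis as well.

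The paper's construction sidesteps this by a different mechanism. It uses the Babenko weights $\abs{t}^{\pm a}$, which are $A_2$ for the full range $0<a<1$, so the trigonometric system $\XB$ is a genuine Schauder basis of both $L_2(\abs{t}^{a}\,dt)$ and $L_2(\abs{t}^{-a}\,dt)$. The conditionality of $\HB_a=\XB\diamond\XB$ (same system, two different ambient spaces) is not driven by partial-sum growth but by the disparity of the Dirichlet-kernel norms $\norm{\Ind_{A_m}}\approx m^{(1\mp a)/2}$ in the two spaces; comparing $\Ind_{B_m}$ with its alternating-sign companion gives the lower bound $\aunc_m\gtrsim m^a$. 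For the upper bound the paper does not telescope: instead it shows, via H\"older plus Hausdorff--Young on one factor and duality on the other, that $\HB_a$ is sandwiched between the unit vector systems of $\ell_p$ and $\ell_q$ with $1/p-1/q$ arbitrarily close to $a$, which yields $\unc_m\lesssim m^b$ for any $b>a$ through \cite{AAB2021}*{Lemma 5.1}. Your decoupling-plus-telescoping route for the upper bound is also problematic on its own terms: coordinate projections $S_A$ on a rotated basis do not in general split as pairs of coordinate projections on the factors, and an arbitrary $S_A$ with $\abs{A}\le m$ cannot be written as a combination of $O(\log m)$ initial-segment projections.
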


\begin{proof}
Consider the sequence $\XB=(\xx_n)_{n=1}^\infty$ of trigonometric polynomials (on the variable $t$) given by
\[
\xx_1=\frac{1}{\sqrt{2\pi}},\; \xx_{2j}(t)= \frac{1}{\sqrt{\pi}}\cos(j t), \; \xx_{2j+1}(t)= \frac{1}{\sqrt{\pi}}\sin(j t), \quad j\in\NN.
\]
It is known \cite{Babenko1948} that $\XB$ is a Schauder basis of $L_2([-\pi,\pi], \abs{t}^{\lambda} \, dt)$ for every $\lambda\in(-1,1)$. Moreover, if $A_m=\NN[2m+1]$,
\begin{equation}\label{eq:Dirichlet}
\norm{ \Ind_{A_m}[\XB, L_2([-\pi,\pi], \abs{t}^{\lambda} \, dt)] } \approx m^{(1-\lambda)/2}, \quad m\in\NN
\end{equation}
(see \cite{GW2014}*{Lemma 3.7}). Suppose that $\lambda>0$. By H\"older's inequality,
\[
L_2([-\pi,\pi], \abs{t}^{\lambda} \, dt)\subseteq L_p([-\pi,\pi]), \quad p<\frac{2}{1+\lambda}.
\]
Combining this embedding with Hausdorff-Young's inequality gives that $\XB$, regarded as a basis of $L_2([-\pi,\pi], \abs{t}^{\lambda} \, dt)$, dominates the unit vector system of $\ell_q$ for every $q>2/(1-\lambda)$. Since $\XB$ is an orthonormal basis of $L_2([-\pi,\pi])$, for $\lambda<0$ the trivial embedding
\[
L_2([-\pi,\pi], \abs{t}^{\lambda} \, dt)\subseteq L_2([-\pi,\pi])
\]
gives, that $\XB$ dominates the unit vector system of $\ell_2$. We infer that, if $0<a<1$, the rotated system $\YB:=\XB\diamond \XB$ is a Schauder basis of
\[
\HH=L_2([-\pi,\pi], \abs{t}^{a} \, dt) \oplus L_2([-\pi,\pi], \abs{t}^{-a} \, dt)
\]
which dominates the unit vector system of $\ell_q$ for every $q>2/(1-a)$.

Via the natural identification of the dual space of $L_2([-\pi,\pi], \abs{t}^{\lambda} \, dt)$ with $L_2([-\pi,\pi], \abs{t}^{-\lambda} \, dt)$, $-1<\lambda<1$, the dual basis of $\XB$ is the sequence $\XB$ itself. Consequently, the dual basis of $\YB$ is the sequence $\YB$ itself. Hence, by duality, $\YB$, regarded as a basis of $\HH$, is dominated by the unit vector system of $\ell_p$ for every $p<2/(1+a)$. An application of \cite{AAB2021}*{Lemma 5.1} gives
\[
\unc_m[\YB,\HH]\lesssim m^{b}, \quad m\in\NN,
\]
where $b$ is arbitrarily close to
\[
\frac{1+a}{2}-\frac{1-a}{2}=a.
\]
Set $\varepsilon=((-1)^n)_{n=1}^\infty$ and $B_m=\NN[4m+2]$. By Inequality~\eqref{eq:Dirichlet},
\[
\frac{\norm{ \Ind_{\varepsilon,B_m}[\YB,\HH] }}
{\norm{ \Ind_{B_m}[\YB,\HH] }}\approx m^{a}.
\]
We infer that $\aunc_m[\YB,\HH]\gtrsim m^a$ for $m\in\NN$.
\end{proof}

Let $(\XX,\norm{ \cdot}_\XX)$ be a Banach space with a semi-normalized Schauder basis $\XB=(\xx_n)_{n=1}^\infty$ and let $(\Sym,\norm{ \cdot}_\Sym)$ be a subsymmetric sequence space, i.e., a sequence space for which the unit vector system is a $1$-sub\-sym\-met\-ric basis. Set

\[
\Lambda_m=\norm{ \sum_{j=1}^m \ee_j}_\Sym, \quad m\in\NN.
\]
Let $\sigma=(\sigma_n)_{n=1}^\infty$ be an \emph{ordered partition} of $\NN$, i.e., a partition into integer intervals with
\[
\max(\sigma_n)<\max(\sigma_{n+1}),\quad n\in\NN.
\]
The \emph{averaging projection} $P_\sigma\colon\FF^\NN\to\FF^\NN$ associated with the ordered partition $\sigma$ can be expressed as
\[
P_\sigma(f)=\sum_{n=1}^\infty \vv_n^*(f) \, \vv_n,
\]
where, if $\ee_j^*$ is $j$-th coordinate functional, defined on $\FF^\NN$ by $(a_k)_{k=1}^\infty\mapsto a_j$,
\[
\quad \vv_n= \frac{1}{\Lambda_{\abs{\sigma_n}}} \sum_{j\in\sigma_n} \ee_j,\quad
\vv_n^*= \frac{\Lambda_{\abs{\sigma_n}}}{\abs{\sigma_n}} \sum_{j\in\sigma_n} \ee_j^*,
\quad n\in\NN.
\]

Let $Q_\sigma=\Id_{\FF^\NN}-P_\sigma$ be the complementary projection. We define $\norm{ \cdot }_{\XB,\Sym,\sigma}$ on $c_{00}$ by
\[
\norm{ f }_{\XB,\Sym,\sigma}=\norm{ Q_\sigma(f)}_\Sym+\norm{ \sum_{n=1}^\infty \vv_n^*(f)\, \xx_n}_\XX, \quad f\in c_{00}.
\]
The completion of the normed space $(c_{00},\norm{ \cdot }_{\XB,\Sym,\sigma})$ will be denoted by $\YY[\XB,\Sym,\sigma]$. Theorem~\ref{thm:AADKGathered} below gathers the properties of the DKK-method that we will need. To properly enunciate it, we need to introduce some terminology. Set
\[
M_r=\sum_{n=1}^r \abs{\sigma_n}, \quad r\in\NN.
\]
Given $m\in\NN$, let $\Sym^{(m)}$ and $\YY^{(m)}[\XB,\Sym,\sigma]$ respectively denote the $m$-dimensional subspaces of $\Sym$ and $\YY[\XB,\Sym,\sigma]$ spanned by $(\ee_n)_{n=1}^m$. Similarly, $\XX^{(m)}$ will denote the $m$-dimensional subspace of $\XX$ spanned by $(\xx_n)_{n=1}^m$, and $\Sym_\sigma^{(m)}$ will be the subspace of $\Sym$ spanned by $(\vv_n)_{n=1}^m$.

\begin{theorem}\label{thm:AADKGathered}
Let $\XX$ be a Banach space with a Schauder basis $\XB$, let $\Sym$ be a subsymmetric sequence space, and let $\sigma$ be an ordered partition of $\NN$.
\begin{enumerate}[label=(\roman*),leftmargin=*,widest=iii]
\item\label{it:AADK1} The unit vector system is a semi-normalized Schauder basis of $\YY[\XB,\Sym,\sigma]$.
\item\label{it:AADK2} For $r\in\NN$, the space $\YY^{(M_r)}[\XB,\Sym,\sigma]\oplus \Sym_\sigma^{(r)}$ is uniformly isomorphic to $\XX^{(r)} \oplus \Sym^{(M_r)}$.

\item\label{it:AADK3} Assume that $(\Lambda_n)_{n=1}^\infty$ has both the LRP and the URP, and that $M_r\lesssim M_{r+1}-M_r$ for $r\in\NN$. Then, the unit vector system is an almost greedy basis of $\YY[\XB,\Sym,\sigma]$ with fundamental function equivalent to $(\Lambda_n)_{n=1}^\infty$.

\item\label{it:AADK4} Assume that $\aunc_m[\XB,\XX]\gtrsim \eta(m)$ for $m\in\NN$ for some non-decreasing doubling function $\eta\colon[0,\infty)\to[0,\infty)$, and that $\log(M_r) \lesssim r$ for $r\in\NN$. Then
$
\aunc_{m}[\EB_\NN, \YY[\XB,\Sym,\sigma]]\gtrsim \eta(\log m)
$
for $m\in\NN$.

\item\label{it:AADK6}Suppose that the lattice $\Sym$ satisfies a lower $s$-estimate and an upper $t$-estimate, $1\le s \le t \le \infty$, that $\aunc_m[\XB]\lesssim \eta(m)$ for $m\in\NN$ for some non-decreasing doubling function $\eta\colon[0,\infty)\to[0,\infty)$, and that $ \log(M_r) \gtrsim r$ for $r\in\NN$.
Then
\[
\aunc_{m}[\EB,\YY[\XB,\Sym,\sigma]]\lesssim \max\enbrace{ (\log m)^{\max\enbrace{1/s,1-1/t}}, \eta(\log m)}, \quad m\ge 2.
\]

\item\label{it:AADK5} Assume that $(\Lambda_n)_{n=1}^\infty$ has both the LRP and the URP. Then,
\[
\aunc_{m}[\EB,\YY[\XB,\Sym,\sigma]]\approx \unc_{m}[\EB,\YY[\XB,\Sym,\sigma]], \quad m\in\NN.
\]
\end{enumerate}
\end{theorem}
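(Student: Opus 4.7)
The plan is to treat each of the six parts separately, all resting on the fundamental observation that the norm on $\YY[\XB,\Sym,\sigma]$ splits, by construction, into the $Q_\sigma$-piece (a subsymmetric norm on the residual, controlled by $\Sym$) and the $P_\sigma$-piece (a $\XB$-norm on the block-averages $\vv_n^*(f)$, controlled by $\XX$). For \ref{it:AADK1}, I would verify that the unit vectors are semi-normalized by computing the two contributions explicitly, and that the partial sums $S_{\NN[m]}$ are uniformly bounded: on the $Q_\sigma$-component one invokes the Schauder constant of the unit vector system of $\Sym$, while on the $P_\sigma$-component $S_{\NN[m]}$ corresponds, up to a single boundary block $\sigma_r$ that may be cut by $m$, to a partial-sum operator of $\XB$; the boundary term is absorbed using the definition of $\vv_n$ and $\vv_n^*$. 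For \ref{it:AADK2}, the natural map $f\mapsto \bigl(\sum_{n=1}^r \vv_n^*(f)\xx_n,\ Q_\sigma(f)\bigr)$ is a bijection from $\YY^{(M_r)}[\XB,\Sym,\sigma]\oplus \Sym_\sigma^{(r)}$ onto $\XX^{(r)}\oplus\Sym^{(M_r)}$, and the uniform bound on its norm and inverse is immediate from the very definition of $\norm{\cdot}_{\XB,\Sym,\sigma}$.

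For \ref{it:AADK3}, democracy follows by computing $\norm{\Ind_A}_{\XB,\Sym,\sigma}$ for finite $A\subseteq\NN$: choosing $r$ so that $\sigma_r$ is the last block that $A$ meets, the $Q_\sigma$-piece has $\Sym$-norm equivalent to $\Lambda_{\abs{A}}$, while the $P_\sigma$-piece equals $\sum_n (\abs{\sigma_n\cap A}/\abs{\sigma_n})\Lambda_{\abs{\sigma_n}}\xx_n$, whose $\XX$-norm is controlled by $\Lambda_{\abs{A}}$ after invoking URP/LRP of $(\Lambda_n)$ together with $M_r\lesssim M_{r+1}-M_r$ (which makes successive blocks of comparable size). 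Quasi-greediness is obtained through the truncation-quasi-greedy criterion: because $\XB$ and the unit vector system of $\Sym$ are both Schauder bases and the greedy projection $\GG_m$ preserves the splitting modulo one boundary block, one reduces to bounding truncations of monotone sequences, where URP/LRP of $(\Lambda_n)$ yields the required $\ell_\infty$-control. Combining with democracy gives almost greediness via the Dilworth--Kalton--Kutzarova--Temlyakov characterization.

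The main difficulty lies in \ref{it:AADK4} and \ref{it:AADK6}. For the lower bound, given $m$ I would choose $r$ maximal with $M_r\le m$; by $\log M_r\lesssim r$ we have $r\gtrsim \log m$. Pick in $\XX^{(r)}$ a unit vector $g$ and a set $B\subseteq\NN[r]$ witnessing $\aunc_r[\XB,\XX]\gtrsim \eta(r)\gtrsim\eta(\log m)$, transfer $g$ through the isomorphism of \ref{it:AADK2} to an element of $\YY^{(M_r)}[\XB,\Sym,\sigma]$ (hence of support $\subseteq\NN[m]$), and observe that $S_{\cup_{n\in B}\sigma_n}$ on this element recovers the original projection; the doubling of $\eta$ removes the constant. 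For the upper bound, split $S_A(f)=S_A(Q_\sigma f)+S_A(P_\sigma f)$. The first term is a partial projection inside $\Sym^{(m)}$, and $s$-lower/$t$-upper estimates give the $(\log m)^{\max\{1/s,1-1/t\}}$ bound by interpolating between the $r$-dimensional block structure and the support of $f$. The second term is, up to bounded corrections from cut blocks, the image under the isomorphism of a $\XB$-projection on at most $r\lesssim \log m$ coordinates, so it is controlled by $\eta(r)\lesssim \eta(\log m)$; taking the maximum yields \ref{it:AADK6}.

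Finally, \ref{it:AADK5} follows from the standard fact (see, e.g., \cite{AADK2019b}) that for almost greedy bases whose fundamental function has the URP and LRP, the full conditionality constants $\unc_m$ and the restricted ones $\aunc_m$ are equivalent: one approximates an arbitrary finite $A$ by a set contained in an initial segment $\NN[N]$ with $N\approx \abs{A}$ by translating the coordinates of $A\setminus\NN[N]$ to the right using the almost greedy property and democracy, losing only a constant factor. The principal obstacle throughout is paragraph three: keeping the contributions from $Q_\sigma$ (which lives in a large $\Sym$-dimensional space) and from $P_\sigma$ (which carries the pathological conditionality of $\XB$ but on a tiny $r\approx\log m$-dimensional piece) correctly matched, and in particular ensuring that the boundary block $\sigma_r$ --- which is the source of most of the technicalities --- is absorbed by the LRP/URP hypotheses on $(\Lambda_n)$ and on the partition sizes $(M_r)$.
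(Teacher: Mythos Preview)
Your plan for \ref{it:AADK1}--\ref{it:AADK4} matches the paper (which simply cites \cite{AADK2019b} for these), and the decomposition $S_A f = S_A(Q_\sigma f) + S_A(P_\sigma f)$ for \ref{it:AADK6} is the right one. Be warned, though, that neither summand lives in a single component of the DKK norm: in the paper's argument the factors $r^{1/s}$ and $r^{1-1/t}$ arise from the \emph{cross} terms---namely $\norm{Q_\sigma(S_A(P_\sigma f))}_\Sym$ via the lower $s$-estimate, and $\norm{\sum_n \vv_n^*(S_A(Q_\sigma f))\,\xx_n}_\XX$ via the upper $t$-estimate---whereas the diagonal contributions give $\aunc_r[\XB]$ and a bounded constant respectively. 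Your sketch, which treats $S_A(Q_\sigma f)$ as ``a partial projection inside $\Sym^{(m)}$'' and $S_A(P_\sigma f)$ as essentially a $\XB$-projection, misses exactly these cross terms and so would not produce the stated exponents.

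The genuine gap is in \ref{it:AADK5}. You invoke a ``standard fact'' that $\unc_m\approx\aunc_m$ for almost greedy bases whose fundamental function has LRP and URP, but no such general theorem exists; the mechanism you describe (``translating the coordinates of $A\setminus\NN[N]$ to the right'') has no meaning for a Schauder basis, since the operators $S_A$ are not invariant under relabeling of the index set. More fundamentally, part~\ref{it:AADK5} does not assume almost greediness at all---its hypothesis is only LRP and URP of $(\Lambda_n)$, strictly weaker than that of \ref{it:AADK3}, so you cannot appeal to \ref{it:AADK3} here. The paper's proof is specific to the DKK construction: writing $S_A(f)=S_A(S_m(f))+S_B(f)$ with $B=A\cap(m,\infty)$, the first summand is controlled by $\aunc_m$ together with the Schauder constant from \ref{it:AADK1}; for the second, one notes that if $m\in\sigma_r$ then $\abs{B}\le m\le M_r$ and $B\subseteq\bigcup_{n\ge r}\sigma_n$, and then invokes \cite{AADK2019b}*{Lemma~3.16}, a structural estimate (using LRP/URP of $(\Lambda_n)$ and the explicit form of $\norm{\cdot}_{\XB,\Sym,\sigma}$) bounding such tail projections uniformly.
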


\begin{proof}
Parts \ref{it:AADK1}, \ref{it:AADK2}, \ref{it:AADK3} and \ref{it:AADK4} were proved in \cite{AADK2019b} (see Theorem 3.6, Proposition 3.8, Theorem 3.17). To prove \ref{it:AADK6}, we argue as in the proof of \cite{AADK2019b}*{Lemma 3.14} replacing the triangle law with the lower $s$-estimate condition and obtain
\begin{equation}\label{eq:ImpEstimate}
\norm{ Q_\sigma(S_A(f))}
\le
5 \norm{ Q_\sigma(f)}_\Sym
+2 C_s \left(\sum_{n=1}^\infty \frac{\Lambda^s_{\abs{A\cap\sigma_n}}}{\Lambda^s_{\abs{\sigma_n}}} \abs{ \vv_n^*(f)}^s\right)^{1/s}\end{equation}
for all $A\subseteq\NN$ and all $f\in c_{00}$, where $C_s$ is the lower $s$-estimate constant of $\Sym$. Pick $r\in\NN$, $f\in\FF^\NN$ with $\supp(f)\subseteq\cup_{n=1}^r \sigma_n$, and $A\subseteq\NN$. Suppose first that $P_\sigma(f)=f$, so that $Q_\sigma(f)=0$. By \cite{AADK2019b}*{Lemma 3.11},
\[
\abs{\vv_n^*(S_A(f))}\le 2 \abs{\vv_n^*(f)}, \quad 1\le n \le r.
\]
Hence, by convexity,
\[
\norm{ \sum_{n=1}^\infty \vv_n^*(S_A(f))\, \xx_n}_\XX
\le 8 \, \aunc_r[\XB] \,
\norm{ \sum_{n=1}^\infty \vv_n^*(f)\, \xx_n}.
\]
Set $C_*=\sup_n \norm{ \xx_n^*}$. By inequality~\eqref{eq:ImpEstimate},
\[
\norm{ Q_\sigma(S_A(f)) }_\Sym \le 2 C_* C_s r^{1/s} \norm{ \sum_{n=1}^\infty \vv_n^*(f)\, \xx_n}_\XX.
\]
Suppose now that $P_\sigma(f)=0$, so that $Q_\sigma(f)=f$ and $\vv_n^*(f)=0$ for all $n\in\NN$. By inequality~\eqref{eq:ImpEstimate}, $\norm{ Q_\sigma(S_A(f))}_\Sym\le 5 \norm{ f}_\Sym$. In turn, if $C=\sup_n \norm{ \xx_n}$ and $C_t$ is the upper $t$-estimate constant of $\Sym$, using \cite{AADK2019b}*{Lemma 3.11} we obtain
\begin{align*}
\norm{ \sum_{n=1}^\infty \vv_n^*(S_A(f)) \, \xx_n}_\XX
& \le C\sum_{n=1}^\infty \abs{ \vv_n^*(S_A(f))}\\
& \le 2C\sum_{n=1}^\infty \norm{ S_{\sigma_n}(f) }_\Sym\\
&\le 2 C r^{1-1/t} \left(\sum_{n=1}^\infty\norm{ S_{\sigma_n}(f) }_\Sym^t\right)^{1/t}\\
&\le 2 C C_t r^{1-1/t} \norm{ f}_\Sym.
\end{align*}

To complete the proof of \ref{it:AADK6}, we pick $m\in\NN$, and $f\in\FF^\NN$ with $\supp(f)\subseteq\NN[m]$. Choose $r\in\NN$ with $m\in\sigma_n$, and set $f_1=P_\sigma(f)$ and $f_2=Q_\sigma(f)$. We have $\vv_n^*(f_1)=\vv_n^*(f)$ for all $n\in\NN$, and $P_\sigma(f_2)=0$, and $f=f_1+f_2$, and $\supp(f_1)\cup\supp(f_2)\subseteq\cup_{n=1}^r \sigma_n$. Therefore,
\begin{align*}
\norm{ S_A(f) }_{\XB,\Sym,\sigma}
& \le \norm{ S_A(f_1) }_{\XB,\Sym,\sigma}+ \norm{ S_A(f_2) }_{\XB,\Sym,\sigma}\\
& \le \max\enbrace{ 8 \, \aunc_r[\XB] + 2 C_* C_s r^{1/s} , 5+2 C C_t r^{1-1/t} } \norm{ f}_{\XB,\Sym,\sigma}.
\end{align*}
Since there is a constant $D$ such that $r\le D \log m$ for all $m\ge 2$, we are done.

Finally, we tackle the proof \ref{it:AADK5}. Pick $m\in\NN$, and we choose $r\in\NN$ with $m \in \sigma_r$, so that $M_{r-1}<m\le M_r$. Given $A\subseteq \NN$ with $\abs{A}\le m$, set $B= A\cap [m+1,\infty)$. We have $\abs{B} \le M_r$, $B\subseteq \cup_{n = r}^\infty \sigma_n$ and
\[
S_A(f)=S_A(S_m(f))+S_B(f), \quad f\in c_{00}.
\]
By \cite{AADK2019b}*{Lemma 3.16} and the already proved Part~ \ref{it:AADK1}, there is a constant $C$ such that
\[
\max\enbrace{ \norm{ S_m(f)}_{\XB,\Sym,\sigma}, \norm{ S_B(f)}_{\XB,\Sym,\sigma}}
\le C\norm{ f}_{\XB,\Sym,\sigma}, \quad f\in c_{00}.
\]
We deduce that $\unc_m\le C(1+\aunc_m)$.
\end{proof}

\begin{theorem}\label{thm:HCAGBp>1}
Let $1<p<\infty$. For any $a$ with $\max\enbrace{1/p, 1-1/p}\le a<1$, the space $\Ts^{(p)}$ has a conditional almost greedy Schauder basis $\XB_a$ such that:
\begin{itemize}
\item $\unc_m[\XB_a,\Ts^{(p)}]\approx\aunc_m[\XB_a,\Ts^{(p)}]\gtrsim (\log m)^a$ for $m\ge 2$;
\item given $b>a$, $\unc_m[\XB_a,\Ts^{(p)}]\lesssim (\log m)^b$ for $m\ge 2$; and
\item $\udf[\XB_a,\Ts^{(p)}](m) \approx m^{1/p}$ for $m\in\NN$.
\end{itemize}
\end{theorem}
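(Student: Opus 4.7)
The plan is to apply the Dilworth--Kalton--Kutzarova construction of Theorem~\ref{thm:AADKGathered} to a suitably chosen pair $(\XB,\Sym)$ and then glue infinitely many prefixes of the resulting basis into a single $\Ts^{(p)}$-direct sum. First, given $a\in[\max\{1/p,1-1/p\},1)$, invoke Theorem~\ref{thm:GWBasis} to obtain a Schauder basis $\HB_a$ of $\ell_2$ with $\aunc_m[\HB_a,\ell_2]\gtrsim m^a$ and $\unc_m[\HB_a,\ell_2]\lesssim m^b$ for every $b>a$. Plug $\XB=\HB_a$ and $\Sym=\ell_p$ (which is symmetric, has fundamental function $\Lambda_n=n^{1/p}$ with both the LRP and the URP, and satisfies lower and upper $p$-estimates) into Theorem~\ref{thm:AADKGathered}, choosing an ordered partition $\sigma$ of $\NN$ with $\abs{\sigma_n}\approx 2^{n-1}$, so that $M_r:=\sum_{n=1}^r\abs{\sigma_n}\approx 2^r$ satisfies both $M_r\lesssim M_{r+1}-M_r$ and $\log M_r\approx r$. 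The resulting space $\YY:=\YY[\HB_a,\ell_p,\sigma]$ has unit vector basis $\YB$ that is almost greedy with fundamental function $\approx n^{1/p}$ (part~\ref{it:AADK3}), satisfies $\aunc_m[\YB,\YY]\gtrsim(\log m)^a$ (part~\ref{it:AADK4}), and satisfies $\aunc_m[\YB,\YY]\approx\unc_m[\YB,\YY]\lesssim(\log m)^b$ for every $b>a$ (parts~\ref{it:AADK6} and~\ref{it:AADK5}); the hypothesis $a\ge\max\{1/p,1-1/p\}$ is precisely what guarantees that the term $(\log m)^{\max\{1/p,1-1/p\}}$ contributed by part~\ref{it:AADK6} does not dominate.

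Next, for each $j\in\NN$ set $\XX_j:=\YY^{(M_j)}$ with basis $\YB^{(M_j)}$, and form
\[
\XX:=\left(\bigoplus_{j=1}^\infty \XX_j\right)_{\Ts^{(p)}},\qquad \XB_a:=\left(\bigoplus_{j=1}^\infty \YB^{(M_j)}\right)_{\Ts^{(p)}}.
\]
The crux of the argument is to prove $\XX\simeq\Ts^{(p)}$. By part~\ref{it:AADK2}, $\XX_j\oplus\ell_p^j$ is uniformly isomorphic to $\ell_2^j\oplus\ell_p^{M_j}$. Since both $j\mapsto j$ and $j\mapsto M_j\approx 2^j$ are dominated by the fast growing hierarchy, Theorem~\ref{thm:TsirelsonIsomorphism} yields $(\bigoplus_j\ell_p^j)_{\Ts^{(p)}}\simeq\Ts^{(p)}$ and Theorem~\ref{thm:p>1Iso} then gives
\[
\XX\oplus\Ts^{(p)} \simeq \left(\bigoplus_j(\XX_j\oplus\ell_p^j)\right)_{\Ts^{(p)}} \simeq \left(\bigoplus_j(\ell_2^j\oplus\ell_p^{M_j})\right)_{\Ts^{(p)}} \simeq \Ts^{(p)}.
\]
Conversely, the norm-one projection $Q_\sigma$ on $\YY$ restricts to a projection from $\XX_j$ onto the mean-zero-on-each-block subspace, which by the DKK definition of the norm is isometric to $\ell_p^{M_j-j}$; summing along $\Ts^{(p)}$ and invoking Theorem~\ref{thm:TsirelsonIsomorphism} once more (since $j\mapsto M_j-j$ is dominated by the fast growing hierarchy) shows $\Ts^{(p)}\unlhd\XX$. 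Writing $\XX\simeq\Ts^{(p)}\oplus\UB$ for some $\UB$, the identity $\Ts^{(p)}\oplus\Ts^{(p)}\simeq\Ts^{(p)}$ from Theorem~\ref{thm:CSSQ} gives $\XX\simeq\Ts^{(p)}\oplus\UB\simeq\Ts^{(p)}\oplus\Ts^{(p)}\oplus\UB\simeq\Ts^{(p)}\oplus\XX\simeq\Ts^{(p)}$, bypassing any appeal to the (generally false) Banach-space Schr\"oder--Bernstein principle.

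It remains to check that $\XB_a$ has the advertised parameters. The $\Ts^{(p)}$-direct sum of uniformly quasi-greedy bases is quasi-greedy (as recorded in the direct-sum preamble), and Proposition~\ref{prop:DemTsirelson}, applied with $\varphi(j)=M_j$ and the uniformly democratic prefixes $\YB^{(M_j)}$ (whose fundamental functions are $\approx n^{1/p}$ because $\YB$'s is), yields that $\XB_a$ is democratic with fundamental function $\approx m^{1/p}$; hence $\XB_a$ is almost greedy. The $1$-unconditionality of $(\ts_n)$ forces $\unc_m[\XB_a,\XX]\le\unc_m[\YB,\YY]\lesssim(\log m)^b$, while restricting to a vector supported in a single summand $\XX_j$ with $M_j\ge m$ gives $\aunc_m[\XB_a,\XX]\ge\aunc_m[\YB,\YY]\gtrsim(\log m)^a$, and $\aunc_m\approx\unc_m$ descends from the corresponding comparison for $\YB$. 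I expect the main technical obstacle to lie in the middle step: in contrast with the non-locally-convex case of Theorem~\ref{thm:HCAGBple1}, the DKK pieces $\YY^{(M_j)}$ are not uniformly isomorphic to pure $\ell_p^N$ spaces, so Theorem~\ref{thm:TsirelsonIsomorphism} alone does not recover $\Ts^{(p)}$; one must combine the Hilbert--$\ell_p$ mixing of Theorem~\ref{thm:p>1Iso} with the explicit identification of $\ell_p^{M_j-j}\unlhd\XX_j$ via $Q_\sigma$ to close the isomorphism loop.
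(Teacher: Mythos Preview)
Your overall strategy is correct and parallels the paper's: apply the DKK construction (Theorem~\ref{thm:AADKGathered}) to the pair $(\HB_a,\ell_p)$, then assemble prefixes in a $\Ts^{(p)}$-direct sum and identify the result with $\Ts^{(p)}$ via Theorem~\ref{thm:p>1Iso}. The difference lies in how the identification with $\Ts^{(p)}$ is obtained. You form $\XX=(\bigoplus_j \YY^{(M_j)})_{\Ts^{(p)}}$ and then run a Pe\l czy\'nski-type decomposition: first $\XX\oplus\Ts^{(p)}\simeq\Ts^{(p)}$, then $\Ts^{(p)}\unlhd\XX$, then the absorption trick. The paper bypasses this entirely by \emph{building the $\ell_p^j$ piece into each summand from the start}: it takes $\UU=(\bigoplus_j (\YY^{(M_j)}\oplus\ell_p^j))_{\Ts^{(p)}}$ with basis $\UB=(\bigoplus_j(\YB^{(M_j)}\oplus\EB_{\NN[j]}))_{\Ts^{(p)}}$, so that part~\ref{it:AADK2} and Theorem~\ref{thm:p>1Iso} give $\UU\simeq\Ts^{(p)}$ in one step, and the extra unconditional $\ell_p^j$ blocks affect neither the conditionality parameters nor the democracy. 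Your route works but is longer; the paper's trick of padding each summand is what you should take away.

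One point needs correction. The range of $Q_\sigma$ on $\XX_j$, equipped with the $\YY$-norm, is isometric to the mean-zero-on-each-block subspace of $\ell_p^{M_j}$, but for $p\neq 2$ this subspace is \emph{not} isometric to $\ell_p^{M_j-j}$; showing it is even uniformly isomorphic to some $\ell_p^{N_j}$ requires an extra argument. Fortunately your decomposition does not need this: $\Ts^{(p)}\unlhd\XX$ follows trivially by projecting each $\XX_j$ onto the span of its first basis vector (a uniformly bounded rank-one projection, since $\YB$ is a semi-normalized Schauder basis), which already gives $(\bigoplus_j\FF)_{\Ts^{(p)}}=\Ts^{(p)}\unlhd\XX$.
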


\begin{proof}
Let $\sigma=(\sigma_n)_{n=1}^\infty$ be an ordered partition of $\NN$ with $M_r\lesssim M_{r+1}-M_r$ and
$\log(M_r) \lesssim r$ for $r\in\NN$. Since the unit vector system of $\ell_p$ is perfectly homogeneous, the canonical basis $(\vv_n)_{n=1}^\infty$ of $P_\sigma(\ell_p)$ is isometrically equivalent to the unit vector basis of $\ell_p$. Let $\HB$ be the semi-normalized Schauder basis whose existence is guaranteed by Theorem~\ref{thm:GWBasis}. By Theorem~\ref{thm:AADKGathered}, applying the DKK-method with $\sigma$, $\HB$, and the symmetric space $\ell_p$ yields an almost greedy Schauder basis $\YB=(\yy_n)_{n=1}^\infty$ (of some Banach space $\YY$) whose fundamental function and conditionality parameters are as desired. Moreover, there is a constant $C$ such that
\begin{equation}\label{eq:Glue}
\YY_j\oplus \ell_p^j\simeq_C \ell_2^j \oplus \ell_p^{\psi(j)}, \quad j\in\NN,
\end{equation}
where $\YY_j:=[\yy_n \colon 1 \le n\le \psi(j) ]$.
By Proposition~\ref{prop:DemTsirelson}, the natural arrangement of the basis vectors of the system
\[
\UB=\left(\bigoplus_{j=1}^\infty \YB^{(\psi(j))} \oplus \EB_{\NN[m]}\right)_{\Ts^{(p)}}
\]
is an almost greedy Schauder basis of
$
\UU=\left(\bigoplus_{j=1}^\infty \YY_j \oplus \ell_p^{j}\right)_{\Ts^{(p)}}
$
whose fundamental function and conditionality parameters are as desired. Since, by \eqref{eq:Glue} and Theorem~\ref{thm:p>1Iso},
\[
\UU
\simeq
\left(\bigoplus_{j=1}^\infty \ell_2^j \oplus \ell_p^{\psi(j)} \right)_{\Ts^{(p)}}
\simeq \Ts^{(p)},
\]
we are done.
\end{proof}

Given an almost greedy conditional basis $\XB=(\xx_n)_{n=1}^\infty$ and a permutation $\pi$ of $\NN$, the rearranged sequence $(\xx_{\pi(n)})_{n=1}^\infty$ is an almost greedy basis which needs not be equivalent to $\XB$. Similarly, if $(a_n)_{n=1}^\infty$ is a semi-normalized sequence in $\FF$, the `perturbed' sequence $(a_n\xx_n)_{n=1}^\infty$ is an almost greedy basis which is not necessarily equivalent to $\XB$. Thus, to study the structure of almost greedy bases of a given quasi-Banach space, we must consider equivalence up to permutation
and perturbation.

\begin{corollary}
The space $\Ts^{(p)}$, $0<p<\infty$, has uncountably many non-equivalent conditional almost greedy bases $(\XB_j)_{j\in J}$. To be precise, if $i\not=j$ no perturbation of $\XB_i$ is equivalent to a permutation of $\XB_j$.
\end{corollary}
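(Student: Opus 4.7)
My plan hinges on the fact that the conditionality parameter sequence $(\unc_m[\XB,\XX])_{m=1}^\infty$ is an invariant, up to multiplicative constants, of the equivalence class of $\XB$ under the equivalence relation generated by linear equivalence of bases, permutations, and perturbations by semi-normalized scalar sequences. To verify this, I would first check that if $\yy_n=a_n\xx_{\pi(n)}$ for a permutation $\pi$ of $\NN$ and scalars $(a_n)_n$ with $0<\inf_n\abs{a_n}\le\sup_n\abs{a_n}<\infty$, then $\yy_n^*(\cdot)\yy_n=\xx_{\pi(n)}^*(\cdot)\xx_{\pi(n)}$, so that the partial projections $S_A$ associated to the two systems differ only by a reindexing and $\unc_m[(\yy_n)_n]=\unc_m[\XB]$. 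Combined with the standard observation that an isomorphism $T\colon\XX\to\YY$ sending one basis to another satisfies $\norm{S_A^\YB}\le\norm{T}\,\norm{T^{-1}}\,\norm{S_A^\XB}$, this reduces the problem to producing an uncountable family of bases of $\Ts^{(p)}$ whose conditionality sequences are pairwise non-equivalent.

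For $0<p\le 1$ I would apply Theorem~\ref{thm:HCAGBple1} to the one-parameter family of concave increasing functions $\eta_c(t)=t^c$, $c\in(0,1]$, obtaining conditional almost greedy bases $\XB_c$ of $\Ts^{(p)}$ with $\unc_m[\XB_c,\Ts^{(p)}]\approx(\log m)^{c/p}$ for $m\ge 2$. Whenever $c_1<c_2$ lie in $(0,1]$ the ratio $(\log m)^{(c_1-c_2)/p}$ tends to zero as $m\to\infty$, so the conditionality sequences are not equivalent and, by the preceding invariance, no perturbation of $\XB_{c_1}$ is permutatively equivalent to $\XB_{c_2}$. The uncountability of $(0,1]$ closes this case.

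For $1<p<\infty$ I would feed the parameter $a$ of Theorem~\ref{thm:HCAGBp>1} through the nondegenerate interval $[\max\enbrace{1/p,1-1/p},1)$, producing bases $\XB_a$ with $\unc_m[\XB_a,\Ts^{(p)}]\gtrsim(\log m)^a$ and, simultaneously, $\unc_m[\XB_a,\Ts^{(p)}]\lesssim(\log m)^b$ for every $b>a$. Given $a_1<a_2$ in this interval I would pick $b$ with $a_1<b<a_2$; then $\unc_m[\XB_{a_1},\Ts^{(p)}]/\unc_m[\XB_{a_2},\Ts^{(p)}]\lesssim(\log m)^{b-a_2}\to 0$ as $m\to\infty$, ruling out any equivalence of the conditionality sequences and hence, by invariance, ruling out equivalence up to permutation and perturbation. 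Since the parameter set has the cardinality of the continuum, this finishes the case $1<p<\infty$. The only non-mechanical step is the verification of invariance of $\unc_m$ under permutation and perturbation, which reduces to the one-line calculation $\yy_n^*(\cdot)\yy_n=\xx_n^*(\cdot)\xx_n$ sketched above, so I anticipate no serious obstacle; all the hard work has already been done inside Theorems~\ref{thm:HCAGBple1} and \ref{thm:HCAGBp>1}.
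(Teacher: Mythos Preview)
Your proposal is correct and follows exactly the paper's approach: use the invariance of $(\unc_m)_{m=1}^\infty$ under permutation and perturbation, then invoke Theorem~\ref{thm:HCAGBple1} (with $\eta_c(t)=t^c$) for $0<p\le 1$ and Theorem~\ref{thm:HCAGBp>1} for $1<p<\infty$ to manufacture bases with pairwise non-comparable conditionality growth. The paper's own proof is a two-line sketch of precisely this argument, so your version is simply a more detailed rendering of the same idea.
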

\begin{proof}
Notice that if a perturbation of a basis $\XB$ is equivalent to a permutation of a basis $\YB$, then $\unc_m[\XB]\approx \unc_m[\YB]$ for $m\in\NN$. Thus, the result follows from Theorem~\ref{thm:HCAGBple1} in the case when $0<p<1$, and from Theorem~\ref{thm:HCAGBp>1} in the case when $1<p<\infty$.
\end{proof}


\begin{bibdiv}
\begin{biblist}

\bib{AABBL2021}{article}{
      author={Albiac, Fernando},
      author={Ansorena, Jos\'{e}~L.},
      author={Berasategui, Miguel},
      author={Bern\'{a}, Pablo~M.},
      author={Lassalle, Silvia},
       title={Bidemocratic bases and their connections with other greedy-type
  bases},
        date={2021},
     journal={arXiv e-prints},
      eprint={2105.15177},
}

\bib{AAB2021}{article}{
      author={Albiac, Fernando},
      author={Ansorena, Jos\'{e}~L.},
      author={Bern\'{a}, Pablo~M.},
       title={New parameters and {L}ebesgue-type estimates in greedy
  approximation},
        date={2021},
     journal={arXiv e-prints},
      eprint={2104.10912},
}

\bib{AABW2021}{article}{
      author={Albiac, Fernando},
      author={Ansorena, Jos\'{e}~L.},
      author={Bern\'{a}, Pablo~M.},
      author={Wojtaszczyk, Przemys{\l}aw},
       title={Greedy approximation for biorthogonal systems in quasi-{B}anach
  spaces},
        date={2021},
     journal={Dissertationes Math. (Rozprawy Mat.)},
      volume={560},
       pages={1\ndash 88},
}

\bib{AADK2019b}{article}{
      author={Albiac, Fernando},
      author={Ansorena, Jos\'{e}~L.},
      author={Dilworth, Stephen~J.},
      author={Kutzarova, Denka},
       title={Building highly conditional almost greedy and quasi-greedy bases
  in {B}anach spaces},
        date={2019},
        ISSN={0022-1236},
     journal={J. Funct. Anal.},
      volume={276},
      number={6},
       pages={1893\ndash 1924},
         url={https://doi-org/10.1016/j.jfa.2018.08.015},
      review={\MR{3912795}},
}

\bib{AAGHR2015}{article}{
      author={Albiac, Fernando},
      author={Ansorena, Jos\'e~L.},
      author={Garrig\'{o}s, Gustavo},
      author={Hern\'{a}ndez, Eugenio},
      author={Raja, Mat\'{\i}as},
       title={Conditionality constants of quasi-greedy bases in super-reflexive
  {B}anach spaces},
        date={2015},
        ISSN={0039-3223},
     journal={Studia Math.},
      volume={227},
      number={2},
       pages={133\ndash 140},
         url={https://doi-org/10.4064/sm227-2-3},
      review={\MR{3397274}},
}

\bib{AAW2021b}{article}{
      author={Albiac, Fernando},
      author={Ansorena, Jos\'{e}~L.},
      author={Wojtaszczyk, Przemys{\l}aw},
       title={On certain subspaces of {$\ell_p$} for {$0<p\leq1$} and their
  applications to conditional quasi-greedy bases in {$p$}-{B}anach spaces},
        date={2021},
        ISSN={0025-5831},
     journal={Math. Ann.},
      volume={379},
      number={1-2},
       pages={465\ndash 502},
         url={https://doi-org/10.1007/s00208-020-02069-3},
      review={\MR{4211094}},
}

\bib{AlbiacKalton2016}{book}{
      author={Albiac, Fernando},
      author={Kalton, Nigel~J.},
       title={Topics in {B}anach space theory},
     edition={Second Edition},
      series={Graduate Texts in Mathematics},
   publisher={Springer, [Cham]},
        date={2016},
      volume={233},
        ISBN={978-3-319-31555-3; 978-3-319-31557-7},
         url={https://doi.org/10.1007/978-3-319-31557-7},
        note={With a foreword by Gilles Godefroy},
      review={\MR{3526021}},
}

\bib{Babenko1948}{article}{
      author={Babenko, Konstantin~I.},
       title={On conjugate functions},
        date={1948},
     journal={Doklady Akad. Nauk SSSR (N. S.)},
      volume={62},
       pages={157\ndash 160},
      review={\MR{0027093}},
}

\bib{Banach1932}{book}{
      author={Banach, Stefan},
       title={Th\'{e}orie des op\'{e}rations lin\'{e}aires},
   publisher={\'{E}ditions Jacques Gabay, Sceaux},
        date={1993},
        ISBN={2-87647-148-5},
        note={Reprint of the 1932 original},
      review={\MR{1357166}},
}

\bib{BCLT1985}{article}{
      author={Bourgain, Jean},
      author={Casazza, Peter~G.},
      author={Lindenstrauss, Joram},
      author={Tzafriri, Lior},
       title={Banach spaces with a unique unconditional basis, up to
  permutation},
        date={1985},
        ISSN={0065-9266},
     journal={Mem. Amer. Math. Soc.},
      volume={54},
      number={322},
       pages={iv+111},
         url={https://doi-org/10.1090/memo/0322},
      review={\MR{782647}},
}

\bib{CasKal1998}{article}{
      author={Casazza, Peter~G.},
      author={Kalton, Nigel~J.},
       title={Uniqueness of unconditional bases in {B}anach spaces},
        date={1998},
        ISSN={0021-2172},
     journal={Israel J. Math.},
      volume={103},
       pages={141\ndash 175},
         url={https://doi-org/10.1007/BF02762272},
      review={\MR{1613564}},
}

\bib{CasShu1989}{book}{
      author={Casazza, Peter~G.},
      author={Shura, Thaddeus~J.},
       title={Tsirelson's space},
      series={Lecture Notes in Mathematics},
   publisher={Springer-Verlag, Berlin},
        date={1989},
      volume={1363},
        ISBN={3-540-50678-0},
         url={https://doi-org/10.1007/BFb0085267},
        note={With an appendix by J. Baker, O. Slotterbeck and R. Aron},
      review={\MR{981801}},
}

\bib{DHK2006}{article}{
      author={Dilworth, Stephen~J.},
      author={Hoffmann, Mark},
      author={Kutzarova, Denka},
       title={Non-equivalent greedy and almost greedy bases in {$l_p$}},
        date={2006},
        ISSN={0972-6802},
     journal={J. Funct. Spaces Appl.},
      volume={4},
      number={1},
       pages={25\ndash 42},
         url={https://doi-org/10.1155/2006/368648},
      review={\MR{2194634}},
}

\bib{DKK2003}{article}{
      author={Dilworth, Stephen~J.},
      author={Kalton, Nigel~J.},
      author={Kutzarova, Denka},
       title={On the existence of almost greedy bases in {B}anach spaces},
        date={2003},
        ISSN={0039-3223},
     journal={Studia Math.},
      volume={159},
      number={1},
       pages={67\ndash 101},
         url={https://doi.org/10.4064/sm159-1-4},
        note={Dedicated to Professor Aleksander Pe{\l}czy\'nski on the occasion
  of his 70th birthday},
      review={\MR{2030904}},
}

\bib{DKKT2003}{article}{
      author={Dilworth, Stephen~J.},
      author={Kalton, Nigel~J.},
      author={Kutzarova, Denka},
      author={Temlyakov, Vladimir~N.},
       title={The thresholding greedy algorithm, greedy bases, and duality},
        date={2003},
        ISSN={0176-4276},
     journal={Constr. Approx.},
      volume={19},
      number={4},
       pages={575\ndash 597},
         url={https://doi-org/10.1007/s00365-002-0525-y},
      review={\MR{1998906}},
}

\bib{FigielJohnson1974}{article}{
      author={Figiel, Tadeusz},
      author={Johnson, William~B.},
       title={A uniformly convex {B}anach space which contains no {$l_{p}$}},
        date={1974},
        ISSN={0010-437X},
     journal={Compositio Math.},
      volume={29},
       pages={179\ndash 190},
      review={\MR{355537}},
}

\bib{GW2014}{article}{
      author={Garrig\'os, Gustavo},
      author={Wojtaszczyk, Przemys{\l}aw},
       title={Conditional quasi-greedy bases in {H}ilbert and {B}anach spaces},
        date={2014},
     journal={Indiana Univ. Math. J.},
      volume={63},
      number={4},
       pages={1017\ndash 1036},
}

\bib{GowersMaurey1997}{article}{
      author={Gowers, William~T.},
      author={Maurey, Bernard},
       title={{B}anach spaces with small spaces of operators},
        date={1997},
        ISSN={0025-5831},
     journal={Math. Ann.},
      volume={307},
      number={4},
       pages={543\ndash 568},
         url={https://doi-org/10.1007/s002080050050},
      review={\MR{1464131}},
}

\bib{KoTe1999}{article}{
      author={Konyagin, Sergei~V.},
      author={Temlyakov, Vladimir~N.},
       title={A remark on greedy approximation in {B}anach spaces},
        date={1999},
        ISSN={1310-6236},
     journal={East J. Approx.},
      volume={5},
      number={3},
       pages={365\ndash 379},
      review={\MR{1716087}},
}

\bib{LinTza1979}{book}{
      author={Lindenstrauss, Joram},
      author={Tzafriri, Lior},
       title={Classical {B}anach spaces. {II} -- function spaces},
      series={Ergebnisse der Mathematik und ihrer Grenzgebiete [Results in
  Mathematics and Related Areas]},
   publisher={Springer-Verlag, Berlin-New York},
        date={1979},
      volume={97},
        ISBN={3-540-08888-1},
      review={\MR{540367}},
}

\bib{Muller1988}{article}{
      author={M{\"u}ller, Paul F.~X.},
       title={A local version of a result of {G}amlen and {G}audet},
        date={1988},
        ISSN={0021-2172},
     journal={Israel J. Math.},
      volume={63},
      number={2},
       pages={212\ndash 222},
         url={https://doi-org/10.1007/BF02765039},
      review={\MR{968539}},
}

\bib{Pel1960}{article}{
      author={Pe{\l}czy\'{n}ski, Aleksander},
       title={Projections in certain {B}anach spaces},
        date={1960},
        ISSN={0039-3223},
     journal={Studia Math.},
      volume={19},
       pages={209\ndash 228},
         url={https://doi-org/10.4064/sm-19-2-209-228},
      review={\MR{126145}},
}

\bib{PelSin1964}{article}{
      author={Pe{\l}czy\'{n}ski, Aleksander},
      author={Singer, Ivan},
       title={On non-equivalent bases and conditional bases in {B}anach
  spaces},
        date={1964/65},
        ISSN={0039-3223},
     journal={Studia Math.},
      volume={25},
       pages={5\ndash 25},
         url={https://doi-org/10.4064/sm-25-1-5-25},
      review={\MR{179583}},
}

\bib{Smorynski1985}{incollection}{
      author={Smory\'{n}ski, Craig},
       title={``{B}ig'' news from {A}rchimedes to {F}riedman [{N}otices {A}mer.
  {M}ath. {S}oc. {\bf 30} (1983), no. 3, 251--256]},
        date={1985},
   booktitle={Harvey {F}riedman's research on the foundations of mathematics},
      series={Stud. Logic Found. Math.},
      volume={117},
   publisher={North-Holland, Amsterdam},
       pages={353\ndash 366},
         url={https://doi-org/10.1016/S0049-237X(09)70168-5},
      review={\MR{835267}},
}

\bib{Temlyakov1998}{article}{
      author={Temlyakov, Vladimir~N.},
       title={The best {$m$}-term approximation and greedy algorithms},
        date={1998},
        ISSN={1019-7168},
     journal={Adv. Comput. Math.},
      volume={8},
      number={3},
       pages={249\ndash 265},
         url={https://doi.org/10.1023/A:1018900431309},
      review={\MR{1628182}},
}

\bib{TemYangYe2011b}{article}{
      author={Temlyakov, Vladimir~N.},
      author={Yang, Mingrui},
      author={Ye, Peixin},
       title={Greedy approximation with regard to non-greedy bases},
        date={2011},
        ISSN={1019-7168},
     journal={Adv. Comput. Math.},
      volume={34},
      number={3},
       pages={319\ndash 337},
         url={https://doi.org/10.1007/s10444-010-9155-2},
      review={\MR{2776447}},
}

\bib{Tsirelson1974}{article}{
      author={Tsirelson, Boris~S.},
       title={It is impossible to imbed {$\ell_{p}$} or {$c_{0}$} into an
  arbitrary {B}anach space},
        date={1974},
        ISSN={0374-1990},
     journal={Funkcional. Anal. i Prilo\v{z}en.},
      volume={8},
      number={2},
       pages={57\ndash 60},
      review={\MR{0350378}},
}

\bib{Woj1997}{article}{
      author={Wojtaszczyk, Przemys{\l}aw},
       title={Uniqueness of unconditional bases in quasi-{B}anach spaces with
  applications to {H}ardy spaces. {II}},
        date={1997},
        ISSN={0021-2172},
     journal={Israel J. Math.},
      volume={97},
       pages={253\ndash 280},
         url={https://doi-org/10.1007/BF02774040},
      review={\MR{1441252}},
}

\bib{Woj2000}{article}{
      author={Wojtaszczyk, Przemys{\l}aw},
       title={Greedy algorithm for general biorthogonal systems},
        date={2000},
        ISSN={0021-9045},
     journal={J. Approx. Theory},
      volume={107},
      number={2},
       pages={293\ndash 314},
         url={https://doi-org/10.1006/jath.2000.3512},
      review={\MR{1806955}},
}

\bib{Wojtowicz1988}{article}{
      author={W\'{o}jtowicz, Marek},
       title={On the permutative equivalence of unconditional bases in
  {$F$}-spaces},
        date={1988},
        ISSN={0208-6573},
     journal={Funct. Approx. Comment. Math.},
      volume={16},
       pages={51\ndash 54},
      review={\MR{965366}},
}

\end{biblist}
\end{bibdiv}

\end{document}